\title{\bf OD-Characterization of Certain Four Dimensional Linear Groups with  Related Results
Concerning Degree Patterns\thanks{This work has been supported by
{\bf RIFS}.}}
\author{ {\bf B. Akbari} and {\bf A. R. Moghaddamfar}\\[0.1cm]
{\em Department of Mathematics, K. N. Toosi
University of Technology,}\\
 {\em P. O. Box $16315$-$1618$, Tehran, Iran}\\[0.1cm]
 {\em and} \\[0.1cm]
{\em Research Institute for Fundamental Sciences (RIFS), Tabriz, Iran}\\[0.1cm]
{\em E-mails}: {\tt moghadam@kntu.ac.ir} and {\tt
moghadam@ipm.ir}}
\newenvironment{proof}{\noindent {\em {Proof}}.}{$\square$
\medskip}
\newtheorem{definition}{Definition}[section]
\newtheorem{theorem}{Theorem}[section]
\newtheorem{proposition}{Proposition}[section]
\newtheorem{lm}{Lemma}[section]
\newtheorem{problem}{Open Problem}[section]
\begin{document}
\maketitle
\begin{abstract}
\noindent The prime graph of a finite group $G$, which is denoted
by ${\rm GK}(G)$, is a simple graph whose vertex set is comprised
of the prime divisors of $|G|$ and two distinct prime divisors $p$
and $q$ are joined by an edge if and only if there exists an
element of order $pq$ in $G$. Let $p_1<p_2<\cdots<p_k$ be all
prime divisors of $|G|$. Then the degree pattern of $G$ is
defined as ${\rm D}(G)=(\deg_G(p_1), \deg_G(p_2), \ldots,
\deg_G(p_k))$, where $\deg_G(p)$ signifies the degree of the
vertex $p$ in ${\rm GK}(G)$. A finite group $H$ is said to be
OD-characterizable if $G\cong H$ for every finite group $G$ such
that $|G|=|H|$ and ${\rm D}(G)={\rm D}(H)$. The purpose of this
article is threefold. First, it finds sharp upper and lower
bounds on $\vartheta(G)$, the sum of degrees of all vertices in
${\rm GK}(G)$, for any finite group $G$ (Theorem \ref{p2}).
Second, it provides the degree of vertices $2$ and the
characteristic $p$ of the base field of any finite simple group
of Lie type in their prime graphs (Propositions
\ref{prop-Clasic-1}-\ref{suzuki}). Third, it proves the linear
groups $L_4(19)$, $L_4(23)$, $L_4(27)$, $L_4(29)$, $L_4(31)$,
$L_4(32)$ and $L_4(37)$
are OD-characterizable (Theorem \ref{main-2}).\\[0.3cm]
{\bf Keywords}: prime graph, degree pattern, simple group.
\end{abstract}
\renewcommand{\baselinestretch}{1.1}
\def\thefootnote{ \ }
\footnotetext{{\em $2010$ Mathematics Subject Classification}:
20D05, 20D06, 20D08.}
\section{Introduction}
All the groups under consideration are finite and simple groups
are non-abelian. For a natural number $n$, we denote by $\pi(n)$
the set of prime divisors of $n$ and put $\pi(G)=\pi(|G|)$. The
{\em spectrum} ${\omega}(G)$ of a group $G$ is the set of orders
of all elements in $G$. The set ${\omega}(G)$ determines the
prime graph (or Gruenberg-Kegel graph) ${\rm GK}(G)$ whose vertex
set is $\pi(G)$ and two vertices $p$ and $q$ are {\em adjacent}
if and only if $pq\in {\omega}(G)$. Denote by $s=s(G)$ the number
of connected components of ${\rm GK}(G)$ and by $\pi_i(G)=\pi_i$
$(i= 1, 2, \ldots, s)$, its $i$th connected component. When $G$
has even order we assume that $\pi_1$ is the connected component
containing the prime 2. Denote by $\mu(G)$ the set of numbers in
${\omega}(G)$ that are maximal with respect to divisibility
relation. Observe that $\omega(G)$ is uniquely restored from
$\mu(G)$. Denote by $\omega_i(G)$, the set consisting of $n\in
\omega(G)$ such that every prime divisor of $n$ lies in $\pi_i$.
The connected components of all finite simple groups are obtained
in \cite{w} and \cite{k}.

The {\em degree ${\rm deg}_G(p)$ of a vertex} $p\in \pi(G)$ is
the number of edges incident on $p$. When there is no ambiguity,
we will use ${\rm deg}(p)$ instead of ${\rm deg}_G(p)$. Assume
that $\pi(G)=\{p_1, p_2, \ldots, p_k\}$ with
$p_1<p_2<\cdots<p_k$. We put
\[ {\rm D}(G):=\big({\rm deg}_G(p_1), {\rm deg}_G(p_2),\ldots, {\rm deg}_G(p_k)\big), \]
which is called the {\em degree pattern of $G$}, and
$$\vartheta(G)=\sum_{i=1}^k\deg_G(p_i).$$
Given a finite group $M$, denote by $h_{\rm OD}(M)$ the number of
isomorphism classes of finite groups $G$ such that $|G|=|M|$ and
${\rm D}(G)={\rm D}(M)$. It is clear that $1\leqslant h_{\rm
OD}(M)<\infty$ for any finite group $M$. In terms of function
$h_{\rm OD}$, the groups $M$ are classified as follows:

\begin{definition} A finite group $M$ is called a {\em $k$-fold {\rm
OD}-characterizable} group if $h_{\rm OD}(M)=k$. Usually, a
$1$-fold OD-characterizable group is simply called a {\em {\rm
OD}-characterizable} group.
\end{definition}

In this article, we first derive sharp upper and lower bounds on
$\vartheta(G)$ for any finite group $G$. We also present some
examples which illustrate our results (Section 2). We also
determine $\deg(2)$ and $\deg(p)$ in the prime graph associated
with any finite simple group of Lie type over a field of
characteristic $p$ (Section 3). In Section 4, we will focus our
attention on the OD-characterizability of the projective special
linear groups $L_4(q)$ for certain $q$. Previously, it was proved
in \cite{bakbari}, \cite{amr}, \cite{mz1} and \cite{ls} that the
projective special linear groups $L_4(2)$, $L_4(3)$, $L_4(4)$,
$L_4(5)$, $L_4(7)$, $L_4(8)$, $L_4(9)$, $L_4(11)$, $L_4(13)$,
$L_4(16)$ and $L_4(17)$ are OD-characterizable. In this article,
we will prove that the simple groups $L_4(19)$, $L_4(23)$,
$L_4(27)$, $L_4(29)$, $L_4(31)$, $L_4(32)$ and $L_4(37)$ are
characterizable through their orders and degree patterns. Finally,
we will prove the following:\\[0.3cm]
{\bf Theorem A.} {\em   The projective special linear groups
$L_4(19)$, $L_4(23)$, $L_4(27)$, $L_4(29)$, $L_4(31)$, $L_4(32)$
and $L_4(37)$
are OD-characterizable.}\\[0.3cm]
\indent From this theorem the following corollary is derived.\\[0.1cm]
{\bf Corollary A.} {\em The projective special linear groups
$L_4(q)$, with $2\leqslant q<40$, are OD-characterizable.}\\[0.1cm]

Finally, a list of finite almost simple groups which are
presently known to be OD-characterizable or $k$-fold
OD-characterizable for $k\geq 2$ is given in Tables 3-4 under
Section 5.

{\em Notation and Terminology.} Given a connected graph
$\Gamma=(V, E)$, a {\em spanning tree} of $\Gamma$ is a connected
subgraph $T=(V, \tilde{E})$ of $\Gamma$ such that
$|\tilde{E}|=|V|-1$. A {\em clique} in a graph is a set of
pairwise adjacent vertices. A simple graph, in which every pair of
distinct vertices are adjacent is called a complete graph. We will
denote by $C_n$ the cycle on $n$ vertices and by $K_n$ the
complete graph on $n$ vertices.

Let $\Gamma=(V, E)$ be a simple graph. For any vertex $v\in V$,
the degree of $v$, $\deg_\Gamma(v)$, is the number of edges
containing $v$. The {\em degree sequence} of a graph is the
sequence of the degrees of its vertices, sometimes written in a
non-decreasing order, as $D(\Gamma): d_1\leqslant d_2\leqslant
\cdots \leqslant d_n$. A sequence of non-negative integers $(a_1,
a_2, \ldots, a_k)$ is said to be {\em majorized} by another such
sequence $(b_1, b_2, \ldots, b_k)$ if $a_i\leqslant b_i$ for
$1\leqslant i\leqslant k$. A graph $\Gamma_1$ is {\em
degree-majorized} by a graph $\Gamma_2$ if
$V(\Gamma_1)=V(\Gamma_2)$ and the non-decreasing degree sequence
of $\Gamma_1$ is majorized by that of $\Gamma_2$. For example,
$C_4$ is degree-majorized by $K_4$ because $(2, 2, 2, 2)$ is
majorized by $(3, 3, 3, 3)$. For a graph $\Gamma$ with vertex set
$V$, we set $\vartheta(\Gamma):=\sum_{v\in V}\deg_\Gamma(v)$. In
particular, for convenience, we will denote $\vartheta({\rm
GK}(G))$ as $\vartheta(G)$. The union of simple graphs
$\Gamma_1=(V_1, E_1)$ and $\Gamma_2=(V_2, E_2)$ is the graph
$\Gamma_1\cup\Gamma_2$ with vertex set $V_1\cup V_2$ and edge set
$E_1\cup E_2$. If $\Gamma_1$ and $\Gamma_2$ are disjoint (we
recall that two graphs are disjoint if they have no vertex in
common), we refer to their union as a disjoint union, and
generally denote it by $\Gamma_1\oplus\Gamma_2$. If $U\subseteq
V$ is any set of vertices, then $\Gamma[U]$ denotes the induced
subgraph of $\Gamma$ whose vertex set is $U$ and whose edges are
precisely the edges of $\Gamma$ which have both ends in $U$. A
set of vertices $I\subseteq V$ is said to be an independent set
of $\Gamma$ if no two vertices in $I$ are adjacent in $\Gamma$.
The independence number of $\Gamma$, denoted by $\alpha(\Gamma)$,
is the maximum cardinality of an independent set among all
independent sets of $\Gamma$. Given a group $G$, for convenience,
we will denote $\alpha({\rm GK}(G))$ as $t(G)$. Moreover, for a
vertex $r\in \pi(G)$, let $t(r,G)$ denote the maximal number of
vertices in independent sets of ${\rm GK}(G)$ containing $r$. For
a natural number $m$, the alternating and symmetric group of
degree $m$ denoted by $\mathbb{A}_m$ and $\mathbb{S}_m$,
respectively. Also, we will denote by $l_m$ the largest prime not
exceeding $m$. Given a prime $p$, denote by $m_p$ the $p$-part of
$m$, that is the largest power of $p$ diving $m$. For finite
simple groups, we borrow the notation from \cite{atlas}. We
denote by $H:K$ (resp. $H\cdot K$) a split extension (resp. a
non-split extension) of a normal subgroup $H$ by another subgroup
$K$. Note that, split extensions are the same as semi-direct
products.

If $a$ is a natural number, $r$ is an odd prime, and $(a, r)=1$,
then $e(r, a)$ denotes a multiplicative order of $a$ modulo $r$,
i.e., a minimal natural number $m$ with $a^m \equiv 1\pmod{r}$.
In the case when $a$ is an odd number, we set
$$ e(2, a)=\left\{\begin{array}{llll} 1 & & \mbox{if}& a \equiv 1\pmod{4},\\[0.1cm]
2 & & \mbox{if}&a \not\equiv 1\pmod{4}.
\end{array}\right.$$

We define two functions $\nu$ and $\eta$ on $\mathbb{N}$ as
follows:
$$\nu(m)=\left\{ \begin{array}{lll} m & \mbox{if} & m\equiv 0\!\!\!\pmod{4},
\\[0.1cm]
\frac{m}{2} & \mbox{if} & m\equiv 2\!\!\!\pmod{4},
\\[0.1cm]
2m& \mbox{if} & m \equiv 1\!\!\!\pmod{2},\\ \end{array} \right. \
\ \ \ \ \ {\rm and} \ \ \ \ \ \eta(m)=\left\{ \begin{array}{lll}
m & \mbox{if} & m\equiv 1\!\!\!\pmod{2},
\\[0.1cm]
\frac{m}{2}& \mbox{if} & m \equiv 0\!\!\!\pmod{2}.\\ \end{array}
\right.
$$ Given a prime $p$, we write $p^n\parallel m$ to indicate
that $p^n$ is the highest power of $p$ dividing $m$, in other
words, $p^n$ divides $n$ but $p^{n+1}$ does not. All further
unexplained notation is standard and can be found in \cite{atlas}
or \cite{rose}, for instance.
\section{Some Properties of Degree Pattern}
In \cite{suz2}, Suzuki studied the structure of a prime graph
associated with  a finite simple group, and proved the following
interesting result. It is worth stating that to prove this result
he does not use the classification of finite simple groups.
\begin{lm}[Suzuki] (see \cite[Theorem B]{suz2})\label{prop1}   Let $G$ be a
finite simple group whose prime graph ${\rm GK}(G)$ is
disconnected and let $\Delta$ be a connected component of ${\rm
GK}(G)$ whose vertex set does not contain $2$. Then $\Delta$ is a
clique.
\end{lm}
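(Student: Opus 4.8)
The plan is to reduce the whole statement to a single structural assertion: that the vertex set $\pi := \pi(\Delta)$, which by hypothesis consists entirely of odd primes, supports a \emph{nilpotent} Hall $\pi$-subgroup of $G$. Completeness of $\Delta$ then comes for free. Indeed, if $H$ is a nilpotent Hall $\pi$-subgroup, then $H$ is the direct product of its Sylow subgroups, $H=\prod_{r\in\pi}P_r$, so for any two primes $p,q\in\pi$ one may choose nontrivial commuting elements $x\in P_p$ and $y\in P_q$, whence $xy$ has order $pq$. Thus every pair of primes in $\pi$ is joined by an edge and $\Delta$ is a clique. So it suffices to produce such a nilpotent $\pi$-Hall subgroup, and the combinatorial content of the lemma evaporates into a group-theoretic one.

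The first genuine step is to extract the ``isolation'' of $\pi$ that is forced by disconnectedness. Because $\Delta$ is a full connected component, no prime of $\pi$ is adjacent to any prime outside $\pi$. Reading this through centralizers: if $x$ is an element of prime order $r\in\pi$ and some prime $s\notin\pi$ divided $|C_G(x)|$, then $C_G(x)$ would contain an element $y$ of order $s$ commuting with $x$, and $xy$ would have order $rs$; this would join $r$ to $s$, contradicting that they lie in distinct components. Hence the centralizer in $G$ of every nontrivial $\pi$-element is a $\pi$-group. This is exactly the ``small centralizer'' condition that makes the $\pi$-elements behave like the nontrivial elements of a trivial-intersection subgroup with tightly controlled normalizer, and it is the input around which the rest of the argument is organized.

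The hard part, and the technical heart of Suzuki's approach, is to pass from this local centralizer condition to the global conclusion that the $\pi$-elements fill out one nilpotent Hall $\pi$-subgroup, \emph{without} invoking the classification of finite simple groups. This is where the Feit--Thompson--Bender--Glauberman machinery is indispensable: one argues inside a minimal counterexample and deploys signalizer functor theory together with the Bender method and Glauberman's solvable-signalizer and $ZJ$-type results, all of which are applicable precisely because $\pi$ omits $2$ and the relevant centralizers are $\pi$-groups of odd order. The main obstacle is ruling out a non-nilpotent $\pi$-structure: one must show that the subgroup generated by the $\pi$-elements cannot support a Frobenius-like action internal to $\pi$, since such an action would leave two primes of $\pi$ non-adjacent and thereby obstruct the nilpotency that forces the clique. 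Everything short of this dichotomy is bookkeeping; the genuine difficulty is that establishing nilpotency rests on the full strength of the odd-order machinery rather than on any brief manipulation of the graph itself.
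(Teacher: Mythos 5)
Your reduction is sound as far as it goes: if $G$ has a nilpotent Hall $\pi$-subgroup $H$, where $\pi$ is the vertex set of $\Delta$, then writing $H$ as the direct product of its Sylow subgroups produces commuting elements of orders $p$ and $q$ for every pair $p,q\in\pi$, so $\Delta$ is a clique; and disconnectedness does yield, via the Cauchy/commuting-element argument you give, that the centralizer of every nontrivial $\pi$-element is a $\pi$-group. Both steps are correct, and both are routine.

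The gap is that the remaining step --- passing from ``centralizers of $\pi$-elements are $\pi$-groups'' to the existence of a nilpotent Hall $\pi$-subgroup --- is not proved; it is only named. Listing signalizer functor theory, the Bender method, and Glauberman-type theorems, and announcing a minimal counterexample, is not an argument: you define no functor, set up no configuration, and derive no contradiction. That step is not a technical lemma supporting the theorem; it \emph{is} the theorem --- it is precisely the content of Suzuki's Theorem B, whose proof by the Feit--Thompson--Bender--Glauberman method occupies the bulk of a very long paper. Note also that the paper you are working from does not prove this lemma either: it quotes it directly from \cite[Theorem B]{suz2}, and its Remark 2 extends the statement to arbitrary finite groups using Williams' structure theorem for groups with disconnected prime graph, again treating the simple case as a black box. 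So your proposal, as written, amounts to a correct reformulation of the statement together with a citation-shaped hole where the proof should be. If appealing to the literature is permitted, the honest course is the paper's own: cite Suzuki, or alternatively the Gruenberg--Kegel/Williams theory, which supplies the nilpotent isolated Hall $\pi_i$-subgroups attached to the odd components. If it is not permitted, then no proof of feasible length exists along the lines you sketch.
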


{\em Remark $1$.} A similar result as Lemma \ref{prop1} can be
found in \cite[Lemma 4]{km}.

{\em Remark $2$.} Note that Lemma $\ref{prop1}$ is true for all
{\em finite groups} not only finite simple groups. As a matter of
fact, if $G$ is a finite group with disconnected graph ${\rm
GK}(G)$ then one of the following holds (see \cite[Theorem A]{w}):
\begin{itemize}
\item[$(1)$] $s(G)=2$, $G$ is a Frobenius group.
\item[$(2)$] $s(G)=2$, $G$ is a 2-Frobenius group, i.e., $G=ABC$ where $A$,
$AB$ are normal subgroups in $G$, $B$ is a normal subgroup in
$BC$, and $AB$, $BC$ are Frobenius groups.
\item[$(3)$] There exists a non-abelian simple group $P$ such that $P\leqslant \bar{G}=G/N\leqslant {\rm Aut}(P)$
for some nilpotent normal $\pi_1(G)$-subgroup $N$ of $G$ and
${\bar G}/P$ is a $\pi_1(G)$-group. Moreover, ${\rm GK}(P)$ is
disconnected, $s(P)\geqslant s(G)$ and for every $i$,
$2\leqslant  i\leqslant s(G)$, there exist $j$, $2\leqslant
j\leqslant s(P)$, such that $\omega_i(G)=\omega_j(P)$.
\end{itemize}

\cite[Lemma 5]{mazurov-2002}: First assume that $G$ satisfies
$(1)$, that is, $G=KC$ is a Frobenius group with kernel $K$ and
complement $C$. Then ${\rm GK}(K)$ and ${\rm GK}(C)$ are
connected components of ${\rm GK}(G)$. Since $K$ is nilpotent,
${\rm GK}(K)$ is always complete. Moreover, either $C$ is
solvable and ${\rm GK}(C)$ is complete, or $C$ contains a normal
subgroup $L\cong {\rm SL}_2(5)$ such that $(|L|, |C:L|)\leqslant
2$ and ${\rm GK}(C)$ can be obtained from the complete graph on
$\pi(C)$ by deleting the edge $\{3, 5\}$. Thus, in all cases,
$\pi_2(G)$ is clique, as required.

\cite[Lemma 7(1)]{mazurov-2002}: \ Next suppose that $G$ satisfies
(2). In this case, ${\rm GK}(B)$ and ${\rm GK}(AC)$ are connected
components of ${\rm GK}(G)$ and are both complete graphs.
Therefore, the connected components $\pi_1(G)$ and $\pi_2(G)$ in
this case are cliques, as required.

Finally, if condition $(3)$ holds, then it follows by Lemma
\ref{prop1} that the connected components $\pi_j(P)$ for
$2\leqslant j\leqslant s(P)$ and so the connected components
$\pi_j(G)$ for $2\leqslant j\leqslant s(G)$ are cliques. This
completes our claim.

\begin{theorem}\label{p2} For any finite group $G$, we have
\begin{equation}\label{e1-2012}\sum_{i=1}^sn_i(n_i-1)-(n_1-1)(n_1-2)\leqslant\vartheta(G)\leqslant\sum_{i=1}^sn_i(n_i-1).\end{equation} where $s=s(G)$ and
$n_i=|\pi_i|$, $i=1, 2, \ldots, s$. In particular, if ${\rm
GK}[\pi_1]$ is complete, then we have
$$ \vartheta(G)=\sum_{i=1}^sn_i(n_i-1).$$
\end{theorem}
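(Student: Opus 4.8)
The plan is to reduce everything to edge-counting, component by component, using the handshake identity $\vartheta(\Gamma)=2|E(\Gamma)|$ together with the structural fact recorded in Remark $2$ above: every connected component of ${\rm GK}(G)$ other than $\pi_1$ is a clique. First I would note that distinct connected components share no edges, so $\vartheta(G)=\sum_{i=1}^s\vartheta(\Gamma_i)$, where $\Gamma_i={\rm GK}[\pi_i]$ denotes the subgraph induced on the $i$th component. This localizes the problem to estimating $\vartheta(\Gamma_i)$ for each $i$ separately.

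For the components with $i\geqslant 2$, Lemma \ref{prop1} in the form valid for arbitrary finite groups (see Remark $2$) guarantees that $\Gamma_i$ is complete, whence $|E(\Gamma_i)|=\binom{n_i}{2}$ and $\vartheta(\Gamma_i)=n_i(n_i-1)$ exactly. Thus these components contribute the fixed amount $\sum_{i=2}^s n_i(n_i-1)$ to $\vartheta(G)$, and the only freedom lies in $\Gamma_1$. Since $\Gamma_1$ is connected on $n_1$ vertices, its edge count satisfies $n_1-1\leqslant |E(\Gamma_1)|\leqslant\binom{n_1}{2}$, the lower bound being attained by a spanning tree and the upper bound by the complete graph $K_{n_1}$; doubling gives $2(n_1-1)\leqslant\vartheta(\Gamma_1)\leqslant n_1(n_1-1)$.

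Combining these estimates yields the theorem. For the upper bound, $\vartheta(G)=\vartheta(\Gamma_1)+\sum_{i=2}^s n_i(n_i-1)\leqslant n_1(n_1-1)+\sum_{i=2}^s n_i(n_i-1)=\sum_{i=1}^s n_i(n_i-1)$. For the lower bound I would invoke the identity $2(n_1-1)=n_1(n_1-1)-(n_1-1)(n_1-2)$, so that $\vartheta(G)\geqslant 2(n_1-1)+\sum_{i=2}^s n_i(n_i-1)=\sum_{i=1}^s n_i(n_i-1)-(n_1-1)(n_1-2)$, which is precisely \eqref{e1-2012}. Finally, when ${\rm GK}[\pi_1]$ is complete we have $\vartheta(\Gamma_1)=n_1(n_1-1)$, so every component contributes its maximal amount and the stated equality $\vartheta(G)=\sum_{i=1}^s n_i(n_i-1)$ follows at once.

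I would not expect a serious obstacle: the entire content is carried by the already-established fact that the non-principal components are cliques (Remark $2$), after which the argument is pure edge-counting via the handshake lemma and the elementary edge bounds for a connected graph. The one point meriting a little care is the word \emph{sharp}: to confirm the inequalities cannot be improved one should exhibit realizing configurations, namely a complete induced subgraph on $\pi_1$ for the upper bound and a $\pi_1$ reduced to a spanning tree for the lower bound, which is straightforward once the two extremal edge counts above are in hand.
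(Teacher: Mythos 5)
Your proposal is correct and follows essentially the same route as the paper: both rest on the decomposition ${\rm GK}(G)={\rm GK}[\pi_1]\oplus K_{n_2}\oplus\cdots\oplus K_{n_s}$ furnished by Suzuki's lemma and Remark 2, bound ${\rm GK}[\pi_1]$ above by $K_{n_1}$ and below by a spanning tree (your edge-count inequality $n_1-1\leqslant |E(\Gamma_1)|\leqslant\binom{n_1}{2}$ is just the handshake-lemma restatement of the paper's degree-majorization and subgraph-containment steps), and then sum the contributions. The only cosmetic difference is that the paper phrases the upper bound via degree-majorization rather than directly via edge counts.
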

\begin{proof}
First of all, Lemma $\ref{prop1}$ and Remark 1 show that the
prime graph of an arbitrary finite group $G$ has the following
form:
$${\rm GK}(G)={\rm GK}[\pi_1]\oplus K_{n_2}\oplus \cdots\oplus K_{n_s},$$
where ${\rm GK}[\pi_1]$ denotes the induced subgraph ${\rm
GK}(G)[\pi_1]$, $n_i=|\pi_i|$ and $s=s(G)$. On the one hand, it
is clear that the prime graph ${\rm GK}(G)$ is degree-majorized
by the graph $K_{n_1}\oplus K_{n_2}\oplus \cdots\oplus K_{n_s}$.
Therefore, we obtain
\begin{equation}\label{e1}
\vartheta(G)\leqslant\vartheta(K_{n_1}\oplus K_{n_2}\oplus
\cdots\oplus K_{n_s})=\sum_{i=1}^sn_i(n_i-1).
\end{equation}
Furthermore, equality can only hold in Eq. (\ref{e1}) if  ${\rm
GK}[\pi_1]$ is complete. On the other hand, since ${\rm
GK}[\pi_1]$ is a connected subgraph of ${\rm GK}(G)$, it has a
spanning tree, say $T$. Obviously $\vartheta(T)=2(n_1-1)$. Now,
since $$T\oplus K_{n_2}\oplus \cdots\oplus K_{n_s}\subseteq {\rm
GK}[\pi_1]\oplus K_{n_2}\oplus \cdots\oplus K_{n_s}={\rm
GK}(G),$$ we conclude that
$$\begin{array}{lll}
\vartheta(G)&\geqslant& \vartheta(T\oplus K_{n_2}\oplus
\cdots\oplus
K_{n_s})\\[0.1cm]
& = & \vartheta(T)+\vartheta(K_{n_2}\oplus \cdots\oplus
K_{n_s})\\[0.1cm]
& = & 2(n_1-1)+\sum\limits_{i=2}^sn_i(n_i-1)\\[0.1cm]
& = & \sum\limits_{i=1}^sn_i(n_i-1)-(n_1-1)(n_1-2),
\end{array} $$
and the proof is complete. \end{proof}

Here, we present some examples to illustrate lower and upper
bounds in (\ref{e1-2012}) are sharp.

{\em Some Examples.} $(1)$ Let $G$ be a nilpotent group with
$|\pi(G)|=n_1$. Then the prime graph ${\rm GK}(G)$ is a complete
graph, $s(G)=1$ and so $\vartheta(G)=n_1(n_1-1)$, which shows
that the upper bound in (\ref{e1-2012}) is sharp. Of course, it is
easy to see that there are many non-nilpotent groups with complete
prime graph, for example, take $G=\mathbb{A}_5\times
\mathbb{A}_5$.

$(2)$ In \cite[Theorem 1]{lm}, a description is presented for all
finite non-abelian simple groups whose prime graph connected
components are cliques (for a revised list see \cite[Corollary
7.6]{vv}). The list of such groups is given in Table 1. If $G$
is  any of the groups listed in Table 1, then we have
$$\vartheta(G)=\sum_{i=1}^sn_i(n_i-1),$$ which is the upper bound in (\ref{e1-2012}).
\begin{center}
{\small {\bf Table 1.} \ {\it Finite non-abelian simple groups
whose prime
graph connected components are cliques }\\[0.3cm]
$
\begin{array}{llllll}
\hline
{\rm Group} & \mbox{restrictions} & \pi_1 & \pi_2 & \pi_3 & \pi_4\\
\hline \mathbb{A}_5, \mathbb{A}_6 & & \{2\}& \{3\}& \{5\}&\\[0.1cm]
\mathbb{A}_7 & & \{2, 3\}& \{5\}& \{7\} &
\\[0.1cm]
\mathbb{A}_9 & & \{2, 3, 5\}& \{7\}& & \\[0.1cm]
\mathbb{A}_{12} & & \{2, 3, 5, 7\}& \{11\}& & \\[0.1cm]
\mathbb{A}_{13} & & \{2, 3, 5, 7\}& \{11\}& \{13\} &
\\[0.1cm]
M_{11} & &\{2, 3\} & \{5\} & \{11\} &\\[0.1cm]
M_{22} & &\{2, 3\} & \{5\}& \{7\} & \{11\}\\[0.1cm]
J_1 & & \{2, 3, 5\}& \{7\}& \{11\} & \{19\}\\[0.1cm]
J_2 & & \{2, 3, 5\}& \{7\} & &\\[0.1cm]
J_3 & & \{2, 3, 5\} & \{17\} & \{19\}&\\[0.1cm]
HiS & & \{2, 3, 5\} & \{7\} & \{11\}&\\[0.1cm]
A_1(q) & q\equiv 1\pmod{4} & \pi(q-1) &  \{p\} & \pi(\frac{q+1}{2}) & \\[0.1cm]
A_1(q) & q\equiv -1\pmod{4}
& \pi(q+1) &  \{p\} & \pi(\frac{q-1}{2}) & \\[0.1cm]
A_1(q) & 2<q\equiv 0 \pmod{2} & \{2\} & \pi(q-1) & \pi(q+1) & \\[0.1cm]
A_2(4) & & \{2\}& \{3\} & \{5\}& \{7\}\\[0.1cm]
A_2(q) & (q-1)_3\neq 3,  \ q+1=2^k &  \pi(q(q^2-1))& \pi(\frac{q^2+q+1}{(3,q-1)}) &&\\[0.1cm]
{^2A}_3(2) & & \{2, 3\} & \{5\} & &\\[0.1cm]
{^2A}_5(2) & & \{2, 3, 5\} & \{7\} & \{11\} &\\[0.1cm]
{^2A}_2(q) & (q+1)_3\neq 3, \  q-1=2^k & \pi(q(q^2-1))& \pi(\frac{q^2-q+1}{(3,q+1)})& &\\[0.1cm]
C_3(2) & & \{2, 3, 5\} & \{7\} & & \\[0.1cm]
C_2(q) & q>2& \pi(q(q^2-1))& \pi(\frac{q^2+1}{(2, q-1)})&&\\[0.1cm]
D_4(2) & & \{2, 3, 5\} & \{7\} &&\\[0.1cm]
{^3D}_4(2) & & \{2, 3, 7\}& \{13\}& &\\[0.1cm]
G_2(q) & q=3^k& \pi(q(q^2-1)) & \pi(q^2-q+1) &
\pi(q^2+q+1) & \\[0.1cm]
{^2B}_2(q) & q=2^{2k+1}>1 & \{2\}& \pi(q-1) & \pi(q-\sqrt{2q}+1) & \pi(q+\sqrt{2q}+1)\\[0.3cm]
\hline
\end{array}
$}
\end{center}

$(3)$ Let $G$ be a Frobenius group with kernel $K$ and complement
$C$. Then ${\rm GK}(K)$ and ${\rm GK}(C)$ are connected components
of ${\rm GK}(G)$, and so ${\rm GK}(G)={\rm GK}(K)\oplus {\rm
GK}(C)$. Moreover, ${\rm GK}(K)$ is always complete because $K$ is
nilpotent, and ${\rm GK}(C)$ is complete if $C$ is solvable,
otherwise ${\rm GK}(C)$ can be obtained from the complete graph
on $\pi(C)$ by deleting the edge $\{3, 5\}$ (\cite[Lemma
5]{mazurov-2002}):
$${\rm GK}(C)=\left\{\begin{array}{ll} K_{|\pi(C)|} & \mbox{if} \  C \
\mbox{is
solvable,}\\[0.3cm] K_{|\pi(C)|}\setminus \{3, 5\} & \mbox{otherwise.} \end{array}
\right.$$ Let $\{n_1, n_2\}=\{|\pi(K)|, |\pi(C)|\}$. By what
observed above, we conclude that
$$\vartheta(G)=\left\{\begin{array}{ll} \sum_{i=1}^2n_i(n_i-1) & \mbox{if} \  C \
\mbox{is
solvable,}\\[0.3cm] \sum_{i=1}^2n_i(n_i-1)-2 & \mbox{otherwise.} \end{array}
\right.$$ Therefore, in the case $G$ is solvable, the value of
$\vartheta(G)$ is equal to the upper bound in (\ref{e1-2012}). On
the other hand, if $G$ is non-solvable, then $C$ contains a normal
subgroup $C_0$ of index $\leqslant 2$ such that $C_0\cong Z\times
{\rm SL}(2,5)$, where every Sylow subgroup of $Z$ is cyclic and
$\pi(Z)\cap \pi(30)=\emptyset$. Therefore, in the case when
$|\pi(C)|=n_1=3$, we obtain
$$\vartheta(G)= n_2(n_2-1)+4=\sum_{i=1}^2n_i(n_i-1)-(n_1-1)(n_1-2),$$
which shows that the lower bound in (\ref{e1-2012}) is sharp.
\section{The Degree of $2$ and the Characteristic in the Prime Graph of Simple Groups of Lie Type}
Let $G$ be a finite group with
$$|G|=p_1^{\alpha_1}p_2^{\alpha_2}\cdots p_k^{\alpha_k},$$
where $p_1<p_2<\cdots<p_k$ are primes and $\alpha_1, \alpha_2,
\ldots, \alpha_k, k$ are natural numbers. In many cases, the
degree pattern  $${\rm D}(G)=\big(\deg_G(p_1), \deg_G(p_2),
\ldots, \deg_G(p_k)\big),$$ of $G$, gives us more information
about the structure of $G$ or its prime graph ${\rm GK}(G)$. We
illustrate this with the following easy observations:
\begin{itemize}
\item[$(1)$] If $\deg_G(p_i)=0$ for some $i$, then $\{p_i\}$ is a
connected component of the prime graph ${\rm GK}(G)$ and $G$ is a
$C_{p_i, p_i}$-group which means that the centralizer of any
non-trivial $p_i$-element in $G$ is a $p_i$-group. Moreover, if
we put $\Omega_l(G):=\{p_i\in \pi(G) \ | \ \deg_G(p_i)=l\}$,
$0\leqslant l \leqslant k-1$, then $|\Omega_0(G)|\leqslant
s(G)\leqslant 6$ (see \cite{w}). Also, if $|\Omega_0(G)|\geqslant
3$, then $G$ is a non-solvable group (see \cite[Lemma 8]{lm}).
The same is true if $|\Omega_0(G)|=2$ and $|\pi(G)|\geqslant 3$.
\item[$(2)$]  If $\deg_G(p_i)=k-1$ for some $i$, then the prime graph ${\rm
GK}(G)$ is connected. Moreover, if $G$ is a simple group, then
$G$ is isomorphic to an alternating group $\mathbb{A}_n$ with
$n-l_n\geqslant 3$ (see Proposition \ref{full-degree}).
\item[$(3)$] If $D(G)=D(\mathbb{A}_n)$ and $\pi(G)=\pi(\mathbb{A}_n)$ (resp.
$D(G)=D(\mathbb{S}_n)$ and $\pi(G)=\pi(\mathbb{S}_n)$), then ${\rm
GK}(G)={\rm GK}(\mathbb{A}_n)$ (resp. ${\rm GK}(G)={\rm
GK}(\mathbb{S}_n)$). (see \cite[Lemma 2.15]{kogani})

\end{itemize}

In this section, we determine the degrees of two particular
vertices in the prime graphs of simple groups of Lie type, namely
the degree of the vertex 2 and the vertex $p$, where $p$ is the
defining characteristic. Before beginning with a general study we
want to state a consequence to Zsigmondy's theorem
\cite{zsigmondy}, given below, which will be used in the proof of
the next propositions.
\begin{lm}[Zsigmondy]\label{l1}
Let $a>1$ be a natural number. For every natural number $m$,
there exists a prime $r$ with $e(r, a)=m$ except for the
following cases: $(a, m)\in \{(2,1), (3,1), (2, 6)\}$.
\end{lm}

A prime $r$ with $e(r, a)=m$ is called a primitive prime divisor
(or a Zsigmondy prime) of $a^m-1$. By Lemma \ref{l1}, such a
prime exists except for the cases we mentioned already in the
lemma. Given a natural number $a$, we denote by $R_m(a)$ the set
of all primitive prime divisors of a number $a^m-1$. For
instance, if $a=61$ and $m=6$, then $R_6(61)=\{7, 523\}$.

We now focus our attention on simple groups of Lie type. In fact,
the order of any finite simple group of Lie type $G$ of rank $n$
over a field ${\rm GF}(q)$, $q=p^n$, is equal to
$$|G|=\frac{1}{d}q^N(q^{m_1}\pm 1)(q^{m_2}\pm 1)\cdots(q^{m_n}\pm 1),$$
(see 9.4.10 and 14.3.1 in \cite{carter}). Therefore any prime
divisor of $|G|$ distinct from the characteristic $p$ is a
primitive prime divisor of $q^m-1$, for some natural $m$. In
\cite{vv, vvc}, Vasil'ev and Vdovin found an exhaustive arithmetic
criterion of the adjacency in the prime graph of every finite
non-abelian simple group. In what follows, using the results
collected in \cite{vv, vvc}, we determine the degree of vertices
$2$ and the characteristic $p$, in the prime graph of every simple
group of Lie type.
\begin{proposition}\label{prop-Clasic-1}
Let $G=A_{n-1}(q)$ \ $(n\geqslant 2)$ defined over a field of
characteristic $p\neq 2$. Then the following hold.
\begin{itemize}
\item[{$(1)$}]
${\rm deg}(p)=\left\{ \begin{array}{lll} 0
& \mbox{if} & n=2, \\[0.3cm]
|\pi(\frac{q-1}{(3, q-1)})| & \mbox{if} & n=3, \\[0.3cm]
|\pi(G)|-|R_{n-1}(q)\cup R_{n}(q)|-1 & \mbox{if} &  n\geqslant 4.
\\
\end{array} \right.$
\item[{$(2)$}] If $n=2$, then $$\deg(2)=\left\{\begin{array}{lll}
|\pi(q-1)|-1 & \mbox{if} & 4|q-1,\\[0.2cm]
|\pi(q+1)|-1 & \mbox{if} & 4|q+1.\\[0.2cm]
\end{array} \right. $$

If $n\geqslant 3$, then
 $${\rm deg}(2)=\left\{
\begin{array}{lll} |\pi(G)|-|R_{n}(q)|-1 & \mbox{if}
& n_2<(q-1)_2, \\[0.3cm]
|\pi(G)|-|R_{n-1}(q)|-1 & \mbox{if} & (q-1)_2<n_2 \ \ \mbox{or} \ \  n_2=(q-1)_2=2, \\[0.3cm]
|\pi(G)|-|R_{n-1}(q)\cup R_{n}(q)|-1 & \mbox{if} &  n_2=(q-1)_2 \ \mbox{and} \ 4\mid {q-1}. \\
\end{array} \right.
$$ \end{itemize}
\end{proposition}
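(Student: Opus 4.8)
The plan is to express everything in terms of the Vasil'ev–Vdovin adjacency criterion for $A_{n-1}(q)=L_n(q)$, which is presumably quoted in the preceding text. The key arithmetic fact is that every prime divisor $r\neq p$ of $|G|$ is a primitive prime divisor of $q^m-1$ for a unique $m=e(r,q)$, and in the prime graph of a classical group two such primes $r\in R_i(q)$ and $s\in R_j(q)$ are adjacent according to explicit conditions on $i$ and $j$ (together with the ambient rank $n$). So I would first record that $\pi(G)\setminus\{p\}$ is partitioned as $\bigsqcup_m R_m(q)$ over those $m$ for which $R_m(q)\neq\emptyset$ and $q^m-1$ contributes to $|G|$; for $A_{n-1}(q)$ the relevant $m$ run over $1,2,\dots,n$ (since $|G|$ involves $q^i-1$ for $i=2,\dots,n$ and $q-1$). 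Computing $\deg(p)$ then reduces to counting which $R_m(q)$ contain a prime adjacent to $p$, and similarly $\deg(2)$ reduces to identifying the component $R_{e(2,q)}(q)$ containing $2$ and counting its neighbours.

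For part $(1)$, the defining characteristic $p$ is adjacent to $r\in R_m(q)$ exactly when $G$ contains an element of order $pr$; the Vasil'ev–Vdovin rule makes $p$ adjacent to $R_m(q)$ for all $m$ except the two largest indices $m=n-1$ and $m=n$ (these correspond to the two component‑type tori whose normalisers meet the unipotent radical trivially). Thus for $n\geq 4$ the non‑neighbours of $p$ are precisely $\{p\}\cup R_{n-1}(q)\cup R_n(q)$, giving $\deg(p)=|\pi(G)|-|R_{n-1}(q)\cup R_n(q)|-1$. The small cases $n=2,3$ I would check by hand against the known structure: for $L_2(q)$ the prime $p$ is isolated, so $\deg(p)=0$; for $L_3(q)$ the only primes adjacent to $p$ are those dividing $(q-1)/(3,q-1)$, which is a direct reading of the maximal‑torus structure. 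The uniqueness of $e(r,q)$ makes the union $R_{n-1}(q)\cup R_n(q)$ disjoint, so its size is just $|R_{n-1}(q)|+|R_n(q)|$.

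For part $(2)$, the crux is locating the vertex $2$: since $p$ is odd, $2\in R_{e(2,q)}(q)$ where $e(2,q)\in\{1,2\}$ according to $q\equiv 1$ or $q\not\equiv 1\pmod 4$. The three displayed cases correspond to the three possibilities for how the $2$‑adic valuation $n_2$ (the $2$‑part of $n$) compares with $(q-1)_2$, which is exactly the invariant governing in which $R_m(q)$ the prime $2$ lands and which other $R_m(q)$ are forced to be non‑adjacent to it by the Vasil'ev–Vdovin condition. When $n_2<(q-1)_2$ the relevant exclusion is $R_n(q)$; when $(q-1)_2<n_2$ or $n_2=(q-1)_2=2$ it is $R_{n-1}(q)$; and in the borderline $n_2=(q-1)_2$ with $4\mid q-1$ both $R_{n-1}(q)$ and $R_n(q)$ are excluded. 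So $\deg(2)=|\pi(G)|-(\text{size of the excluded }R_m\text{'s})-1$, where the $-1$ removes $p$, which is never adjacent to $2$ here. The $n=2$ sub‑case I would treat separately: for $L_2(q)$ the component containing $2$ is $\pi(q-1)$ or $\pi(q+1)$ depending on $q\bmod 4$, and every odd prime in that component is adjacent to $2$, so $\deg(2)=|\pi(q\mp 1)|-1$.

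The main obstacle, and the part demanding genuine care rather than bookkeeping, is the precise translation of the adjacency criterion into the trichotomy for $\deg(2)$: one must verify that the comparison of $n_2$ with $(q-1)_2$ is genuinely the right dividing line, i.e. that it correctly predicts both the index $m$ with $2\in R_m(q)$ and which of $R_{n-1}(q),R_n(q)$ fail to be adjacent to $2$. This requires invoking the function $\nu$ (or the underlying lemma relating $e(2,q)$, $(q-1)_2$, and the adjacency of $2$ to primitive prime divisors) and checking the boundary case $n_2=(q-1)_2=2$ does not slip into the third branch. Everything else—the disjointness of the $R_m(q)$, the count $|\pi(G)|=1+\sum_m|R_m(q)|$, and the explicit small‑rank computations—is routine once the adjacency rule is fixed.
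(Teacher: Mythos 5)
Your route is the same as the paper's: for large $n$ invoke the Vasil'ev--Vdovin adjacency criterion (Propositions 3.1 and 4.1 of \cite{vv}), partition $\pi(G)\setminus\{p\}$ into the classes $R_m(q)$, and settle $n=2,3$ by the explicit spectra $\mu(A_1(q))$ and $\mu(A_2(q))$ (your ``direct reading of the maximal-torus structure'' is exactly what the paper does via $\mu$). Part $(1)$ and the $n=2$ case of part $(2)$ are correct as you argue them, and your description of the trichotomy for $n\geqslant 3$ sits at the same level of detail as the paper, which likewise leaves the translation of Proposition 4.1 of \cite{vv} as ``easy to see.''

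There is, however, one genuine error in your part $(2)$: the claim that the $-1$ in $\deg(2)=|\pi(G)|-|R_{*}|-1$ ``removes $p$, which is never adjacent to $2$ here.'' This is false: for $n\geqslant 3$ the vertices $2$ and $p$ \emph{are} adjacent in ${\rm GK}(A_{n-1}(q))$ --- the paper states this explicitly and relies on it --- and the adjacency is encoded in the very criterion you use, since Proposition 3.1 of \cite{vv} declares $p\nsim r$ only for \emph{odd} $r$ with $e(r,q)>n-2$, an oddness qualifier you silently dropped when stating the rule in part $(1)$. The $-1$ in the correct count removes the vertex $2$ itself (a vertex does not contribute to its own degree), not $p$. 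Your displayed formula survives only because two mistakes cancel: you wrongly excluded $p$ from the neighbours of $2$, and you wrongly counted $2$ as its own neighbour. The same dropped qualifier matters in part $(1)$ for $n\geqslant 4$: the criterion only excludes the \emph{odd} primes in $R_{n-1}(q)\cup R_n(q)$, so to get the clean formula one must also observe, as the paper does, that $e(2,q)\leqslant 2<n-1$ forces $2\notin R_{n-1}(q)\cup R_n(q)$.
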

\begin{proof}
Note that $$|A_{n-1}(q)|=\frac{1}{(n,
q-1)}q^{\frac{n(n-1)}{2}}(q^2-1)(q^3-1)\cdots(q^n-1).$$

$(1)$ First, let $r\in\pi(G)\setminus \{p\}$. By Proposition 3.1
in \cite{vv}, we have $p\nsim r$ if and only if $r$ is odd and
$e(r, q)>n-2$, or equivalently, $p\nsim r$ if and only if $r$ is
odd and $e(r, q)\in\{n-1, n\}$.

As before, we recall that the vertices $2$ and $p$ are adjacent
in the prime graph ${\rm GK}(G)$, except $G=A_1(q)$. Therefore, if
$n\geqslant 4$, then since $2$ is not a Zsigmondy prime of
$q^{n-1}-1$ or $q^n-1$, we obtain
$${\rm deg}(p)=|\pi(G)|-|R_{n-1}(q)\cup R_{n}(q)|-1.$$ Now, we
consider the remaining cases, namely $n=2, 3$. Assume first that
$n=2$. In this case, we have
$$
\mu(A_1(q))=\left\{p, \frac{q-1}{2}, \frac{q+1}{2}\right\},$$ and
so ${\rm deg}(p)=0$. Assume next that $n=3$. In this case, we have
$$
\mu(A_2(q))=\left\{ \begin{array}{lll} \{q-1, \frac{p(q-1)}{3},
\frac{q^2-1}{3}, \frac{q^2+q+1}{3}\} & \mbox{if} &
d=3,\\[0.3cm]

\{p(q-1), q^2-1, q^2+q+1\} & \mbox{if} & d=1; \\
\end{array} \right.
$$
where $d=(3, q-1)$, and hence ${\rm deg}(p)=|\pi(\frac{q-1}{d})|$,
where $d=(3, q-1)$.

$(2)$ First assume that $n=2$. Again considering the spectrum of
$A_1(q)$, it is easy to see that
$$\deg(2)=\left\{\begin{array}{lll}
|\pi(q-1)|-1 & \mbox{if} & 4|q-1,\\[0.2cm]
|\pi(q+1)|-1 & \mbox{if} & 4|q+1.\\[0.2cm]
\end{array} \right. $$
Now, we may assume that $n\geqslant 3$. In this case, from
Proposition 4.1 in \cite{vv}, it is easy to see that
$${\rm deg}(2)=\left\{
\begin{array}{lll} |\pi(G)|-|R_{n}(q)|-1 & \mbox{if}
& n_2<(q-1)_2, \\[0.3cm]
|\pi(G)|-|R_{n-1}(q)|-1 & \mbox{if} & (q-1)_2<n_2 \ \ \mbox{or} \ \  n_2=(q-1)_2=2, \\[0.3cm]
|\pi(G)|-|R_{n-1}(q)\cup R_{n}(q)|-1 & \mbox{if} &  n_2=(q-1)_2 \ \mbox{and} \ 4\mid {q-1}. \\
\end{array} \right.
$$
This completes the proof.
\end{proof}
\begin{proposition}
Let $G={^2A}_{n-1}(q)$ $(n\geqslant 3)$ defined over a field of
characteristic $p\neq 2$. Then the following hold.
\begin{itemize}
\item[{$(1)$}]
If $n$ is even, then
$$
{\rm deg}(p)= \left\{
\begin{array}{lll}
|\pi(G)|-|R_{2(n-1)}(q)\cup R_{n}(q)|-1 & \mbox{if} & 4\mid n,
\\[0.3cm]
|\pi(G)|-|R_{2(n-1)}(q)|-1 & \mbox{if} &  2\parallel n. \\
\end{array}
\right.
$$
Furthermore, if $4\mid n$, then
$${\rm deg}(2)=\left\{\begin{array}{lll}
|\pi(G)|-|R_{n}(q)|-1      &   \mbox{if} & n_2<(q+1)_2,
\\[0.3cm]
|\pi(G)|-|R_{2(n-1)}(q)|-1      &   \mbox{if} &  (q+1)_2<n_2, \\[0.3cm]
|\pi(G)|-|R_n(q)\cup R_{2(n-1)}(q)|-1      &   \mbox{if} & (q+1)_2=n_2;\\
\end{array}
\right.
$$
and if  $2\parallel n$, then $ {\rm
deg}(2)=|\pi(G)|-|R_{2(n-1)}(q)|-1$.
\item[{$(2)$}]
If $n$ is odd, then
$$
{\rm deg}(p)= \left\{
\begin{array}{lll}
|\pi(G)|-|R_{n-1}(q)\cup R_{2n}(q)|-1 & \mbox{if} & 4\mid {n-1},
\\[0.3cm]
|\pi(G)|-|R_{2n}(q)|-1 & \mbox{if} & 2\parallel {n-1};  \\
\end{array}
\right.
$$
and $${\rm deg}(2)=|\pi(G)|-|R_{2n}(q)|-1. $$ \end{itemize}
\end{proposition}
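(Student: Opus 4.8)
The plan is to follow the same route as in the proof of Proposition \ref{prop-Clasic-1}, now starting from the order of the unitary group
$$|{}^2A_{n-1}(q)|=\frac{1}{(n,q+1)}\,q^{\frac{n(n-1)}{2}}\prod_{i=2}^{n}\bigl(q^{i}-(-1)^{i}\bigr),$$
and reading off the adjacencies of the two distinguished vertices $p$ and $2$ from the arithmetic criteria of \cite{vv} (Propositions 3.1 and 4.1 there), which are formulated uniformly for all classical groups. The one genuinely new ingredient is the twist. Since $q^{i}-(-1)^{i}$ equals $q^{i}-1$ for even $i$ and $q^{i}+1$ for odd $i$, a short calculation shows that for an odd prime $r\neq p$ the least $i$ with $r\mid q^{i}-(-1)^{i}$ is exactly $\nu\bigl(e(r,q)\bigr)$, where $\nu$ is the function defined in Section 1; equivalently, $\nu(e(r,q))$ is the order of the residue of $-q$ in $(\mathbb{Z}/r\mathbb{Z})^{\times}$, and geometrically it is the dimension of the smallest nondegenerate invariant subspace carrying a nontrivial eigenvalue of a semisimple $r$-element. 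Thus the role played by $e(r,q)$ in the linear case is played here by $\nu(e(r,q))$, and a prime $r\in R_{m}(q)$ contributes to the factor $q^{i}-(-1)^{i}$ precisely when $\nu(m)\mid i$.

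First I would settle $\deg(p)$. Because $n\geqslant 3$, the group is not ${}^2A_1(q)=A_1(q)$, so $2$ and $p$ are adjacent and $2$ is never among the non-neighbours of $p$. By the characteristic-adjacency criterion (the unitary form of \cite[Proposition 3.1]{vv}), an odd prime $r\neq p$ is \emph{non}-adjacent to $p$ if and only if $\nu(e(r,q))\in\{n-1,n\}$, i.e.\ the associated semisimple element leaves a complement of dimension at most $1$. It then remains to decide, using the three clauses in the definition of $\nu$, which Zsigmondy sets $R_{m}(q)$ realize $\nu(m)=n$ or $\nu(m)=n-1$; this is where the dependence of the answer on the residue of $n$ modulo $4$ enters, since for instance $\nu(m)=n$ forces $m=n$ when $4\mid n$ but forces $m=2n$ when $n$ is odd. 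Assembling the resulting sets and using $\deg(p)=|\pi(G)|-1-|\{r\neq p:r\nsim p\}|$ yields the stated expressions for $n$ even ($4\mid n$ versus $2\parallel n$) and for $n$ odd ($4\mid n-1$ versus $2\parallel n-1$).

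Next I would treat $\deg(2)$ by the $2$-adjacency criterion (the unitary form of \cite[Proposition 4.1]{vv}), noting that for ${}^2A_{n-1}(q)$ the relevant base is $q+1$ rather than the $q-1$ of the linear case. The adjacency of $2$ with a Zsigmondy prime is then decided by comparing the $2$-parts $n_{2}$ and $(q+1)_{2}$, and feeding the three regimes $n_{2}<(q+1)_{2}$, $(q+1)_{2}<n_{2}$ and $n_{2}=(q+1)_{2}$ into the criterion, translated through $\nu$ as above, produces the displayed formulas. As in Proposition \ref{prop-Clasic-1}, the full three-way split persists only when $4\mid n$, whereas for $2\parallel n$ and for $n$ odd the comparison degenerates and a single set ($R_{2(n-1)}(q)$, respectively $R_{2n}(q)$) accounts for the non-neighbours of $2$.

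The routine parts are the order formula and the final counting $\deg=|\pi(G)|-1-(\text{number of non-neighbours})$. I expect the main obstacle to be the $2$-adic bookkeeping: one must evaluate $\nu$ on $n$ and $n-1$ according to their classes modulo $4$, pin down exactly which $R_{m}(q)$ attain the top two values of $\nu$, and track the comparison of $n_{2}$ with $(q+1)_{2}$ for $\deg(2)$ without confusing it with the linear situation. One should also invoke Lemma \ref{l1} to guarantee that the Zsigmondy sets occurring in the formulas are nonempty for the parameters in question, so that the cardinalities $|R_{m}(q)|$ are the genuine counts and no spurious coincidences among the primitive primes occur.
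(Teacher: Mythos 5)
Your framework is the same as the paper's --- the order formula, the characteristic-adjacency criterion of \cite[Proposition 3.1]{vv} expressed through the function $\nu$, and the count $\deg=|\pi(G)|-1-(\mbox{number of non-neighbours})$ --- and your identification of $\nu(e(r,q))$ with the order of $-q$ modulo $r$ (the least $i$ with $r\mid q^i-(-1)^i$) is correct and is a nice way to justify why $\nu$ is the right substitute for $e(r,q)$. The genuine gap is in the step you dismiss with ``assembling the resulting sets \ldots yields the stated expressions'': carried out honestly, the assembly does \emph{not} yield the stated expressions in two of the four subcases, and these are exactly the subcases where the paper's proof has to do real work. You compute what $\nu(m)=n$ forces when $4\mid n$ (namely $m=n$) and when $n$ is odd (namely $m=2n$), but you never treat the class $2\parallel n$. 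There $\nu(m)=n$ forces $m=n/2$ (which is odd), and the primes of $R_{n/2}(q)$ cannot be ignored: by Lemma \ref{l1} this set is nonempty (note $n/2\geqslant 3$ since $n\geqslant 6$ and $q$ is odd), every $r\in R_{n/2}(q)$ divides $q^{n}-1$ and hence $|G|$, and the very criterion you invoke declares such $r$ non-adjacent to $p$ since $\nu(e(r,q))=n>n-2$. So your assembly produces $\deg(p)=|\pi(G)|-|R_{2(n-1)}(q)\cup R_{n/2}(q)|-1$ when $2\parallel n$, not the stated $|\pi(G)|-|R_{2(n-1)}(q)|-1$. The identical problem occurs in part $(2)$ when $2\parallel n-1$: there $\nu(m)=n-1$ forces $m=(n-1)/2$, and the primes of $R_{(n-1)/2}(q)$ divide $q^{n-1}-1\mid |G|$, yet the stated formula contains only $R_{2n}(q)$.

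This is not a cosmetic omission, because the paper's own proof confronts precisely this point: for $2\parallel n$ it argues that no $r\in\pi(G)$ can have $\nu(e(r,q))=n$ (``otherwise $q^{n/2}-1$ must divide the order of $G$''), and for $2\parallel n-1$ it asserts the analogous exclusion is ``easy to see''. Any complete proof of the proposition \emph{as stated} must either reproduce such an exclusion argument or explain how the extra Zsigmondy sets get absorbed; your proposal does neither, and taken literally it derives a different formula. Moreover, the point is genuinely delicate rather than routine: taking the criterion at face value, the extra primes really are non-neighbours of $p$. For instance, in ${^2A}_5(3)=U_6(3)$ one has $13\in R_3(3)$, $\nu(e(13,3))=\nu(3)=6=n$, and indeed $3\cdot 13\notin\omega(U_6(3))$, since the centralizer of an element of order $13$ in ${\rm GU}_6(3)$ is a maximal torus of order $3^6-1$, prime to $3$; note $13\notin R_{2(n-1)}(3)=R_{10}(3)=\{61\}$. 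So the exclusion the paper relies on cannot simply be waved through, and your write-up would need to engage with it explicitly. A minor separate correction: the $\deg(2)$ criterion for unitary groups is Proposition 4.2 of \cite{vv}, not (a ``unitary form of'') Proposition 4.1, which concerns the linear groups.
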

\begin{proof}
First of all, we recall that $$|{^2A}_{n-1}(q)|=\left\{
\begin{array}{lll} \frac{1}{(n, q+1)}q^{\frac{n(n-1)}{2}}(q^2-1)(q^3+1)\cdots
(q^{n-1}+1)(q^n-1) & \mbox{if $n$ is even,}\\[0.3cm]
\frac{1}{(n, q+1)}q^{\frac{n(n-1)}{2}}(q^2-1)(q^3+1)\cdots
(q^{n-1}-1)(q^n+1) & \mbox{if $n$ is odd.}\\ \end{array} \right.$$

$(1)$ Let $r\in \pi(G)\setminus \{p\}$. By Proposition 3.1 in
\cite{vv}, we have $r\nsim p$ if and only if $r$ is odd and
$\nu(e(r, q))>n-2$. Assume first that $4\mid n$. On the one hand,
it follows that the Zsigmondy primes of $q^{2(n-1)}-1$ and
$q^n-1$ are not adjacent to $p$, because $\nu(2(n-1))=n-1$ and
$\nu(n)=n$. On the other hand, it is easy to see that $2\sim p$.
Therefore, we have
$${\rm deg}(p)=|\pi(G)|-|R_{2(n-1)}(q)\cup R_{n}(q)|-1.$$ Assume next that $2\parallel n$.
In this case, $p\nsim r$ if and only if $\nu(e(r,q))=n$ or $n-1$.
Clearly $\nu(e(r,q))\neq n$, otherwise $q^{n/2}-1$ must divide
the order of $G$, a contradiction. Hence we conclude that $p\nsim
r$ if and only if $\nu(e(r,q))=n-1$, in other words $p$ is not
adjacent only to the Zsigmondy primes of $q^{2(n-1)}-1$, and so
$${\rm deg}(p)=|\pi(G)|-|R_{2(n-1)}(q)|-1,$$ as desired.

The results for $\deg(2)$ follow immediately from Proposition 4.2
in \cite{vv}.

$(2)$ If $4\mid {n-1}$, then  since $\nu(2n)=n$ and
$\nu(n-1)=n-1$, from Proposition 3.1 in \cite{vv}, we conclude
that $p$ is not adjacent to the Zsigmondy primes of $q^{2n}-1$
and $q^{n-1}-1$, and also $p$ is adjacent to the rest of prime
divisors of $|G|$. Thus $${\rm deg}(p)=|\pi(G)|-|R_{n-1}(q)\cup
R_{2n}(q)|-1.$$ Moreover, if $2\parallel {n-1}$, then it is easy
to see that
$${\rm deg}(p)=|\pi(G)|-|R_{2n}(q)|-1.$$

Now, we examine the degree of vertex $2$. Since $n$ is odd,
$n_2=1<(q+1)_2$ and so from Proposition 4.2 in \cite{vv}, it
yields that the vertex $2$ is not adjacent only to the Zsigmondy
primes of $q^{2n}-1$. Hence, we get
$${\rm deg}(2)=|\pi(G)|-|R_{2n}(q)|-1, $$ which completes the proof.
\end{proof}
\begin{proposition}\label{BnCn}
Let $G=C_n(q)$ $($resp. $B_n(q)$$)$ defined over a field of
characteristic $p\neq 2$. Then the following hold.
\begin{itemize}
\item[{$(1)$}] If $n$ is even, then ${\rm deg}(p)={\rm
deg}(2)=|\pi(G)|-|R_{2n}(q)|-1$;
\item[{$(2)$}]
If $n$ is odd, then ${\rm deg}(p)=|\pi(G)|-|R_n(q)\cup
R_{2n}(q)|-1$, while
$$
{\rm deg}(2)=\left \{ \begin{array}{lll} |\pi(G)|-|R_{2n}(q)|-1 &
\mbox{if} & 4\mid{q-1}, \\[0.3cm] |\pi(G)|-|R_{n}(q)|-1 &  \mbox{if} &
4\mid{q+1}.
\end{array} \right.
$$
\end{itemize}
\end{proposition}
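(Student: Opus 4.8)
The plan is to follow the same template as the two preceding propositions, reading the answer off the order formula together with the adjacency criteria of Vasil'ev and Vdovin (Propositions 3.1, 4.1 and 4.2 in \cite{vv}). First I would record that
\[ |C_n(q)|=\frac{1}{(2,q-1)}\,q^{n^2}\prod_{i=1}^n\bigl(q^{2i}-1\bigr), \]
and that $|B_n(q)|=|C_n(q)|$ for odd $q$, so the two groups may be treated at once. Since $r\mid q^{2i}-1$ exactly when $e(r,q)\mid 2i$, the least such $i$ is $\eta(e(r,q))$; hence an odd prime $r\neq p$ divides $|G|$ if and only if $\eta(e(r,q))\leqslant n$. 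Thus the only candidates for non-adjacency at the ``top level'' are the primes with the maximal value $\eta(e(r,q))=n$, and by Lemma \ref{l1} these form the Zsigmondy sets $R_m(q)$ (the formulas below remain correct when some $R_m(q)$ is empty).

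For $\deg(p)$ I would invoke Proposition 3.1 of \cite{vv}, which for these groups asserts that an odd $r\neq p$ satisfies $r\nsim p$ if and only if $\eta(e(r,q))>n-1$, i.e. $\eta(e(r,q))=n$. Solving $\eta(e)=n$: an even $e$ forces $e=2n$, while an odd $e$ forces $e=n$ and occurs only when $n$ is odd. Consequently the odd primes non-adjacent to $p$ are exactly $R_{2n}(q)$ when $n$ is even, and the disjoint union $R_n(q)\cup R_{2n}(q)$ when $n$ is odd (for $n$ even one has $\eta(n)=n/2\leqslant n-1$, so the $R_n(q)$ primes are adjacent to $p$). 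Since $2\sim p$ for these groups, exactly as used in the proof of Proposition \ref{prop-Clasic-1}, subtracting the non-adjacent primes and the vertex $p$ itself from $|\pi(G)|$ yields the stated values of $\deg(p)$.

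For $\deg(2)$ I would appeal to Propositions 4.1 and 4.2 of \cite{vv}, whose content here is the following fact: a prime $r$ with $\eta(e(r,q))=n$ lies in a cyclic torus of $Sp_{2n}(q)$ of order $q^n-1$ (when $e(r,q)=n$, $n$ odd) or $q^n+1$ (when $e(r,q)=2n$), and in the simple group $2\sim r$ precisely when that torus still contains a $2$-element after the central involution of $Sp_{2n}(q)$ is factored out, that is, when the relevant $2$-part exceeds $2$. For $n$ even one computes $(q^n+1)_2=2$ for every odd $q$, so the $R_{2n}(q)$ primes are non-adjacent to $2$ and $\deg(2)=\deg(p)$. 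For $n$ odd one has $(q^n-1)_2=(q-1)_2$ and $(q^n+1)_2=(q+1)_2$; hence if $4\mid q-1$ the primes of $R_{2n}(q)$ are non-adjacent to $2$ while those of $R_n(q)$ are adjacent, and if $4\mid q+1$ the two roles are interchanged, giving the two-line formula for $\deg(2)$.

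The routine part is $\deg(p)$, which drops out as soon as the threshold $\eta(e(r,q))=n$ is identified. I expect the main obstacle to be the $2$-adjacency analysis in part $(2)$ for odd $n$: one must faithfully translate the case distinctions of the cited criterion into the single statement that non-adjacency of $2$ to a top-level prime is governed by whether $(q^n\mp1)_2=2$, i.e. by whether the central involution absorbs all of the $2$-torsion of the ambient torus in $C_n(q)$ (resp. $B_n(q)$). Minor points to check along the way are that $R_n(q)$ and $R_{2n}(q)$ are disjoint, that $2\sim p$ indeed holds for all $n\geqslant 2$, and that $B_n(q)$ produces identical degrees since it shares both its order and the Vasil'ev--Vdovin adjacency criteria with $C_n(q)$.
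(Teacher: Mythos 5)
Your proposal is correct in substance and follows essentially the same route as the paper: write down the order formula, use the Vasil'ev--Vdovin adjacency criteria to identify the primes non-adjacent to $p$ and to $2$, observe these are exactly the Zsigmondy sets $R_n(q)$ and/or $R_{2n}(q)$, note $2\sim p$, and subtract. Your treatment of $\deg(p)$ via the threshold $\eta(e(r,q))=n$ is exactly the paper's argument.

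One concrete flaw: for the $2$-adjacency you cite Propositions 4.1 and 4.2 of \cite{vv}, but those are the criteria for $A_{n-1}(q)$ and ${}^2A_{n-1}(q)$ (they are what the paper uses in its two \emph{preceding} propositions); they say nothing about $B_n(q)$ or $C_n(q)$. The relevant result is Proposition 4.3 of \cite{vv}, which the paper applies directly: for $r\in\pi(G)\setminus\{2,p\}$ it gives $2\nsim r$ if and only if $e(r,q)=(3-e(2,q))n$ (reducing to $e(r,q)=2n$ when $n$ is even), and the two-line formula for odd $n$ drops out by evaluating $e(2,q)$. Your torus-theoretic reformulation --- non-adjacency of $2$ to a top-level prime is governed by whether $(q^n\mp 1)_2=2$ --- is equivalent to this criterion (as your computations $(q^n+1)_2=2$ for even $n$, and $(q^n\mp1)_2=(q\mp1)_2$ for odd $n$, show), and it does yield the stated case distinction; but as written it is a structural gloss attributed to the wrong propositions, and the ``only if'' direction (no $2$-element in the torus image forces $2\nsim r$) is precisely the nontrivial content that must be imported from Proposition 4.3 rather than rederived. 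With the citation corrected, your argument coincides with the paper's.
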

\begin{proof}
Recall that
$$|B_n(q)|=|C_n(q)|=q^{n^2}(q^2-1)(q^4-1)\cdots
(q^{2n}-1)/2.$$

$(1)$ First, we examine the degree of $p$. Let $r\in
\pi(G)\setminus \{p\}$, Then, by Proposition 3.1 in \cite{vv},
$p\nsim r$ if and only if $\eta(e(r, q))=n$. Since $n$ is an even
number, equivalently, it follows that $p\nsim r$ if and only if
$e(r, q)=2n$. Hence ${\rm deg}(p)=|\pi(G)|-|R_{2n}(q)|-1$, as
claimed.

Now, we examine the degree of vertex $2$. First of all, it is
evident that the vertices $2$ and $p$ are adjacent in {\em all}
classical simple groups except ${\rm GK}(A_1(q))$, especially $2$
and $p$ are adjacent in the prime graph ${\rm GK}(G)$. On the
other hand, if $r\in \pi(G)\setminus \{2, p\}$, then from
Proposition 4.3 in \cite{vv}, we see that $r\nsim 2$ if and only
if $e(r, q)=2n$, or equivalently, $r\nsim 2$ if and only if $r\in
R_{2n}(q) $. Therefore, a simple calculation shows that  ${\rm
deg}(2)=|\pi(G)|-|R_{2n}(q)|-1$, as claimed.

$(2)$ Suppose $r\in \pi(G)\setminus\{p\}$. By Proposition 3.1 in
\cite{vv}, we see that $p\nsim r$ if and only if $e(r, q)\in \{n,
2n\}$, and so ${\rm deg}(p)=|\pi(G)|-|R_n(q)\cup R_{2n}(q)|-1$.

As we mentioned in the previous paragraph, the vertices $2$ and
$p$ are adjacent in the prime graph ${\rm GK}(G)$. Assume now
that $r\in \pi(G)\setminus \{2, p\}$. Considering Proposition 4.3
in \cite{vv}, we deduce that $2\nsim r$ if and only if
$e(r,q)=(3-e(2,q))n$. But since
$$
e(r,q)=(3-e(2,q))n \Longleftrightarrow
e(r,q)=\left\{\begin{array}{lll}
 2n &
\mbox{if} & 4\mid{q-1}, \\[0.3cm] n &  \mbox{if} &
4\mid{q+1};
\end{array} \right.  \Longleftrightarrow  \left\{\begin{array}{lll} r\in R_{2n}(q) &
\mbox{if} & 4\mid{q-1}, \\[0.3cm] r\in R_n(q) &  \mbox{if} &
4\mid{q+1};
\end{array} \right.
$$
it follows that
$$
{\rm deg}(2)=\left \{ \begin{array}{lll} |\pi(G)|-|R_{2n}(q)|-1 &
\mbox{if} & 4\mid{q-1}, \\[0.3cm] |\pi(G)|-|R_{n}(q)|-1 &  \mbox{if} &
4\mid{q+1};
\end{array} \right.
$$
as claimed. Therefore, the proof is complete.
\end{proof}
\begin{proposition}\label{D_n}
Let $G=D_n(q)$ $(n\geqslant 3)$ defined over a field of
characteristic $p\neq 2$. Then the following statements hold.
\begin{itemize}
\item[{$(1)$}]
If $n$ is even, then ${\rm deg}(p)=|\pi(G)|-|R_{n-1}(q)\cup
R_{2(n-1)}(q)|-1$, and
$$
 {\rm deg}(2)= \left\{
\begin{array}{lll}
|\pi(G)|-|R_{n-1}(q)|-1    &    \mbox{if} &  4 \mid q+1, \\[0.3cm]
|\pi(G)|-|R_{2(n-1)}(q)|-1           &     \mbox{if} &  4 \mid q-1. \\
\end{array}
\right.
$$
\item[{$(2)$}] If $n$ is odd, then ${\rm deg}(p)=|\pi(G)|-|R_{n}(q)\cup
R_{2(n-1)}(q)|-1$, and
$$
{\rm deg}(2)= \left\{
\begin{array}{lll}
|\pi(G)|-|R_{n}(q)|-1    &    \mbox{if} & 2 \parallel q-1,
\\[0.3cm]
|\pi(G)|-|R_n(q)\cup R_{2(n-1)}(q)|-1     &     \mbox{if} & 4
\parallel q-1,
\\[0.3cm]
|\pi(G)|-|R_{2(n-1)}|-1 & \mbox{if} & 8 \mid q-1.
\\
\end{array}
\right.
$$
\end{itemize}
\end{proposition}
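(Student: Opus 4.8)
The plan is to follow exactly the template of the proof of Proposition \ref{BnCn}, feeding the order of $G$ into the adjacency criteria of Vasil'ev and Vdovin. Recall first that
$$|D_n(q)|=\frac{1}{(4,\,q^n-1)}\,q^{n(n-1)}(q^n-1)\prod_{i=1}^{n-1}(q^{2i}-1),$$
so that an odd prime $r\neq p$ with $e(r,q)=k$ divides $|G|$ precisely when $k\mid n$ or $\eta(k)\leqslant n-1$; in particular $R_{2n}(q)\cap\pi(G)=\emptyset$, since $2n\nmid n$ and $\eta(2n)=n>n-1$. As in the proof of Proposition \ref{BnCn}, the vertices $2$ and $p$ are adjacent in ${\rm GK}(G)$, so in every case below the degree of each of these two vertices equals $|\pi(G)|-1$ minus the number of \emph{odd} primes $r\neq p$ that fail to be adjacent to it. The disjoint unions appearing in the statement split additively, since distinct exponents yield disjoint sets of primitive prime divisors.

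For $\deg(p)$ I would invoke Proposition 3.1 in \cite{vv}, which for type $D_n$ reads $p\nsim r\Longleftrightarrow\eta(e(r,q))\in\{n-1,n\}$. Writing $k=e(r,q)$, the solutions of $\eta(k)=n-1$ are $k\in\{n-1,2(n-1)\}$ and those of $\eta(k)=n$ are $k\in\{n,2n\}$, but only exponents $k$ with $R_k(q)\subseteq\pi(G)$ contribute. When $n$ is even, $\eta(n)=n/2\notin\{n-1,n\}$ (recall $n\geqslant 4$), so $R_n(q)$ is adjacent to $p$, while $R_{2n}(q)$ is empty; hence the non-neighbours of $p$ are exactly $R_{n-1}(q)\cup R_{2(n-1)}(q)$, giving the stated value. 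When $n$ is odd, $\eta(n)=n$ so $R_n(q)$ is a non-neighbour, $R_{2n}(q)$ is again empty, and $\eta(n-1)=(n-1)/2\notin\{n-1,n\}$ so the only other non-neighbours are $R_{2(n-1)}(q)$; this yields $\deg(p)=|\pi(G)|-|R_n(q)\cup R_{2(n-1)}(q)|-1$.

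For $\deg(2)$ I would feed the order into the adjacency criterion for the vertex $2$ in \cite{vv} (the analogue of Proposition 4.3, used above for $B_n$, $C_n$). When $n$ is even the relevant $\eta$-level is $n-1$, and the criterion reduces to $2\nsim r\Longleftrightarrow e(r,q)=(3-e(2,q))(n-1)$, exactly as in Proposition \ref{BnCn}(2): the value $2(n-1)$ occurs when $4\mid q-1$ and the value $n-1$ when $4\mid q+1$, producing the two displayed formulas. When $n$ is odd the neighbours of $2$ are governed by the $2$-parts of $q^{n}-1$ and $q^{2(n-1)}-1$ simultaneously, and the two candidate families, $R_n(q)$ (sitting at level $\eta=n$, associated with $q^n-1$) and $R_{2(n-1)}(q)$ (at level $\eta=n-1$, associated with $q^{2(n-1)}-1$), acquire \emph{different} thresholds. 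Tracking the three possibilities $(q-1)_2=2$, $(q-1)_2=4$, and $(q-1)_2\geqslant 8$ shows that $R_n(q)$ is a non-neighbour exactly when $(q-1)_2\leqslant 4$, whereas $R_{2(n-1)}(q)$ is a non-neighbour exactly when $(q-1)_2\geqslant 4$; combining these two thresholds produces the three cases in the statement.

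The routine parts are the order computation and the bookkeeping of which $R_k(q)$ lie in $\pi(G)$; these are immediate once one records $R_{2n}(q)\cap\pi(G)=\emptyset$ and the disjointness of primitive prime divisor sets. The main obstacle is the $\deg(2)$ computation for $n$ odd: there the two families $R_n(q)$ and $R_{2(n-1)}(q)$ switch adjacency status at different powers of $2$, so one must extract the precise $2$-adic conditions from the criterion in \cite{vv} and verify that the two thresholds interlock to give exactly the three regimes $2\parallel q-1$, $4\parallel q-1$, and $8\mid q-1$, rather than collapsing into the single two-way split seen for $B_n$ and $C_n$.
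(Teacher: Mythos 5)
Your proposal is correct and follows essentially the same route as the paper's proof: $\deg(p)$ is computed from Proposition 3.1 of \cite{vv} and $\deg(2)$ from Proposition 4.4 of \cite{vv} (your ``analogue of Proposition 4.3''), with the identical case analysis on $\eta(e(r,q))$ and on the $2$-part of $q-1$, yielding the same two thresholds ($R_n(q)$ non-adjacent to $2$ iff $(q-1)_2\leqslant 4$, $R_{2(n-1)}(q)$ non-adjacent iff $(q-1)_2\geqslant 4$) for $n$ odd. Your explicit bookkeeping---that $R_{2n}(q)\cap\pi(G)=\emptyset$ and that $R_n(q)$ stays adjacent to $p$ when $n$ is even since $\eta(n)=n/2$---only makes transparent a step the paper leaves implicit in passing from $\eta(e(r,q))>n-2$ to $e(r,q)\in\{n-1,2(n-1)\}$.
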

\begin{proof}
First of all, we note that $$|G|=|D_n(q)|=\frac{1}{(4,
q^n-1)}q^{n(n-1)}(q^n-1)(q^2-1)(q^4-1)\cdots(q^{2(n-1)}-1),$$ and
the fact that $2$ and $p$ are adjacent in the prime graph ${\rm
GK}(G)$.

$(1)$ Let $r\in\pi(G)\setminus \{p\}$. From Proposition 3.1 in
\cite{vv},  $r$ and $p$  are non-adjacent vertices if and only if
$\eta(e(r, q))>n-2$, or equivalently, $r\nsim p$ if and only if
$e(r, q)\in \{n-1, 2(n-1)\}$. Hence, we obtain  $${\rm
deg}(p)=|\pi(G)|-|R_{n-1}(q)\cup R_{2(n-1)}(q)|-1.$$

Now, we examine the degree of vertex $2$. Let $r\in\pi(G)\setminus
\{2, p\}$, then by Proposition 4.4 in \cite{vv}, we obtain that
$r\nsim 2$ if and only if either $e(2, q)=2$ and $r\in
R_{n-1}(q)$ or $e(2, q)=1$ and $r\in R_{2(n-1)}(q)$. Therefore,
we obtain that
$$
{\rm deg}(2)= \left\{
\begin{array}{lll}
|\pi(G)|-|R_{n-1}(q)|-1  & \mbox{if} &    4 \mid q+1,
\\[0.3cm]
|\pi(G)|-|R_{2(n-1)}(q)|-1 &     \mbox{if}     &     4 \mid q-1.\\
\end{array}
\right.
$$

$(2)$ Assume $r\in\pi(G)\setminus \{p\}$. From Proposition 3.1 in
\cite{vv}, we conclude that $r\nsim p$ if and only if $e(r, q)\in
\{n, 2(n-1)\}$. Hence, ${\rm deg}(p)=|\pi(G)|-|R_{n}(q)\cup
R_{2(n-1)}(q)|-1$.

Now, let $r\in\pi(G)\setminus \{2, p\}$. By Proposition 4.4 in
\cite{vv}, we obtain that $r\nsim 2$ if and only if either $e(r,
q)=n$ and $l\parallel {q-1}$ where $l\in \{2, 4\}$ or $e(r,
q)=2(n-1)$ and $4\mid {q-1}$. Thus, easy calculations show that
$$
 {\rm deg}(2)= \left\{
\begin{array}{lll}
|\pi(G)|-|R_{n}(q)|-1    & \mbox{if} &   2 \parallel q-1,\\[0.3cm]
|\pi(G)|-|R_n(q)\cup R_{2(n-1)}(q)|-1 & \mbox{if} &  4
\parallel q-1,\\[0.3cm]
|\pi(G)|-|R_{2(n-1)}|-1 & \mbox{if} & 8 \mid q-1. \\
\end{array}
\right.
$$
The proof of this lemma is complete.
\end{proof}
\begin{proposition}{\label{2^D_n}}
Let $G={^2D}_n(q)$ $(n\geqslant 2)$ defined over a field of
characteristic $p\neq 2$. Then the following statements hold.
\begin{itemize}
\item[{$(1)$}] If $n$ is even, then
${\rm deg}(2)=|\pi(G)|-|R_{2n}(q)|-1$, while
$${\rm deg}(p)=\left\{ \begin{array}{lll}
|\pi(G)|-|R_{2n}(q)\cup R_{2(n-1)}(q)\cup R_{n-1}(q)|-1 &
\mbox{if} &
n\geqslant 4,\\[0.3cm]
0 & \mbox{if} &  n=2. \\
\end{array} \right.
$$
\item[{$(2)$}]
If $n$ is odd, then ${\rm deg}(p)=|\pi(G)|-|R_{2n}(q)\cup
R_{2(n-1)}(q)|-1$, and also
$${\rm deg}(2)=\left\{ \begin{array}{lll}
|\pi(G)|-|R_{2n}(q)\cup R_{2(n-1)}(q)|-1 & \mbox{if} &
4\mid {q+1}, 4\parallel {q^n+1},  \\[0.3cm]
|\pi(G)|-|R_{2(n-1)}(q)|-1 & \mbox{if} & 4\mid {q+1}, 8\mid {q^n+1}, \\[0.3cm]
|\pi(G)|-|R_{2n}(q)|-1 & \mbox{if} & 4\nmid {q+1}, 2\parallel
{q^n+1}.
\\
\end{array} \right.
$$
\end{itemize}
\end{proposition}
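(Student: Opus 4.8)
The plan is to mirror the arguments of the preceding propositions, in particular Proposition \ref{D_n} for the untwisted type $D_n(q)$, replacing the untwisted order formula and adjacency criteria by the twisted ones. First I would record that
$$|{^2D}_n(q)|=\frac{1}{(4,\,q^n+1)}q^{n(n-1)}(q^n+1)(q^2-1)(q^4-1)\cdots(q^{2(n-1)}-1),$$
so that every $r\in\pi(G)\setminus\{p\}$ satisfies $e(r,q)\in\{2n\}\cup\{d : d\mid 2i \text{ for some } 1\leqslant i\leqslant n-1\}$, and in particular the primitive prime divisors contributed by the factor $q^n+1$ are exactly those in $R_{2n}(q)$. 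As in the classical cases treated above, I would also recall that $2$ and $p$ are adjacent in ${\rm GK}(G)$ whenever $G$ is a classical group other than $A_1(q)$; this adjacency is what yields the ``$-1$'' rather than ``$-2$'' in the final counts.

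For $\deg(p)$ I would invoke Proposition 3.1 of \cite{vv}, which characterises non-adjacency of an odd prime $r$ to the characteristic through a condition on $e(r,q)$ governed by the functions $\nu$ and $\eta$. Translating that condition to ${^2D}_n(q)$ should give $r\nsim p$ iff $e(r,q)\in\{n-1,\,2(n-1),\,2n\}$ when $n$ is even and $e(r,q)\in\{2(n-1),\,2n\}$ when $n$ is odd; since $2\sim p$, counting the odd non-neighbours produces the stated unions of Zsigmondy sets. The degenerate value $n=2$ would be handled separately through the exceptional isomorphism ${^2D}_2(q)\cong A_1(q^2)$, where $\mu(A_1(q^2))$ forces $\deg(p)=0$.

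For $\deg(2)$ I would appeal to the companion adjacency criterion for the prime $2$ (Proposition 4.4 in \cite{vv}), which expresses $r\nsim 2$ in terms of $e(r,q)$ together with $e(2,q)$ and the exact powers of $2$ dividing $q\pm1$ and $q^n+1$. For $n$ even this should collapse to the single rule ``$r\nsim 2$ iff $r\in R_{2n}(q)$'', giving $\deg(2)=|\pi(G)|-|R_{2n}(q)|-1$. For $n$ odd the criterion splits according to whether $4\mid q+1$ with $4\parallel q^n+1$, or $4\mid q+1$ with $8\mid q^n+1$, or $4\nmid q+1$ with $2\parallel q^n+1$; in each branch I would read off which of $R_{2n}(q)$, $R_{2(n-1)}(q)$, or their union constitute the non-neighbours of $2$.

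I expect the main obstacle to be precisely the bookkeeping for $\deg(2)$ in the odd case: distinguishing the three $2$-adic regimes above and verifying in each exactly which primitive primes of $q^n+1$ and $q^{2(n-1)}-1$ fail to join to $2$, while confirming that no spurious overlaps arise among $R_{2n}(q)$, $R_{2(n-1)}(q)$ and $R_{n-1}(q)$. A secondary care point is the boundary case $n=2$, where the general machinery degenerates and the formulas must be cross-checked against the direct $A_1(q^2)$ computation.
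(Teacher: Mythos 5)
Your proposal is correct and follows essentially the same route as the paper: the same order formula, the same appeal to Propositions 3.1 and 4.4 of \cite{vv} for non-adjacency to $p$ and to $2$ respectively, the same use of $2\sim p$ to account for the ``$-1$'', the same reduction of the even case to $r\in R_{2n}(q)$ (via $(4,q^n+1)=(q^n+1)_2$ when $n$ is even), the same three $2$-adic regimes for $n$ odd, and the same treatment of $n=2$ through ${^2D}_2(q)\cong L_2(q^2)$. No gaps beyond routine verification.
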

\begin{proof}
Again, we recall that $$|G|=|{^2D}_n(q)|=\frac{1}{(4,
q^n+1)}q^{n(n-1)}(q^n+1)(q^2-1)(q^4-1)\cdots(q^{2(n-1)}-1), \ \
n\geqslant 2.$$ Note that, for $n=2$, we have ${^2D}_2(q)\cong
L_2(q^2)$.

$(1)$ First, we investigate the degree of vertex $2$. Suppose
that $r\in \pi(G)\setminus \{2, p\}$. By Proposition 4.4 in
\cite{vv}, we observe that $r\nsim 2$ if and only if $e(r, q)=2n$
and $(4, q^n+1)=(q^n+1)_2$. However, since $q$ is odd and $n$ is
even, the equality $(4, q^n+1)=(q^n+1)_2$ is always true, and so
$r\nsim 2$ if and only if $e(r, q)=2n$. On the other hand, as we
mentioned already, the vertices $2$ and $p$ are adjacent in the
prime graph of all classical simple groups (except ${\rm
GK}(A_1(q))$), especially $2\sim p$ in ${\rm GK}(G)$.
Consequently, we obtain ${\rm deg}(2)=|\pi(G)|-|R_{2n}(q)|-1$, as
required.

In what follows, we examine the degree of $p$. Let $r\in
\pi(G)\setminus \{p\}$. Then, by Proposition 3.1 in \cite{vv},
$p\nsim r$ if and only if $\eta(e(r, q))\in \{n-1, n\}$, or
equivalently, $p\nsim r$ if and only if $e(r, q)\in \{n-1,
2(n-1), 2n\}$. Hence, we get  $${\rm deg}(p)=\left\{
\begin{array}{lll}
|\pi(G)|-|R_{2n}(q)\cup R_{2(n-1)}(q)\cup R_{n-1}(q)|-1 &
\mbox{if} &
n\geqslant 4,\\[0.3cm]
0 & \mbox{if} &  n=2. \\
\end{array} \right.
$$

$(2)$ From Proposition 3.1 in \cite{vv}, we have $p\nsim r$ if and
only if $\eta(e(r, q))\in\{n-1, n\}$, or equivalently, $p\nsim r$
if and only if $e(r, q)\in \{2(n-1), 2n\}$. Therefore $${\rm
deg}(p)=|\pi(G))|-|R_{2n}(q)\cup R_{2(n-1)}(q)|-1.$$

Now, we try to finding the degree of vertex $2$. Let
$r\in\pi(G)\setminus \{2, p\}$, then by Proposition 4.4 in
\cite{vv}, we see that $r\nsim 2$ if and only if either $e(r,
q)=2(n-1)$ and $4\mid {q+1}$ or $e(r, q)=2n$ and $(4,
q^n+1)=(q^n+1)_2$. Equivalently, $r\nsim 2$ if and only if either
$e(r, q)=2(n-1)$ and $4\mid {q+1}$ or $e(r, q)=2n$ and $8\nmid
{q^n+1}$. Therefore, we obtain
$${\rm deg}(2)=\left\{ \begin{array}{lll}
|\pi(G)|-|R_{2n}(q)\cup R_{2(n-1)}(q)|-1 & \mbox{if} &
4\mid {q+1}, 4\parallel {q^n+1},  \\[0.3cm]
|\pi(G)|-|R_{2(n-1)}(q)|-1 & \mbox{if} & 4\mid {q+1}, 8\mid {q^n+1}, \\[0.3cm]
|\pi(G)|-|R_{2n}(q)|-1 & \mbox{if} & 4\nmid {q+1}, 2\parallel
{q^n+1}.
\\
\end{array} \right.
$$
as required.
\end{proof}

\begin{proposition}\label{exceptional}
Let $G$ be a finite simple exceptional group of Lie type over a
field of characteristic $p\neq 2$. Then the following statements
hold.
\begin{itemize}
\item[$(1)$] If $G=G_2(q)$, then $\deg(p)=\deg(2)=|\pi(G)|-|R_3(q)\cup
R_6(q)|-1$.
\item[$(2)$] If $G=E_6(q)$, then $$\deg(p)=|\pi(G)|-|R_8(q)\cup
R_9(q)\cup R_{12}(q)|-1,$$ and $$\deg(2)=|\pi(G)|-|R_9(q)\cup
R_{12}(q)|-1.$$
\item[$(3)$] If $G=E_7(q)$, then
$${\rm deg}(p)=|\pi(G)|-|R_{7}(q)\cup R_{9}(q)\cup R_{14}(q)\cup
R_{18}(q)|-1,$$ and  $${\rm deg}(2)=\left\{ \begin{array}{ll}
|\pi(G)|-|R_{14}(q)\cup R_{18}(q)|-1 & \mbox{if} \ \
4\mid {q-1},\\[0.3cm]
|\pi(G)|-|R_{7}(q)\cup R_{9}(q)|-1 &   \mbox{otherwise}.
\end{array} \right.
$$
\item[$(4)$] If $G={^2E}_6(q)$, then $$\deg(p)=|\pi(G)|-|R_8(q)\cup
R_{12}(q)\cup R_{18}(q)|-1,$$ and
$$\deg(2)=|\pi(G)|-|R_{12}(q)\cup R_{18}(q)|-1.$$
\item[$(5)$] If $G=E_8(q)$, then $\deg(p)=\deg(2)=|\pi(G)|-|R_{15}(q)\cup
R_{20}(q)\cup R_{24}(q)\cup R_{30}(q)|-1$.
\item[$(6)$] If $G=F_4(q)$, then $\deg(p)=|\pi(G)|-|R_8(q)\cup
R_{12}(q)|-1$ and $\deg(2)=|\pi(G)|-|R_{12}(q)|-1$.
\item[$(7)$] If $G={^3D}_4(q)$, then $\deg(p)=\deg(2)=|\pi(G)|-|R_{12}(q)|-1$.
\end{itemize}
\end{proposition}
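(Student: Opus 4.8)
The plan is to handle all seven families by the same two-step routine already used in Propositions \ref{prop-Clasic-1}--\ref{2^D_n}: read off $\deg(p)$ from the characteristic-adjacency criterion and $\deg(2)$ from the criterion for a pair of non-characteristic primes. Throughout I would fix $q=p^f$ with $p$ odd and recall that for every exceptional $G$ in the list the order factorizes as a product of a power of $q$ with cyclotomic values $\Phi_m(q)$, so that each prime $r\in\pi(G)\setminus\{p\}$ is a primitive prime divisor of exactly one $q^m-1$. Thus $\pi(G)\setminus\{p\}$ is the disjoint union of the sets $R_m(q)$ over the relevant indices $m$. I would also record at the outset the fact, used in all the earlier propositions, that $2\sim p$ in ${\rm GK}(G)$ for every simple group of Lie type other than $A_1(q)$; this produces the uniform ``$-1$'' in every formula and guarantees that $2$ and $p$ never appear among each other's non-neighbours (indeed $e(2,q)\in\{1,2\}$ while every relevant index is at least $3$).

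For $\deg(p)$ I would invoke Proposition 3.1 of \cite{vv}, which for each exceptional type specifies exactly the set $M_p$ of indices $m$ with the property that $p\nsim r$ precisely when $e(r,q)=m$; these are the indices attached to the large maximal tori of $G$. One obtains $M_p=\{3,6\}$ for $G_2$, $\{8,9,12\}$ for $E_6$, $\{7,9,14,18\}$ for $E_7$, $\{8,12,18\}$ for ${}^2E_6$, $\{15,20,24,30\}$ for $E_8$, $\{8,12\}$ for $F_4$, and $\{12\}$ for ${}^3D_4$. Since $p$ is adjacent to $2$ and to every prime whose associated index lies outside $M_p$, one reads off
\[
\deg(p)=|\pi(G)|-\Big|\bigcup_{m\in M_p}R_m(q)\Big|-1,
\]
which is the stated value in each case.

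For $\deg(2)$ I would apply the companion criterion of \cite{vv} (the analogue for exceptional groups of Propositions 4.1--4.4 used above) governing the adjacency of the non-characteristic prime $2$ with another prime $r\ne p$. Here $e(2,q)=1$ or $2$ according as $q\equiv1$ or $3\pmod4$, and the criterion for $2\nsim r$ involves both $e(r,q)$ and the $2$-adic valuations of the factors $q^{m}\mp1$ occurring in $|G|$. This is the source of the split in the $E_7$ formula: when $4\mid q-1$ the prime $2$ is non-adjacent only to $R_{14}(q)\cup R_{18}(q)$, while when $4\mid q+1$ it is non-adjacent only to the ``halved-index'' sets $R_7(q)\cup R_9(q)$, reflecting the duality $14=2\cdot7$, $18=2\cdot9$ driven by $e(2,q)$. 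In the three cases $G_2$, $E_8$ and ${}^3D_4$ the non-adjacency sets for $2$ and for $p$ coincide identically, giving $\deg(2)=\deg(p)$; in the cases $E_6$, ${}^2E_6$ and $F_4$ the single index $8$ behaves differently, so that $2$ is adjacent to $R_8(q)$ although $p$ is not, which removes exactly $R_8(q)$ from the non-neighbour set and yields the displayed formulas.

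The bookkeeping --- matching each cyclotomic index to its torus and computing the resulting set differences --- is mechanical once the two criteria are in hand, so the only genuinely delicate point is the degree-of-$2$ analysis. The hard part will be the uniform $2$-adic computation: for each type one must decide exactly which indices $m\in M_p$ survive as non-neighbours of $2$, and this depends sensitively on $e(2,q)$ and on the power of $2$ dividing the corresponding $q^m\pm1$ (for instance $(q^4+1)_2=2$ forces the clean, $q$-independent behaviour at the index $8$, whereas the odd/even index pairs in $E_7$ swap under $q\bmod4$). Getting these valuations right is precisely what separates the groups with $\deg(2)=\deg(p)$ from those whose two degrees differ, and it is where essentially all the real effort lies.
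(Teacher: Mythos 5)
Your proposal is correct and takes essentially the same route as the paper, whose entire proof is the citation ``the assertions $(1)$--$(7)$ follow immediately from Propositions 3.2 and 4.5 of \cite{vv}''; you have simply made explicit the bookkeeping (the index sets $M_p$, the disjoint decomposition of $\pi(G)\setminus\{p\}$ into the $R_m(q)$, and the $2$-adic case analysis) that the paper leaves to the reader. The one nominal slip is citing Proposition 3.1 of \cite{vv} for $\deg(p)$: that statement covers classical groups, and for the exceptional types the relevant criteria are Propositions 3.2 and 4.5, which is exactly what your described index sets and degree-of-$2$ analysis correspond to.
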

\begin{proof}
The assertions $(1)-(7)$ follow immediately from
\cite[Propositions 3.2 and 4.5]{vv}.
\end{proof}
\begin{proposition}\label{suzuki}
Let $G$ be a finite simple Suzuki or Ree group over a field of
characteristic $p$. Then the following statements hold.
\begin{itemize}
\item[$(1)$] If $G={^2B}_2(q)$, $q=2^{2n+1}>1$, then $\deg(2)=0$.
\item[$(2)$] If $G={^2G}_2(q)$, $q=3^{2n+1}$, then $\deg(3)=1$ and
$$\deg(2)=\left\{\begin{array}{lll} |\pi(q-1)|-1 & \mbox{if} & 4\nmid
q+1,\\[0.2cm] |\pi(q^{2}-1)|-1 &  \mbox{if} & 4\mid
q+1.\\
 \end{array} \right.$$
\item[$(3)$] If $G={^2F}_4(q)$, $q=2^{2n+1}$, then
${\rm deg}(2)=|\pi(q^{4}-1)|$.
\end{itemize}
\end{proposition}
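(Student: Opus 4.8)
The plan is to proceed exactly as in Propositions \ref{prop-Clasic-1}--\ref{exceptional}: record the order of each group in factored form, list the possible values of $e(r,q)$ for the primes $r\in\pi(G)\setminus\{p\}$, and then read off the adjacencies from the Vasil'ev--Vdovin criteria in \cite{vv}. Concretely, for $\deg(p)$ I would use \cite[Proposition 3.2]{vv} (the criterion for the defining characteristic) and for $\deg(2)$, when $2\neq p$, the criterion \cite[Proposition 4.5]{vv}; counting the non-neighbours and subtracting then gives each degree. The orders needed are $|{}^2B_2(q)|=q^2(q-1)(q^2+1)$ with $q=2^{2n+1}$ and $p=2$, $\ |{}^2F_4(q)|=q^{12}(q-1)(q^3+1)(q^4-1)(q^6+1)$ with $q=2^{2n+1}$ and $p=2$, and $|{}^2G_2(q)|=q^3(q-1)(q+1)(q^2-q+1)$ with $q=3^{2n+1}$ and $p=3$.

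Part $(1)$ needs no computation: ${}^2B_2(q)$ is listed in Table 1 with $\pi_1=\{2\}$ a full connected component, so the characteristic $2$ is an isolated vertex and $\deg(2)=0$. In part $(3)$ the vertex $2$ is again the characteristic. Here I would factor $q-1$, $q+1$, $q^2+1$, $q^2-q+1$ and $q^4-q^2+1$, whose prime divisors realise the values $e(r,q)\in\{1,2,4,6,12\}$, and then apply \cite[Proposition 3.2]{vv} to check that $2\sim r$ holds precisely for $e(r,q)\in\{1,2,4\}$. These are exactly the primes dividing $q^4-1=(q-1)(q+1)(q^2+1)$; since $q$ is even, $2\nmid q^4-1$ and no self-correction is needed, giving $\deg(2)=|\pi(q^4-1)|$.

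For the Ree groups in part $(2)$ the two vertices are handled separately, and now $p=3\neq 2$. Since $q=3^{2n+1}\equiv 3\pmod 4$, the odd primes of $|G|$ realise $e(r,q)\in\{1,2,6\}$, matching $q-1$, $q+1$ and $q^2-q+1=(q-\sqrt{3q}+1)(q+\sqrt{3q}+1)$, while $e(2,q)=2$. I would read $\deg(3)$ off \cite[Proposition 3.2]{vv}; the expected answer $\deg(3)=1$ reflects that, although the involution centraliser $\langle t\rangle\times L_2(q)$ forces $6\in\omega(G)$ and hence $3\sim 2$, the characteristic meets no other prime. The value of $\deg(2)$ then comes from \cite[Proposition 4.5]{vv}, with the $2$-adic datum $e(2,q)$ — equivalently the branch on $4\mid q+1$ against $4\mid q-1$ — selecting which $R_m(q)$ is to be removed, leaving the primes dividing $q^2-1$ as the relevant neighbours.

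I expect the main obstacle to be the bookkeeping of the $2$-part, together with the precise decision of whether the edge to the characteristic is to be counted. In each branch one must compare the $2$-parts $(q-1)_2$, $(q+1)_2$ and $(q^n\pm 1)_2$ against the thresholds appearing in \cite[Propositions 3.2 and 4.5]{vv} and determine exactly which sets $R_m(q)$ are deleted; simultaneously the twisted torus factors (the $q\pm\sqrt{3q}+1$ for ${}^2G_2$, and the two factors of $q^4-q^2+1$ carrying $e=12$ for ${}^2F_4$) must be matched to the correct $e$-value so that no primitive prime divisor is omitted or double counted. This is also where the constant in the two displayed cases of $\deg(2)$ for ${}^2G_2(q)$ is pinned down, so extra care is warranted there. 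Finally, I would verify using Lemma \ref{l1} that each $R_m(q)$ named in the formulas is actually non-empty for the admissible parameters, treating the Zsigmondy exceptions and the degenerate value $q=3$ by hand.
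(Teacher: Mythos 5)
Your route is genuinely different from the paper's. The paper does not use the adjacency criteria of \cite{vv} here at all: it quotes the spectra $\mu({^2B}_2(q))$, $\mu({^2G}_2(q))$, $\mu({^2F}_4(q))$ from \cite{shi-1992}, \cite{brandl-shi} and \cite{deng-shi}, and reads the degrees directly off the lists of maximal element orders. If you insist on going through \cite{vv}, note that the results you cite, \cite[Propositions 3.2 and 4.5]{vv}, are exactly the ones this paper uses in Proposition \ref{exceptional} for $G_2$, $F_4$, $E_6$, ${^2E}_6$, $E_7$, $E_8$, ${^3D}_4$; the Suzuki and Ree families are treated by separate statements in \cite{vv}, phrased in terms of the factors $q\pm\sqrt{2q}+1$, $q\pm\sqrt{3q}+1$, $q^2\pm\sqrt{2q^3}+q\pm\sqrt{2q}+1$ rather than the sets $R_m(q)$, so your citations would need correcting. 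Modulo that, your parts $(1)$ and $(3)$ do arrive at the stated answers: $2$ is isolated in ${\rm GK}({^2B}_2(q))$, and in ${^2F}_4(q)$ the neighbours of $2$ are $\pi(q-1)\cup\pi(q+1)\cup\pi(q^2+1)=\pi(q^4-1)$, the prime $3$ causing no correction since $3\mid q+1$ when $q=2^{2n+1}$.

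Part $(2)$ contains a genuine gap, located exactly at the step you leave vague. You correctly note that $6\in\omega({^2G}_2(q))$, hence $2\sim 3$; you then assert that the neighbours of $2$ are ``the primes dividing $q^2-1$''. Both cannot stand together: $q$ is a power of $3$, so $3\nmid q^2-1$. Running the count honestly from the spectrum $\mu({^2G}_2(q))=\{6,\,9,\,q-1,\,(q+1)/2,\,q\pm\sqrt{3q}+1\}$ (the paper's own data), and using that $q=3^{2n+1}\equiv 3\pmod{4}$ makes $(q+1)/2$ even, the neighbours of $2$ are $\{3\}\cup(\pi(q-1)\setminus\{2\})\cup(\pi(q+1)\setminus\{2\})$, whence
$$\deg(2)=1+(|\pi(q-1)|-1)+(|\pi(q+1)|-1)=|\pi(q^2-1)|,$$
because $\pi(q-1)\cap\pi(q+1)=\{2\}$. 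This is one more than the value $|\pi(q^2-1)|-1$ you are trying to prove. Concretely, for $q=27$ the edges at $2$ are $\{2,3\}$, $\{2,7\}$, $\{2,13\}$, so $\deg(2)=3=|\pi(728)|$, not $2$. (The target formula is in fact inconsistent with the other half of part $(2)$: $\deg(3)=1$ forces the edge $\{2,3\}$, and the odd divisors of $q^2-1$ already contribute $|\pi(q^2-1)|-1$ further neighbours.) So a faithful execution of your own plan cannot terminate in the displayed formula; the off-by-one sits precisely in the ``edge to the characteristic'' bookkeeping that you flagged as delicate but never carried out, and a proof attempt must either exhibit the count above and correct the target, or explain away the order-$6$ elements, which the spectrum forbids. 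A smaller lapse in the same part: since $q\equiv 3\pmod 4$, the branch ``$4\nmid q+1$'' never occurs, so the case distinction you plan to import from the $2$-adic data is vacuous --- another indication that this part needs the explicit count rather than the general machinery.
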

\begin{proof} The spectra of these simple groups are known and calculated as follows:
\begin{itemize}
\item $\mu({^2B}_2(q))=\{4, q-1, q-\sqrt{2q}+1,
q+\sqrt{2q}+1\}$, where $q=2^{2n+1}>2$ (see \cite{shi-1992}).
\item
$\mu({^2\!G}_2(q))=\{6, 9, q-1, (q+1)/2, q-\sqrt{3q}+1,
q+\sqrt{3q}+1\}$, where $q=3^{2n+1}$ (see \cite{brandl-shi}).
\item
$\mu({^2\!F}_4(q))=\{12, 16, 2(q+1), 4(q-1), 4(q+\sqrt{2q}+1),
4(q-\sqrt{2q}+1), q^2-1, q^2+1, q^2-q+1, (q-1)(q+\sqrt{2q}+1),
(q-1)(q-\sqrt{2q}+1), q^2+\sqrt{2q^3}+q+\sqrt{2q}+1,
q^2-\sqrt{2q^3}+q-\sqrt{2q}+1\}$, where $q=2^{2n+1}$ (see
\cite{deng-shi}).
\end{itemize}
Now, According to the above results one can easily calculate the
desired degrees in any case.
\end{proof}
\begin{proposition}\label{full-degree} Let $G$ be a finite simple group; let $\Lambda$ be the set of vertices of the
prime graph ${\rm GK}(G)$ which are joined to all other vertices.
If $|\Lambda|\geqslant 1$, then $G$ is an alternating group
$\mathbb{A}_n$ with $n-l_n\geqslant 3$. Moreover, if
$\Theta:=\{s\in \Bbb P \ | \ s\leqslant n-l_n\}$, then
$$|\Lambda|=\left\{\begin{array}{lll} |\Theta|-1 & \mbox{if} &
n-l_n=3, \\[0.3cm] |\Theta| & \mbox{if} &
n-l_n>3. \end{array} \right.$$
\end{proposition}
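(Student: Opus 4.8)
The plan is to invoke the classification of finite simple groups and treat the alternating, sporadic and Lie type families in turn. The key preliminary remark is that every vertex of $\Lambda$ is joined to all other vertices, so the very existence of such a vertex forces ${\rm GK}(G)$ to be connected. Since $|\pi(G)|\geqslant 3$ for a (non-abelian) simple group, this means that every simple group with \emph{disconnected} prime graph automatically has $\Lambda=\emptyset$; in particular all the groups of Table 1 and the bulk of the Lie type and sporadic groups are eliminated at once. It therefore remains to look only at simple groups whose prime graph is connected, and to prove that (i) no connected sporadic or Lie type group has a full-degree vertex, while (ii) among the alternating groups exactly the $\mathbb{A}_n$ with $n-l_n\geqslant 3$ do, with the stated count.

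For the alternating groups I would use the standard description of ${\rm GK}(\mathbb{A}_n)$: two odd primes $r,s$ are adjacent iff $r+s\leqslant n$ (take disjoint $r$- and $s$-cycles, both even permutations), whereas $2\sim s$ iff $s+4\leqslant n$, since the shortest even permutation of order $2s$ is an $s$-cycle times a double transposition. Applied to the largest prime $l_n$, this shows that $l_n$ is joined to an odd prime $r$ iff $r\leqslant n-l_n$ and to $2$ iff $n-l_n\geqslant 4$; hence $l_n$ is isolated whenever $n-l_n\leqslant 2$, so $\Lambda=\emptyset$ there. Assume now $n-l_n\geqslant 3$ (then $n\geqslant 10$ and $l_n>4$). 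For an odd prime $r$ the tightest condition is adjacency to $l_n$, i.e. $r\leqslant n-l_n$; because $l_n>4$ this already forces $r\leqslant n-4$, so adjacency to $2$ is automatic, and thus $r\in\Lambda$ iff $r\leqslant n-l_n$. The vertex $2$ itself lies in $\Lambda$ iff $n-l_n\geqslant 4$. Writing $\Theta=\{s\in\Bbb P : s\leqslant n-l_n\}$, the odd members of $\Lambda$ are precisely the odd primes in $\Theta$, numbering $|\Theta|-1$; adjoining $2$ when $n-l_n>3$ yields $|\Lambda|=|\Theta|$ for $n-l_n>3$ and $|\Lambda|=|\Theta|-1$ for $n-l_n=3$, as claimed. (Connectivity for $n-l_n\geqslant 3$ is a byproduct, since then $3$ is joined to every prime.)

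For the remaining connected groups the task reduces to exhibiting, for each prime $r\mid|G|$, a prime $s$ with $r\not\sim s$. For the sporadic groups this is a finite inspection of their known prime graphs (from \cite{atlas}): in each case the largest prime divisors have a non-neighbour, so no full-degree vertex occurs. For a group of Lie type in characteristic $p$ I would first dispose of the two ``most connected'' vertices $2$ and $p$: Propositions \ref{prop-Clasic-1}--\ref{suzuki} express $\deg(2)$ and $\deg(p)$ in the form $|\pi(G)|-|R_{\ast}(q)|-1$ (or a union of such terms), and since the relevant sets of primitive prime divisors are non-empty by Zsigmondy's theorem (Lemma \ref{l1}), we get $\deg(2),\deg(p)\leqslant|\pi(G)|-2$, so neither $2$ nor $p$ belongs to $\Lambda$.

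It then remains to rule out an arbitrary remaining vertex $r$, which is necessarily a primitive prime divisor $r\in R_m(q)$ for some $m$. Here I would apply the Vasil'ev--Vdovin adjacency criteria of \cite{vv}: if $r\not\sim p$ we finish with $s=p$, and otherwise $r$ sits in a ``small'' $R_m(q)$ and is non-adjacent to the primitive prime divisors of the ``large'' cyclotomic factors $q^{m'}-1$ which already produced the defect in $\deg(p)$, supplying the required non-neighbour $s$. The main obstacle, and the bulk of the labour, is carrying out this last step uniformly across every Lie type family, together with the finitely many small fields $q$ for which Zsigmondy primes fail (the exceptional pairs of Lemma \ref{l1}) or for which $|\pi(G)|$ is too small to run the argument; these finitely many small groups must be checked individually, and several of them already appear with disconnected prime graph in Table 1 and so are handled by the connectivity reduction.
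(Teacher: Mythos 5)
Your overall route is the same as the paper's: apply the classification, observe that $|\Lambda|\geqslant 1$ forces ${\rm GK}(G)$ to be connected, and then treat the three families. The paper, however, settles two of the three cases by citation --- the alternating case is exactly Lemma 2.17 of \cite{kogani}, and for groups of Lie type it quotes from \cite{vv, vvc} the fact that every vertex of the prime graph has a non-neighbour --- and it kills the sporadic case outright because \emph{all} sporadic groups have disconnected prime graphs by \cite{w}, so no ATLAS inspection is needed at all. Your proposal replaces those citations with direct arguments, which is legitimate in principle, but as written it contains two genuine gaps.

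First, the Lie type case: you concede that ``the bulk of the labour'' is the family-by-family verification that every vertex $r\in R_m(q)$ which \emph{is} adjacent to $p$ still has a non-neighbour, and you do not carry it out. That verification is precisely the content of the statement the paper imports from \cite{vv, vvc}; as it stands, your treatment of this case is a plan rather than a proof. Second, the alternating case: your biconditional ``odd $r\in\Lambda$ iff $r\leqslant n-l_n$'' rests on the claim that adjacency to $l_n$ is the tightest constraint, but that reasoning says nothing about the vertex $r=l_n$ itself, since a vertex is not required to be adjacent to itself; without excluding $l_n\in\Lambda$ the count could be off by one. When $n-l_n=3$ the exclusion is immediate (then $l_n\not\sim 2$), but when $n-l_n\geqslant 4$ you need an odd prime $s\neq l_n$ with $n-l_n<s\leqslant n$, i.e.\ the second largest prime $p_2\leqslant n$ must satisfy $p_2+l_n>n$. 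This is true, but Bertrand's postulate alone does not give it: Bertrand only yields $p_2>l_n/2$, hence $p_2+l_n>\tfrac32 l_n>\tfrac34 n$, which need not exceed $n$. One needs a sharper prime-gap estimate (e.g.\ a Nagura-type bound guaranteeing $l_n>2n/3$, which makes the Bertrand prime in $(n-l_n,\,2(n-l_n))$ distinct from $l_n$) together with a finite check of small $n$. Both gaps are repairable --- the first simply by citing \cite{vv, vvc} as the paper does, the second by the argument just sketched --- and with those repairs your computation of $|\Lambda|$ for $\mathbb{A}_n$ correctly reproduces the stated formula and would serve as a self-contained substitute for the paper's appeal to \cite{kogani}.
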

\begin{proof} According to the classification of finite simple groups we know
that the possibilities for $G$ are: an alternating group
$\mathbb{A}_n$, with $n\geqslant 5$; one of the 26 sporadic finite
simple groups; a simple group of Lie type. We deal with the above
cases separately:
\begin{itemize}

\item If $G$ is an alternating group $\mathbb{A}_n$ with $(n\geqslant 5)$,
then the result is obtained from Lemma 2.17 in \cite{kogani}.

\item Since $|\Lambda|\geqslant 1$, the prime graph of $G$ is connected,
while the prime graph of all sporadic simple groups is
disconnected \cite{w}. Hence, $G$ is not a sporadic simple group.

\item If $G$ is a group of Lie type, then it follows from \cite{vv,
vvc} that for every $p\in \pi(G)$ there exists $q\in \pi(G)$ such
that $p\nsim q$.
\end{itemize}
The proof is complete.
\end{proof}
\begin{lm}\label{non-OD} Let $G$ and $H$ be two finite groups such that
$|H|$ divides $|G|$ and ${\rm D}(H)={\rm D}(G)$. Assume that for
each prime $p\in \pi(|G|/|H|)$, ${\rm deg}_G(p)=|\pi(G)|-1$. Then
$G$ is not OD-characterizable.
\end{lm}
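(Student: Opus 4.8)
The plan is to manufacture, from the smaller group $H$, a second group of order $|G|$ carrying the same degree pattern ${\rm D}(G)$, thereby forcing $h_{\rm OD}(G)\geqslant 2$. First I would pin down the prime sets: since ${\rm D}(H)={\rm D}(G)$ is an equality of tuples, $H$ and $G$ have the same number of prime divisors, and as $|H|$ divides $|G|$ this gives $\pi(H)=\pi(G)$. Write $\pi(H)=\pi(G)=\{p_1,\ldots,p_k\}$ and $m=|G|/|H|$, so $\pi(m)=\pi(|G|/|H|)\subseteq\pi(G)$. We may assume $m>1$ (if $m=1$ then $|H|=|G|$ and $H$ itself is a witness unless $H\cong G$). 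Comparing ${\rm D}(H)$ and ${\rm D}(G)$ coordinatewise, the hypothesis ${\rm deg}_G(p)=|\pi(G)|-1$ for $p\in\pi(m)$ transfers to $H$: each $p\in\pi(m)$ is a complete vertex of ${\rm GK}(H)$ as well.

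The heart of the construction is the claim that for \emph{every} finite group $C$ with $|C|=m$ one has ${\rm GK}(H\times C)={\rm GK}(H)$, whence ${\rm D}(H\times C)={\rm D}(H)={\rm D}(G)$ and $|H\times C|=|H|\,m=|G|$. To prove the claim I would first note $\pi(H\times C)=\pi(H)\cup\pi(m)=\pi(H)$, so the two graphs share a vertex set. For the edges, take $r,s\in\pi(H)$. If one of them, say $r$, lies in $\pi(m)$, then $r$ is complete in ${\rm GK}(H)$ by the previous paragraph, so $rs\in\omega(H)\subseteq\omega(H\times C)$ and $\{r,s\}$ is an edge of both graphs. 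If neither $r$ nor $s$ lies in $\pi(m)$, then $r,s\notin\pi(C)$; an element of $H\times C$ of order $rs$ must have trivial $C$-contribution at the primes $r$ and $s$, forcing an element of order $rs$ already inside $H$, so the edge is present in ${\rm GK}(H\times C)$ exactly when it is present in ${\rm GK}(H)$. Thus no edge is created or destroyed, establishing the claim.

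With the claim in hand, the groups $H\times C$, as $C$ runs over the isomorphism types of groups of order $m$, are candidates realizing both $|G|$ and ${\rm D}(G)$. The main obstacle is to certify that at least two isomorphism classes actually occur. When $m$ admits two non-isomorphic groups $C_1\not\cong C_2$ of order $m$, this is immediate: by the cancellation property of finite groups in direct products, $H\times C_1\cong H\times C_2$ would force $C_1\cong C_2$, so $H\times C_1$ and $H\times C_2$ are two \emph{distinct} groups of order $|G|$ with degree pattern ${\rm D}(G)$. This already yields $h_{\rm OD}(G)\geqslant 2$, and hence that $G$ is not OD-characterizable, regardless of whether either of them happens to be isomorphic to $G$.

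I expect the genuinely technical step to be the remaining case in which $m$ is a cyclic number, so that $C_m$ is the unique group of order $m$ and the construction supplies only $H\times C_m$. Here one must either show $H\times C_m\not\cong G$ outright, or produce a witness by perturbing the Sylow structure at a complete prime: since each $p\in\pi(m)$ is joined to every other vertex, there is latitude to replace a Sylow $p$-subgroup of $G$ (for instance, swapping an abelian Sylow factor for a non-isomorphic one of the same order) without altering $|G|$ or the prime graph, and verifying that some such perturbation lands outside the isomorphism class of $G$ is the crux. In the intended applications $m$ is composite and non-cyclic, so the clean cancellation argument of the previous paragraph is what does the work.
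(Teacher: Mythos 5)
Your core construction is the same one the paper uses: the paper's entire proof consists of taking $\tilde{G}=\tilde{H}\times H$ with $\tilde{H}$ nilpotent of order $m=|G|/|H|$ and asserting that these groups have the same order and degree pattern as $G$. Your careful verification of that assertion (first $\pi(H)=\pi(G)$, then the transfer of the completeness hypothesis from ${\rm GK}(G)$ to ${\rm GK}(H)$, then ${\rm GK}(H\times C)={\rm GK}(H)$ for any $C$ of order $m$) is exactly the intended argument, and it correctly shows that the paper's nilpotency assumption on $\tilde{H}$ is superfluous. Your cancellation step (Krull--Remak--Schmidt for finite groups), which produces two non-isomorphic witnesses whenever there exist two non-isomorphic groups of order $m$, goes beyond the paper: the paper never addresses the question of whether any of the constructed groups is non-isomorphic to $G$, which is what the conclusion $h_{\rm OD}(G)\geqslant 2$ actually requires.

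The gap you flag in your last paragraph is therefore real, but your dismissal of it is factually wrong, and this is the point to fix. You write that ``in the intended applications $m$ is composite and non-cyclic,'' whereas in both applications the paper makes of this lemma one has $m=2$ (with $G=\mathbb{S}_n$, $H=\mathbb{A}_n$) and $m=3$ (with $G=\mathbb{A}_{10}$, $H=J_2$); these are cyclic numbers, so your cancellation branch never fires there, and the case you leave open is the only case that occurs in practice. What closes it in those applications is an easy but necessary extra observation: $\mathbb{Z}_2\times\mathbb{A}_n$ and $\mathbb{Z}_3\times J_2$ have non-trivial center, while $\mathbb{S}_n$ (for $n\geqslant 3$) and $\mathbb{A}_{10}$ do not, so the constructed group cannot be isomorphic to $G$ and hence is a genuine second group realizing $(|G|,{\rm D}(G))$. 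In fairness, this defect is inherited from the paper: its proof is silent on non-isomorphism altogether, and the lemma as literally stated is even false in the degenerate situation $H\cong G$ (which you also caught). So your attempt is strictly more complete than the paper's proof, but as a proof of the lemma it remains incomplete in exactly the situation where $m$ is a cyclic number and $C_m\times H\cong G$ has not been excluded.
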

\begin{proof} The proof of the lemma is elementary by taking the
group(s) $$\tilde{G}=\tilde{H}\times H,$$ where $\tilde{H}$ is a
nilpotent group with order $|G|/|H|$. Indeed, these groups have
the same order and degree pattern as $G$.
\end{proof}

{\em Some Examples.} $(1)$ Let $n$ be a natural number such that
$n-l_n\geqslant 3$. Take $G=\mathbb{S}_n$ and $H=\mathbb{A}_n$.
Then ${\rm deg}_G(2)=|\pi(G)|-1$.  Therefore, one can easily
deduce that $$|\mathbb{Z}_2\times \mathbb{A}_n|=|G| \ \ \
\mbox{and} \ \ \ {\rm D}(\mathbb{Z}_2\times \mathbb{A}_n)={\rm
D}(G).$$ Hence $h_{\rm OD}(G)\geqslant 2$.

$(2)$ Let $G=\mathbb{A}_{10}$ and $H=J_2$. Then $|G|=2^7\cdot
3^4\cdot 5^2\cdot 7$ and $|H|=2^7\cdot 3^3\cdot 5^2\cdot 7$, and
${\rm deg}_G(3)=|\pi(G)|-1=4-1=3$. Again, we see that
$|\mathbb{Z}_3\times H|=|G|$ and ${\rm D}(\mathbb{Z}_3\times
H)={\rm D}(G)$, and so $h_{\rm OD}(G)\geqslant 2$. In \cite{mz2},
it was shown that $h_{\rm OD}(G)=2$.
\section{Characterizing Some Projective Special Linear Groups $L_4(q)$}
We now turn our attention to information about the projective
special linear groups $L_4(q)$. Recall that the order of these
groups are as follows:
$$|L_4(q)|=\frac{1}{(4, q-1)}q^6(q^2-1)(q^3-1)(q^4-1).$$
According to the results in \cite{k} and \cite{w}, the prime
graph associated with the simple group $L_4(q)$ is connected
except for $q=2, 3$ or $5$. It will be convenient to determine
the orders, spectra and degree patterns of projective special
linear groups $L_4(q)$, with $19 \leqslant q<40$, in Table 2.

{\small \begin{center} {\bf Table 2}. {\em The orders, spectra and
degree patterns of projective special linear groups $L_{4}(q)$.}
$$\begin{array}{ll} \hline
S & |S|, \ \ \mu(S), \ \ D(S) \\[0.1cm]
\hline \\[-0.2cm] L_4(19) & |S|= 2^7\cdot 3^7\cdot5^2\cdot {19}^6\cdot 127 \cdot 181 \\[0.1cm]
& \mu(S)=\{2^2\cdot 5\cdot 181, \ 3^3 \cdot 127, \ 2^2\cdot 3^2\cdot 5\cdot 19, \ 2^3\cdot 3^2\cdot 5\}\\[0.1cm]
& D(S)=(4, 4, 4, 3, 1, 2) \\[0.4cm]
L_4(23) & |S|=2^{9}\cdot 3^2\cdot 5\cdot 7\cdot 11^3\cdot {23}^6
\cdot  53\cdot 79 \\[0.1cm]  &  \mu(S)=\{2^3\cdot 3\cdot 5\cdot 53,
7\cdot 11\cdot 79, 2^3 \cdot 3\cdot 11\cdot 23, 2^4 \cdot 3\cdot 11\}\\[0.1cm]
& D(S)=(5, 5, 3, 2, 5, 3, 3, 2) \\[0.4cm]
L_4(25) & |S|=2^{10}\cdot  3^4\cdot 5^{12}\cdot 7\cdot {13}^2\cdot
31\cdot 313 \\[0.1cm]  & \mu(S)=\{13\cdot 313,
2\cdot 3^2\cdot 7\cdot 31, 2^2\cdot 3\cdot 5\cdot 13, 2^3\cdot 3\cdot 5\}\\[0.1cm]
& D(S)=(5, 5, 3, 3, 4, 3, 1) \\[0.4cm]
L_4(27) &  |S|=2^7\cdot 3^{18}\cdot 5\cdot 7^2\cdot {13}^3\cdot
73\cdot 757 \\[0.1cm]  & \mu(S)=\{2^2\cdot 5\cdot 7\cdot 73, 13\cdot 757,
2^2\cdot 3\cdot 7\cdot 13, 2^3\cdot 7\cdot 13, 9 \} \\[0.1cm]
& D(S)=(5, 3, 3, 5, 4, 3, 1) \\[0.4cm]
L_4(29) & |S|=2^7 \cdot 3^2\cdot 5^ 2\cdot 7^3\cdot 13 \cdot
{29}^6 \cdot 67\cdot 421\\[0.1cm]  & \mu(S)=\{3 \cdot 5 \cdot 421, 7\cdot 13\cdot
67, 2\cdot 3\cdot 5\cdot 7\cdot 29, 2^3\cdot 3 \cdot 5\cdot 7, 2^2\cdot 7\cdot 29\}\\[0.1cm]
& D(S)=(4, 5, 5, 6, 2, 4, 2, 2) \\[0.4cm]
L_4(31) & |S|=2 ^{13}\cdot 3^4 \cdot 5^3 \cdot 13\cdot {31}^6
\cdot 37 \cdot 331\\[0.1cm]  &
\mu(S)=\{2 ^5\cdot 13 \cdot 37,  3^2\cdot 5\cdot 331, 2^5\cdot 3\cdot 5 \cdot 31, 2^6\cdot 3\cdot 5\} \\[0.1cm]
& D(S)=(5, 4, 4, 2, 3, 2, 2) \\[0.4cm]
L_4(32) & |S|=2^{30}\cdot 3^2 \cdot 5^2 \cdot 7\cdot {11}^2\cdot
{31}^3 \cdot 41\cdot 151\\[0.1cm] &
\mu(S)=\{3\cdot 5^2 \cdot 11 \cdot 41, 7\cdot 31\cdot 151, 2\cdot 3\cdot 11\cdot 31, 3\cdot 11\cdot 31\}\\[0.1cm]
& D(S)=(3, 5, 3, 2, 5, 5, 3, 2) \\[0.4cm]
L_4(37)  & |S|=2^7 \cdot 3^7 \cdot 5 \cdot 7 \cdot {19}^2\cdot
{37}^6\cdot 67\cdot 137\\[0.1cm] &
\mu(S)=\{5\cdot 19\cdot 137,  3^3\cdot 7 \cdot 67,  2\cdot
3^2\cdot
19\cdot 37, 2^3\cdot 3^2\cdot 19, 2^2\cdot 3^2\cdot 37\} \\[0.1cm]
& D(S)=(3, 5, 2, 2, 5, 3, 2, 2) \\ \hline
\end{array}$$
\end{center}}
In the case of a generic group $G$, it is sometimes convenient to
represent the prime graph ${\rm GK}(G)$ in a compact form. By the
compact form we mean a graph whose vertices are displayed with
disjoint subsets of $\pi(G)$. Actually a vertex labeled $U$
represents the complete subgraph of ${\rm GK}(G)$ on $U$
vertices. An edge connecting $U$ and $W$ represents the set of
edges of ${\rm GK}(G)$ that connect each vertex in $U$ with each
vertex in $W$. The figures $1$-$8$ below, depict the compact form
of the prime graph of the projective special linear groups
$L_4(q)$, for $19\leqslant q\leqslant 37$.

\setlength{\unitlength}{4mm}
\begin{picture}(14,17)
\linethickness{0.3pt} %
\put(8,13){\circle*{0.5}}
\put(13,13){\circle*{0.5}}
\put(10.5,10.5){\circle*{0.5}}
\put(15.5,10.5){\circle*{0.5}}
\put(5.5,10.5){\circle*{0.5}}
\put(5.5,10.5){\line(1,1){2.5}}
\put(8,13){\line(1,0){5}}
\put(13,13){\line(1,-1){2.5}}
\put(8,13){\line(1,-1){2.5}}
\put(10.5,10.5){\line(1,1){2.5}}
\put(5,9.4){\small$127$}%
\put(7.5,13.7){\small$3$}%
\put(12.5,13.7){\small$2, 5$}%
\put(15,9.4){\small$181$}%
\put(10.3,9.4){19}%
\put(6,7.5){{\small \bf Fig. 1.} \ \ ${\rm GK}(L_4(19))$.}
\put(29,13){\circle*{0.5}}
\put(32,13){\circle*{0.5}}
\put(29,10){\circle*{0.5}}
\put(32,10){\circle*{0.5}}
\put(34.2,10.8){\circle*{0.5}}
\put(26.8,10.8){\circle*{0.5}}
\put(26.8,10.8){\line(1,1){2.2}}
\put(29,13){\line(1,0){3}}
\put(32,13){\line(1,-1){2.1}}
\put(29,13){\line(1,-1){3}}
\put(29,10){\line(1,1){3}}
\put(29,10){\line(1,0){3}}
\put(29,10){\line(0,0){3}}
\put(32,10){\line(0,1){3}}
\put(25.5,9.5){\small$13, 67$}%
\put(28.8,13.7){\small$7$}%
\put(31.5,13.7){\small$3, 5$}%
\put(33.7,9.5){\small$421$}%
\put(28.8,8.8){$2$}%
\put(31.7,8.8){$29$}%
\put(26,7){{\small \bf Fig. 5.} \ \ ${\rm GK}(L_4(29))$.}
\put(8,4){\circle*{0.5}}
\put(13,4){\circle*{0.5}}
\put(10.5,1.5){\circle*{0.5}}
\put(15.5,1.5){\circle*{0.5}}
\put(5.5,1.5){\circle*{0.5}}
\put(5.5,1.5){\line(1,1){2.5}}
\put(8,4){\line(1,0){5}}
\put(13,4){\line(1,-1){2.5}}
\put(8,4){\line(1,-1){2.5}}
\put(10.5,1.5){\line(1,1){2.5}}
\put(4.5,0.3){\small$7, 79$}%
\put(7.5,4.7){\small$11$}%
\put(12.5,4.7){\small$2, 3$}%
\put(15,0.3){\small$5, 53$}%
\put(10.1,0.3){$23$}%
\put(6,-1.5){{\small \bf Fig. 2.} \ \ ${\rm GK}(L_4(23))$.}
\linethickness{0.3pt} %
\put(28,4){\circle*{0.5}}
\put(33,4){\circle*{0.5}}
\put(30.5,1.5){\circle*{0.5}}
\put(35.5,1.5){\circle*{0.5}}
\put(25.5,1.5){\circle*{0.5}}
\put(25.5,1.5){\line(1,1){2.5}}
\put(28,4){\line(1,0){5}}
\put(33,4){\line(1,-1){2.5}}
\put(28,4){\line(1,-1){2.5}}
\put(30.5,1.5){\line(1,1){2.5}}
\put(24.7,0.3){\small$331$}%
\put(27.3,4.7){\small$3, 5$}%
\put(32.9,4.7){\small$2$}%
\put(34.5,.3){\small$13, 37$}%
\put(30.3,.3){31}%
\put(26,-2){{\small \bf Fig. 6.} \ \ ${\rm GK}(L_4(31))$.}
\end{picture}

\vspace{3cm}

\setlength{\unitlength}{4mm}
\begin{picture}(0,12)
\linethickness{0.3pt} %
\put(8,15){\circle*{0.5}}
\put(13,15){\circle*{0.5}}
\put(10.5,12.5){\circle*{0.5}}
\put(15.5,12.5){\circle*{0.5}}
\put(5.5,12.5){\circle*{0.5}}
\put(5.5,12.5){\line(1,1){2.5}}
\put(8,15){\line(1,0){5}}
\put(13,15){\line(1,-1){2.5}}
\put(8,15){\line(1,-1){2.5}}
\put(10.5,12.5){\line(1,1){2.5}}
\put(5,11.4){\small$7, 31$}%
\put(7.5,15.7){\small$2, 3$}%
\put(12.5,15.7){\small$13$}%
\put(15,11.4){\small$313$}%
\put(10.3,11.4){5}%
\put(6,9){{\small \bf Fig. 3.} \ \ ${\rm GK}(L_4(25))$.}
\put(28,15){\circle*{0.5}}
\put(33,15){\circle*{0.5}}
\put(30.5,12.5){\circle*{0.5}}
\put(35.5,12.5){\circle*{0.5}}
\put(25.5,12.5){\circle*{0.5}}
\put(25.5,12.5){\line(1,1){2.5}}
\put(28,15){\line(1,0){5}}
\put(33,15){\line(1,-1){2.5}}
\put(28,15){\line(1,-1){2.5}}
\put(30.5,12.5){\line(1,1){2.5}}
\put(24.5,11.4){\small$7, 151$}%
\put(27.5,15.7){\small$31$}%
\put(32.5,15.7){\small$3, 11$}%
\put(35,11.4){\small$5, 41$}%
\put(30.4,11.4){$2$}%
\put(26,9.5){{\small \bf Fig. 7.} \ \ ${\rm GK}(L_4(32))$.}
\put(8,6){\circle*{0.5}}
\put(13,6){\circle*{0.5}}
\put(10.5,3.5){\circle*{0.5}}
\put(15.5,3.5){\circle*{0.5}}
\put(5.5,3.5){\circle*{0.5}}
\put(5.5,3.5){\line(1,1){2.5}}
\put(8,6){\line(1,0){5}}
\put(13,6){\line(1,-1){2.5}}
\put(8,6){\line(1,-1){2.5}}
\put(10.5,3.5){\line(1,1){2.5}}
\put(5,2.3){\small$757$}%
\put(7.5,6.7){\small$13$}%
\put(12.5,6.7){\small$2, 7$}%
\put(15,2.3){\small$5, 73$}%
\put(10.3,2.3){$3$}%
\put(6,0.5){{\small \bf Fig. 4.} \ \ ${\rm GK}(L_4(27))$.}
\put(29,6){\circle*{0.5}}
\put(32,6){\circle*{0.5}}
\put(29,3){\circle*{0.5}}
\put(32,3){\circle*{0.5}}
\put(34.2,3.8){\circle*{0.5}}
\put(26.8,3.8){\circle*{0.5}}
\put(26.8,3.8){\line(1,1){2.2}}
\put(29,6){\line(1,0){3}}
\put(32,6){\line(1,-1){2.1}}
\put(29,3){\line(1,1){3}}
\put(29,6){\line(1,-1){3}}
\put(29,3){\line(1,0){3}}
\put(29,3){\line(0,0){3}}
\put(32,3){\line(0,1){3}}
\put(26,2.7){\small $7, 67$}%
\put(28.8,6.5){\small $3$}%
\put(31.5,6.5){\small $19$}%
\put(33.5,2.7){\small $5, 137$}%
\put(28.8,1.8){$2$}%
\put(31.7,1.8){$37$}%
\put(26,0.5){{\small \bf Fig. 8.}  \ ${\rm GK}(L_4(37))$.}
\end{picture}
\begin{lm} \label{lem2.3} (\cite{za})
Let $P$ be a finite non-abelian simple group.
\begin{itemize}
\item[$(1)$] If $181\in \pi(P)\subseteq \{2, 3, 5, 19, 127, 181\}$, then $P$
is isomorphic to one of the following simple groups: $L_2(19^2)$,
$S_4(19)$ or $L_4(19)$.

\item[$(2)$] If $79\in \pi(P)\subseteq \{2, 3, 5, 7, 11, 23, 53, 79\}$,
then $P$ is isomorphic to one of the following simple groups:
$L_3(23)$ or $L_4(23)$.

\item[$(3)$] If $313\in \pi(P)\subseteq \{2, 3, 5, 7, 13, 31, 313\}$,
then $P$ is isomorphic to one of the following simple groups:
$L_2(5^4)$, $S_4(5^2)$, ${^2D}_4(5)$ or $L_4(5^2)$.

\item[$(4)$] If $757\in \pi(P)\subseteq \{2, 3, 5, 7, 13, 73, 757\}$,
then $P$ is isomorphic to one of the following simple groups:
$L_3(27)$ or  $L_4(27)$.

\item[$(5)$] If $421\in \pi(P)\subseteq \{2, 3, 5, 7, 13, 29, 67, 421\}$,
then $P$ is isomorphic to one of the following simple groups:
$L_2(29^2)$, $S_4(29)$ or $L_4(29)$.

\item[$(6)$] If $331\in \pi(P)\subseteq \{2, 3, 5, 13, 31, 37, 331\}$,
then $P$ is isomorphic to one of the following simple groups:
$L_3(31)$ or $L_4(31)$.

\item[$(7)$] If $151\in \pi(P)\subseteq \{2, 3, 5, 7, 11, 31, 41,
151\}$, then $P$ is isomorphic to one of the following simple
groups:  $L_3(32)$ or $L_4(32)$.

\item[$(8)$] If $137\in \pi(P)\subseteq \{2, 3, 5, 7, 19, 37, 67,
137\}$, then $P$ is isomorphic to one of the following simple
groups:  $L_2(37^2)$, $S_4(37)$ or $L_4(37)$.
\end{itemize}
\end{lm}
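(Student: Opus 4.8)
The plan is to establish all eight statements by a single uniform argument based on the classification of finite simple groups; I describe it for part $(1)$ (the distinguished prime $\ell=181$ with admissible set $S=\{2,3,5,19,127,181\}$), the remaining seven parts being identical once $\ell$ and $S$ are replaced by the corresponding data. The key structural observation is that in each case $S$ is exactly $\pi(L_4(q))$ for the relevant $q$, and $\ell$ is its largest prime, which is a primitive prime divisor of $q^4-1$.

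First I would dispatch the two easy families. If $P=\mathbb{A}_n$, then $\ell\mid|P|$ forces $n\geqslant\ell$, so $\pi(P)$ contains every prime not exceeding $n$; but in each part there is a prime below $\ell$ lying outside $S$ (for part $(1)$ the prime $7$), contradicting $\pi(P)\subseteq S$. Thus no alternating group survives. Sporadic groups are eliminated immediately, since the largest prime dividing the order of any sporadic simple group is $71$, whereas every distinguished prime here satisfies $\ell\geqslant 79>71$; hence $\ell\nmid|P|$ for sporadic $P$.

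The substantive case is $P$ of Lie type over $\mathrm{GF}(p^f)$, treated via the order formula $|P|=\frac1d\,q^N\prod_i(q^{m_i}\pm1)$ together with Zsigmondy's theorem (Lemma \ref{l1}). Each factor $q^{m_i}-1$ contributes, apart from the listed exceptions, a primitive prime divisor which must already lie in the finite set $S$; since such primitive primes grow with $m_i$ and with the field degree $f$, the requirement $\pi(P)\subseteq S$ simultaneously bounds the Lie rank and $f$, leaving only finitely many candidate pairs (type, $q$). I would pin these down by first using $\ell$ itself: either $\ell=p$ is the characteristic, or $\ell$ is a primitive prime divisor of $q^m-1$ with $m=e(\ell,q)$, and in either event the admissibility of the accompanying Zsigmondy primes forces $q$ to be a small power of one of the primes of $S$. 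Running through the bounded list of types and field sizes and discarding those producing a prime outside $S$ (for instance $L_2(181)$ fails because $7,13\in\pi(L_2(181))$ but $7\notin S$), one is left precisely with $L_2(19^2)$, $S_4(19)$ and $L_4(19)$, each of which one checks directly to satisfy $181\in\pi(P)\subseteq\{2,3,5,19,127,181\}$.

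The main obstacle is this Lie-type enumeration: one must argue rigorously that $\pi(P)\subseteq S$ caps both the rank and the field degree $f$, so that only finitely many groups remain, and then verify each surviving candidate against its order and spectrum. I would organize the computation by handling the high-rank classical families and the exceptional groups first (where the abundance of cyclotomic factors $q^{m_i}\pm1$ forces a Zsigmondy prime outside $S$), so that only the small-rank families $A_1$, $A_2$, $A_3$, $C_2$ and ${^2D}_4$ over fields of the relevant characteristic remain to be checked by hand.
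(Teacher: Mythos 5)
Your two easy reductions are fine: the prime\mbox{-}gap argument does eliminate alternating groups (in each part some prime below $\ell$ is missing from $S$), and no sporadic group has a prime divisor exceeding $71<79\leqslant\ell$. The genuine gap is that the Lie-type case --- which is the entire content of the lemma --- is never actually carried out. Your Zsigmondy bound only says that a primitive prime divisor of $p^{fm_i}-1$ is congruent to $1$ modulo $fm_i$, so $\pi(P)\subseteq S$ yields $fm_i<\max S\leqslant 757$; this leaves hundreds of candidate pairs of (type, $q$) in each of the eight parts, every one of which must then be excluded by an explicit order computation. You name this ``the main obstacle'' and then simply assert its resolution (``one is left precisely with $L_2(19^2)$, $S_4(19)$ and $L_4(19)$''), which is a statement of the desired conclusion, not a proof of it. Moreover, your claimed reduction of the hand-check to the families $A_1$, $A_2$, $A_3$, $C_2$ and ${}^2D_4$ is not correct: as the paper's own Table 5 shows, the groups all of whose prime divisors lie below the relevant bound also include unitary groups such as $U_3(49)$ and $U_4(19)$, the groups $S_6(19)$, $O_7(19)$, $G_2(49)$ and ${}^3D_4(7)$ (for part $(1)$), and for part $(6)$ even $E_8(2)$, $L_6(64)$ and $S_{12}(8)$; each of these survives the coarse rank/field-degree filter and is eliminated only by verifying that some specific prime (for instance $7\in\pi(E_8(2))$ while $7\notin\{2,3,5,13,31,37,331\}$) lies outside $S$.

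The paper itself does none of this enumeration: Lemma \ref{lem2.3} is stated as a citation to \cite{za}, where all non-abelian finite simple groups whose prime divisors do not exceed $1000$ are classified. Since in each part the distinguished prime $\ell$ is exactly $\max S$, the candidate groups are precisely those $P$ with $\max\pi(P)=\ell$; the paper extracts these from Zavarnitsine's tables into its Table 5 and reads off each statement by inspecting which entries satisfy $\pi(P)\subseteq S$. So what you propose is, in effect, a re-derivation of Zavarnitsine's classification; to turn your outline into a proof you would either have to complete that long computation rigorously (including the unitary, orthogonal and exceptional candidates you set aside), or do what the paper does and invoke \cite{za} directly.
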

\begin{proof} In \cite{za}, the non-abelian finite
simple groups with prime divisors not exceeding 1000 are
determined. Given a prime $p$, we denote by $\mathcal{S}_p$ the
set of non-abelian finite simple groups $P$ such that $\max
\pi(P)=p$. Using Tables 1 and 3 in \cite{za}, we have listed the
non-abelian simple groups (except alternating ones) and their
orders in $\mathcal{S}_p$ for $p\in \{79, 137, 151, 181, 313,
331, 421, 757\}$ (see Table 5 at the end of this article). Now,
all statements $(1)$ to $(8)$ follow directly from Table 5.
\end{proof}
\begin{lm}\label{outer-prime}\cite[Lemma 2.8]{kogani} Let $P$ be a simple group and $p$ be a
prime such that $p\geqslant \max \pi(P)$. Then $p\notin \pi({\rm
Out}(P))$. \end{lm}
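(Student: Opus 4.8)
The plan is to recast the assertion in a form convenient for a case analysis. Observe first that, for a fixed prime $p$ with $p\geqslant \max\pi(P)$, we have $p\in\pi({\rm Out}(P))$ if and only if some prime divisor $r$ of $|{\rm Out}(P)|$ satisfies $r\geqslant \max\pi(P)$. Hence the lemma is equivalent to the single inequality
$$\max\pi({\rm Out}(P))<\max\pi(P),$$
the statement being vacuous when ${\rm Out}(P)=1$. So it suffices to bound the largest prime dividing $|{\rm Out}(P)|$, and I would do this by running through the classification of finite simple groups, using the known description of ${\rm Out}(P)$ (see \cite{atlas}).

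The two easy families come first. If $P=\mathbb{A}_n$ with $n\geqslant 5$, then $|{\rm Out}(P)|$ divides $4$ (with $\pi({\rm Out}(\mathbb{A}_6))=\{2\}$ as well), and if $P$ is sporadic then $|{\rm Out}(P)|\in\{1,2\}$. In both situations $\pi({\rm Out}(P))\subseteq\{2\}$, while $\max\pi(P)\geqslant 5$, so the reduced inequality is immediate.

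The substantive case is $P$ of Lie type over ${\rm GF}(q)$ with $q=p_0^f$ and defining characteristic $p_0$. Here $|{\rm Out}(P)|=d\cdot f\cdot g$, where $g$ is the order of the graph-automorphism group, so $\pi(g)\subseteq\{2,3\}$; $f$ is the order of the group of field automorphisms; and $d=|{\rm Outdiag}(P)|$ divides a small gcd of the shape $(n,q\mp 1)$ (or $(4,q^n\mp 1)$ for the orthogonal families), whence every prime dividing $d$ is at most a rank-type bound $m$ and also divides $q\mp 1$. To dominate all three contributions at once I would apply Zsigmondy's theorem (Lemma \ref{l1}) in the base $p_0$: let $m$ be the largest degree occurring in the factorisation of $|P|$, so that $q^m-1=p_0^{fm}-1$ (for twisted types one replaces $q^m-1$ by the relevant $q^m+1$, equivalently a base-$p_0$ factor $p_0^{2fm}-1$) divides $|P|$. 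A primitive prime divisor $s$ of $p_0^{fm}-1$ has $e(s,p_0)=fm$, hence $fm\mid s-1$ and $s\geqslant fm+1$. Since $s\mid|P|$ we obtain $\max\pi(P)\geqslant s>f$ and $s>m$, which dominates the field and diagonal contributions, while the graph contribution (the prime $2$ or $3$) is likewise dominated because $\max\pi(P)\geqslant 5$ for every Lie-type simple group apart from a short list of small ones. The twisted groups ${^2A}_n$, ${^2D}_n$, ${^3D}_4$, ${^2E}_6$ and the Suzuki--Ree families are treated identically, reading the field-automorphism order from $q$ and using the base-$p_0$ primitive prime divisor of the appropriate $p_0^{2fm}-1$.

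The main obstacle is the finite list of exceptions one must clear by hand: the Zsigmondy-exceptional data $(p_0,fm)\in\{(2,1),(3,1),(2,6)\}$ together with the case $fm=2$ and $p_0+1$ a power of $2$, and the handful of small groups (notably the rank-one groups $L_2(q)$ and the smallest Lie-type groups) where $\max\pi(P)$ is itself small. For each of these finitely many configurations the inequality $\max\pi({\rm Out}(P))<\max\pi(P)$ is verified directly from the explicit orders of $P$ and of ${\rm Out}(P)$. Once these are dispatched, the Zsigmondy bound covers all remaining families uniformly and completes the proof.
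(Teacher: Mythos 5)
Your proposal is essentially correct, but there is nothing in the paper to compare it against: the paper does not prove this lemma at all, it imports it verbatim by citation from \cite[Lemma 2.8]{kogani}. Judged on its own merits, your argument follows the natural (and, as far as one can tell, the cited source's) route: reduce the claim to the inequality $\max\pi({\rm Out}(P))<\max\pi(P)$, kill the alternating and sporadic cases by $\pi({\rm Out}(P))\subseteq\{2\}$, and for Lie type over ${\rm GF}(p_0^f)$ write $|{\rm Out}(P)|=d\cdot f\cdot g$ and dominate each factor by a primitive prime divisor $s$ of $p_0^{fm}-1$ (or $p_0^{2fm}-1$ in the twisted cases), using $s\equiv 1 \pmod{fm}$, hence $s\geqslant fm+1>\max(f,m)$; this indeed controls both the field part (every prime dividing $f$ is at most $f$) and the diagonal part (every prime dividing $d$ is bounded by the rank-type bound, which is at most $m$). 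Two small remarks. First, your opening ``if and only if'' is stated for a \emph{fixed} $p$, where only one implication is literally true; what you actually need and use is just the equivalence of the lemma with $\max\pi({\rm Out}(P))<\max\pi(P)$, which is correct. Second, the graph contribution requires no list of exceptions at all: by Burnside's $p^aq^b$-theorem every non-abelian simple group has at least three distinct prime divisors, so $\max\pi(P)\geqslant 5>3$ holds universally. The residual hand-checking you flag is genuinely finite and routine --- note that $fm=1$ never occurs (the smallest degree for a simple group is $m=2$), the $f=1$ groups have trivial field and diagonal contributions anyway, and the Zsigmondy-exceptional survivors such as $L_3(4)$ (with $|{\rm Out}|=12$ against $\max\pi=7$) and $L_2(8)$ (with $|{\rm Out}|=3$ against $7$) pass immediately. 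So your write-up, once those checks are spelled out, supplies a complete proof of a statement the paper merely quotes.
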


\begin{lm}\label{inK} Let $G$ be a finite group, with
$|G|=p_1^{m_1}p_2^{m_2}\cdots p_s^{m_s}$, where $s, m_1,
m_2,\ldots, m_s$ are positive integers and $p_1, p_2, \ldots,
p_s$ distinct primes. Let $\Delta=\{p_i\in \pi(G) \ | \ m_i=1
\}$, and for each prime $p_i\in \Delta$, let
$\Delta(p_i)=\{p_j\in \Delta \ | \ j\neq i, \ p_j\nmid p_i-1
\mbox{and} \ p_i\nmid p_j-1\}$. Let $K$ be a normal slovable
subgroup of $G$. Then there hold.
\begin{itemize}
\item[$(1)$] If $p_i\in \Delta$ divides the
order of $K$, then for each prime $p_j\in \Delta(p_i)$,  $p_j\sim
p_i$ in ${\rm GK}(G)$. In particular, $\deg_G(p_i)\geqslant
|\Delta(p_i)|$.

\item[$(2)$] If $|\Delta|=s$ and for all $i=1, 2, \ldots, s$, $|\Delta(p_i)|=s-1$, then
$G$ is a cyclic group of order $|G|$ and ${\rm D}(G)=(s-1, s-1,
\ldots, s-1)$.
\end{itemize}
\end{lm}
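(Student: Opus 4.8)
The plan is to analyse the Sylow $p_i$-subgroup $P_i$ of $G$, which for $p_i\in\Delta$ is cyclic of order $p_i$, together with its centraliser. Throughout I record that an edge $p_i\sim p_j$ in ${\rm GK}(G)$ is the same as an element of order $p_ip_j$, and that, since every element of order $p_i$ generates a (cyclic) Sylow $p_i$-subgroup and all of these are conjugate, such an element exists precisely when $p_j\mid |C_G(P_i)|$: a $p_j$-element $y$ of $C_G(P_i)$ commutes with a generator $x$ of $P_i$, whence $\langle x\rangle\times\langle y\rangle\cong\mathbb{Z}_{p_ip_j}$. So for part $(1)$ the whole task reduces to showing $p_j\mid |C_G(P_i)|$ for every $p_j\in\Delta(p_i)$.

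For this I first note that, since $p_i\mid |K|$ and $p_i\parallel |G|$, $P_i$ is a Sylow $p_i$-subgroup of the normal subgroup $K$ as well, and all Sylow $p_i$-subgroups of $G$ lie in $K$. Now I distinguish whether $p_j$ divides $|K|$. If $p_j\mid |K|$, then $p_j\parallel |K|$ (as $p_j\in\Delta$), and I invoke P.\ Hall's theorem in the solvable group $K$ to obtain a Hall $\{p_i,p_j\}$-subgroup $H\le K$ of order $p_ip_j$; the two arithmetic conditions $p_j\nmid p_i-1$ and $p_i\nmid p_j-1$ defining $\Delta(p_i)$ force both Sylow numbers of $H$ to equal $1$, so $H$ is cyclic and $p_j\mid |C_K(P_i)|\le |C_G(P_i)|$. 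If instead $p_j\nmid |K|$, I apply the Frattini argument $G=K\,N_G(P_i)$ to get $|G:N_G(P_i)|=|K:N_K(P_i)|$, a divisor of $|K|$; hence $p_j\nmid |G:N_G(P_i)|$, and since $p_j\parallel |G|$ this yields $p_j\mid |N_G(P_i)|$. Because $N_G(P_i)/C_G(P_i)$ embeds in ${\rm Aut}(P_i)\cong\mathbb{Z}_{p_i-1}$ and $p_j\nmid p_i-1$, the index $|N_G(P_i):C_G(P_i)|$ is prime to $p_j$, so $p_j\mid |C_G(P_i)|$. Either way $p_i\sim p_j$, and as the primes of $\Delta(p_i)$ are distinct neighbours of $p_i$ we get $\deg_G(p_i)\ge |\Delta(p_i)|$. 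The genuinely delicate step, which I expect to be the main obstacle, is this second case: one must leave $K$ and combine the Frattini argument with the $N/C$-theorem, and it is here that the hypothesis $p_j\nmid p_i-1$ is used in an essential way.

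For part $(2)$, $|\Delta|=s$ means $|G|$ is squarefree, so every Sylow subgroup of $G$ is cyclic of prime order and $G$ is solvable; I therefore apply part $(1)$ with $K=G$, noting that then every prime divides $|K|$ and that $|\Delta(p_i)|=s-1$ says every $p_j\neq p_i$ lies in $\Delta(p_i)$. Hence $\deg_G(p_i)\ge s-1$ for every $i$, and as no vertex of a graph on $s$ vertices can have degree exceeding $s-1$, we conclude $\deg_G(p_i)=s-1$ for all $i$; thus ${\rm GK}(G)=K_s$ and ${\rm D}(G)=(s-1,\ldots,s-1)$. Finally, the first case of part $(1)$ (applied with $K=G$) in fact places a $p_j$-element in $C_G(P_i)$ for every $j\neq i$, so $p_j\mid |C_G(P_i)|$ for all $j\neq i$, while $p_i\mid |C_G(P_i)|$ trivially; since $|G|$ is squarefree this forces $|G|\mid |C_G(P_i)|$, i.e.\ $C_G(P_i)=G$ and $P_i\le Z(G)$. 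As this holds for each $i$ and the $P_i$ generate $G$, the group $G$ is abelian, and an abelian group of squarefree order is cyclic. This completes the plan.
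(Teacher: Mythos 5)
Your proof is correct. For part $(1)$ you follow essentially the same route as the paper: the paper likewise splits into the cases $p_j\in\pi(K)$ (where it simply asserts that the solvable group $K$ contains a cyclic subgroup of order $p_ip_j$ --- exactly your Hall-subgroup-plus-Sylow-counting argument, left implicit there) and $p_j\notin\pi(K)$ (where it uses the Frattini argument $G=KN_G(P)$ and then declares $P\langle x\rangle$ to be cyclic of order $p_ip_j$, which is precisely your $N/C$-theorem step exploiting $p_j\nmid p_i-1$, again left implicit); your write-up makes explicit the details the paper compresses into ``clearly''. The genuine divergence is in part $(2)$: the paper disposes of it with a citation to Rose's book (the classical criterion that every group of order $n$ is cyclic when $n$ is coprime to $\varphi(n)$), whereas you derive it from part $(1)$ itself. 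Taking $K=G$ --- legitimate once you invoke the classical fact that a group of squarefree order, equivalently one with all Sylow subgroups cyclic, is solvable --- you conclude that every Sylow subgroup is central, hence $G$ is abelian and therefore cyclic. This buys a self-contained proof in place of a citation, at the price of one appeal to the H\"older--Burnside theorem on groups with cyclic Sylow subgroups; note that this solvability input is genuinely needed in your argument, since part $(1)$ is stated only for solvable normal subgroups $K$.
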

\begin{proof} $(1)$ If $p_j\in \pi(K)$, then $K$ contains a cyclic
subgroup of order $p_ip_j$, and the result is proved. Hence, we
may assume that $p_j\notin \pi(K)$. Let $P$ be a Sylow
$p_i$-subgroup of $K$. Then $G=KN_G(P)$ by Frattini argument, and
so $N_G(P)$ contains an element of order $p_j$, say $x$. Clearly
$P\langle x \rangle$ is a cyclic subgroup of $G$ of order
$p_ip_j$, and hence $p_i\sim p_j$ in ${\rm GK}(G)$.

$(2)$ This is a well-known result, see for example \cite[Result
1.4]{rose}. \end{proof}

\begin{theorem}[Theorem 1, \cite{vg}]\label{thm-1111}
Let $G$ be a finite group with $t(G)\geqslant 3$ and
$t(2,G)\geqslant 2$, and let $K$ be the maximal normal solvable
subgroup $K$ of $G$. Then the quotient group $G/K$ is an almost
simple group, i.e., there exists a finite non-abelian simple
group $S$ such that $S\leqslant G/K\leqslant {\rm Aut}(S)$.
\end{theorem}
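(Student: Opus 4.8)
The plan is to reduce the assertion to the single claim that the socle of $\bar G:=G/K$ has exactly one simple factor, and then to use the two independence hypotheses to force this. The first step is to observe that $t(G)\geqslant 3$ makes $G$ non-solvable. Indeed, let $\{p,q,r\}$ be an independent set of size $3$ in ${\rm GK}(G)$ and suppose, for contradiction, that $G$ is solvable. Then $G$ possesses a solvable Hall $\{p,q,r\}$-subgroup $H$ with $\pi(H)=\{p,q,r\}$; since $\omega(H)\subseteq\omega(G)$ and $\{p,q,r\}$ is a coclique, ${\rm GK}(H)$ consists of three isolated vertices, so $s(H)=3$. But a solvable group with disconnected prime graph falls into case $(1)$ or $(2)$ of Remark $2$, where $s=2$, a contradiction. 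Hence $K\neq G$ and $\bar G\neq 1$.

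Because $K$ is the maximal normal solvable subgroup, $\bar G$ has trivial solvable radical; in particular $F(\bar G)=1$, every minimal normal subgroup of $\bar G$ is non-abelian, and ${\rm Soc}(\bar G)=P_1\times\cdots\times P_m$ is a direct product of non-abelian simple groups. Moreover $C_{\bar G}({\rm Soc}(\bar G))=1$, so $\bar G$ embeds into ${\rm Aut}({\rm Soc}(\bar G))$. Consequently the claim that $\bar G$ is almost simple is exactly the claim $m=1$, and this is what the remainder of the proof must establish.

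The key tool is a cross-factor adjacency remark combined with the odd-order theorem. If $p\in\pi(P_i)$ and $q\in\pi(P_j)$ with $i\neq j$, then $P_i\times P_j$ contains an element of order $pq$, so $p\sim q$ in ${\rm GK}(\bar G)$; in words, primes coming from distinct direct factors are always joined. Since every non-abelian simple group has even order, $2\in\pi(P_i)$ for each $i$. Hence if $m\geqslant 2$, then for any odd prime $r$ dividing $|{\rm Soc}(\bar G)|$, say $r\mid |P_i|$, I may pick $j\neq i$ and use $2\mid|P_j|$ to obtain $2\sim r$. Thus $m\geqslant 2$ would make $2$ adjacent in ${\rm GK}(\bar G)$ to every prime dividing the socle, and I would aim to upgrade this to the statement that $2$ is adjacent to \emph{all} primes of $G$, contradicting $t(2,G)\geqslant 2$.

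The hard part is precisely this upgrade: transporting the adjacencies found in ${\rm GK}(\bar G)$ back to ${\rm GK}(G)$, and accounting for the primes dividing $|K|$ and those contributed by the outer part $\bar G/{\rm Soc}(\bar G)$. Passing to a quotient may both create and destroy edges, so an element of order $2r$ in $\bar G$ need not lift to one in $G$; realizing such an element has to be arranged inside the preimage of ${\rm Soc}(\bar G)$, e.g. via Schur--Zassenhaus or Hall-type complements, and ultimately through a case analysis of the possible simple factors $P_i$ using the adjacency criteria of Vasil'ev--Vdovin and the classification of finite simple groups. I expect this to be the principal obstacle; the hypothesis $t(G)\geqslant 3$ is then invoked to exclude the remaining borderline configurations in which $2$ fails to be universal yet the factors still cannot be amalgamated into one.
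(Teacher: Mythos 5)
Your attempt has a genuine gap, and it is worth noting at the outset that the paper itself does not prove this statement at all: it is imported verbatim as Theorem 1 of Vasil'ev--Gorshkov \cite{vg}, a deep theorem whose published proof depends on the classification of finite simple groups. Your preliminary reductions are correct and standard: the Hall-subgroup argument showing that $t(G)\geqslant 3$ forces $G$ to be non-solvable (a solvable Hall $\{p,q,r\}$-subgroup would have prime graph with three isolated vertices, impossible since solvable groups with disconnected prime graph are Frobenius or $2$-Frobenius with $s=2$); the triviality of the solvable radical of $\bar G=G/K$; the embedding ${\rm Soc}(\bar G)=P_1\times\cdots\times P_m\leqslant \bar G\leqslant {\rm Aut}({\rm Soc}(\bar G))$; the cross-factor adjacency remark; and the consequence that $m\geqslant 2$ makes $2$ adjacent in ${\rm GK}(\bar G)$ to every prime of $\pi({\rm Soc}(\bar G))$. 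But the proof stops exactly where the theorem begins. The odd prime $s$ witnessing $t(2,G)\geqslant 2$ need not lie in $\pi({\rm Soc}(\bar G))$: it may divide $|K|$ or $|\bar G/{\rm Soc}(\bar G)|$, and your final paragraph concedes this case rather than closing it. That case is the entire substance of the theorem, and "a case analysis of the possible simple factors using Vasil'ev--Vdovin and CFSG" is a statement of intent, not an argument.

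Moreover, the obstacle you name is not the real one. Edges of ${\rm GK}(\bar G)$ \emph{do} lift to ${\rm GK}(G)$: for any normal subgroup $N$ of $G$ one has $\omega(G/N)\subseteq\omega(G)$, since the order of $xN$ divides the order of $x$, so a suitable power of any preimage of an element of order $2r$ has order exactly $2r$; quotients can destroy edges but never create them. The genuine difficulty runs in the opposite direction: one must manufacture an element of order $2s$ in $G$ when $s\in\pi(K)$, or when $s$ divides the outer part $|\bar G/{\rm Soc}(\bar G)|$, assuming $m\geqslant 2$. For $s\in\pi(K)$ the natural attack is Frattini plus coprime action --- lift a four-group $\langle\bar u,\bar v\rangle\leqslant P_1\times P_2$ into $N_G(R)$ for $R\in{\rm Syl}_s(K)$ and use the generation $R=\langle C_R(e) : e\in E\setminus\{1\}\rangle$ --- but the lift can fail (the preimage may have generalized quaternion Sylow $2$-subgroups, which map onto $\mathbb{Z}_2\times\mathbb{Z}_2$ without containing a four-group). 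For $s$ dividing the outer part one needs that an order-$s$ element either permutes the factors nontrivially (then it centralizes a diagonal involution) or acts coprimely on some $P_i$, where CFSG-level facts enter (coprime automorphisms of simple groups are essentially field automorphisms, whose fixed-point subgroups have even order). None of this appears in your sketch, and it cannot be waved through as routine: it is precisely the content of the proof in \cite{vg}.
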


\begin{lm}\label{independent-3} Let $G$ be a finite group with $|G|=|L_4(q)|$  and $D(G)=D(L_4(q))$. Then there hold.
\begin{itemize}
\item[$(1)$] $t(2,G)\geqslant 2$.
\item[$(2)$] If $t(G)\geqslant 3$, then there exists a finite non-abelian simple group $S$ such that
$S\leqslant G/K\leqslant {\rm Aut}(S)$ for the maximal normal
solvable subgroup $K$ of $G$.
\end{itemize}
\end{lm}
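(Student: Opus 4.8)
The plan is to treat the two assertions in turn, noting that (2) will follow from (1) by a single application of Theorem \ref{thm-1111}. For (1) the strategy is to show that the vertex $2$ is non-adjacent to at least one other prime, which is equivalent to $\deg_G(2)<|\pi(G)|-1$. First I would observe that $2\mid |L_4(q)|$ for every $q$ under consideration, so $2$ is the least prime divisor of $|G|$ and $\deg_G(2)$ is precisely the leading coordinate of the degree pattern. Since $D(G)=D(L_4(q))$ by hypothesis, this leading coordinate is read directly off Table 2.

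The key observation is then that, in each row of Table 2, the first coordinate $\deg_G(2)$ satisfies $\deg_G(2)\leqslant |\pi(G)|-2$, hence is strictly smaller than $|\pi(G)|-1$. One may instead obtain this uniformly from Proposition \ref{prop-Clasic-1}(2) applied to $A_3(q)=L_4(q)$, where $n=4$ and $n_2=4$: each of the three subcases there subtracts the cardinality of a nonempty Zsigmondy set, $R_3(q)$ or $R_4(q)$, both nonempty by Lemma \ref{l1} since $q\geqslant 19$, so that $\deg_G(2)\leqslant |\pi(G)|-2$. Because $\deg_G(2)<|\pi(G)|-1$, the vertex $2$ fails to be joined to some prime $r\in\pi(G)\setminus\{2\}$, so $\{2,r\}$ is an independent set of ${\rm GK}(G)$ containing $2$. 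Therefore $t(2,G)\geqslant 2$, which proves (1).

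For (2), assume in addition that $t(G)\geqslant 3$. Combined with $t(2,G)\geqslant 2$ from part (1), the hypotheses of Theorem \ref{thm-1111} are satisfied. Applying that theorem with $K$ the maximal normal solvable subgroup of $G$ yields a finite non-abelian simple group $S$ with $S\leqslant G/K\leqslant {\rm Aut}(S)$, which is exactly the claimed conclusion.

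I do not anticipate a genuine obstacle: part (1) is a finite verification of the first entry of each degree pattern, and part (2) is an immediate invocation of the quoted structural result. The only point requiring a little care is the group $L_4(32)$, where $2$ is the defining characteristic and so Proposition \ref{prop-Clasic-1} (which assumes $p\neq 2$) does not apply; there I would simply quote the value $\deg_G(2)=3$ from Table 2 to reach the same conclusion.
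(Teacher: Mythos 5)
Your proof is correct and follows essentially the same route as the paper: part (1) by inspecting known degree data for ${\rm GK}(L_4(q))$, and part (2) as an immediate application of Theorem \ref{thm-1111} with the maximal normal solvable subgroup $K$. The only difference is cosmetic: the paper justifies $t(2,G)\geqslant 2$ by citing Table 2 of \cite{bakbari}, whereas you derive it self-containedly from the paper's own Table 2 and Proposition \ref{prop-Clasic-1}, via the observation that the first coordinate of ${\rm D}(G)={\rm D}(L_4(q))$ equals $\deg_G(2)$ and is at most $|\pi(G)|-2$ in every case, so $2$ is non-adjacent to some vertex.
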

\begin{proof}
$(1)$ By the results summarized in Table 2 in \cite{bakbari}, it
is easy to see that $t(2,G)\geqslant 2$. Part $(2)$ follows
immediately from part $(1)$ and Theorem \ref{thm-1111}.
\end{proof}

Using the above results, we are now able to prove the following
theorem.
\begin{theorem}\label{main-2} Let $G$ be a finite group satisfies the conditions $(1) \ |G|=|L_4(q)|$ and
$(2) \ {\rm D}(G)={\rm D}(L_4(q))$, where $q\in \{19, 23, 25, 27,
29, 31, 32, 37\}$. Then $G\cong L_4(q)$.
\end{theorem}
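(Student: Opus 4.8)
The plan is to show that the hypotheses force $G$ to be almost simple modulo its solvable radical, and then to pin down the unique simple group that can occur. Write $K$ for the maximal normal solvable subgroup of $G$ and set $M=L_4(q)$. By Lemma \ref{independent-3}(1) we already have $t(2,G)\geqslant 2$, so the first concrete task is to establish $t(G)\geqslant 3$. I would read this off the degree patterns in Table 2 by a direct combinatorial argument: for each of the eight values of $q$, the complementary degree sequence of $\mathrm{GK}(G)$ admits no triangle-free realization, so every graph with the prescribed degrees has an independent set of size three. (For $q=19$, for instance, the complementary degrees are $\{4,3,2,1,1,1\}$; in a triangle-free realization the degree-$4$ vertex would have an independent neighbourhood containing the three degree-one vertices, which are then saturated, leaving the degree-$3$ vertex unable to find three neighbours.) With $t(G)\geqslant 3$ and $t(2,G)\geqslant 2$ in hand, Theorem \ref{thm-1111} (via Lemma \ref{independent-3}(2)) produces a non-abelian simple group $S$ with $S\leqslant G/K\leqslant \mathrm{Aut}(S)$ and $\pi(S)\subseteq\pi(G)$.

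The heart of the proof is to identify $S$ with $L_4(q)$, and the key device is a sharpened form of Lemma \ref{inK}(1). If a prime $r$ with $r\parallel|G|$ divides $|K|$, then $K$ contains a $G$-chief factor $V\cong\mathbb{Z}_r$, so $G/C_G(V)\hookrightarrow\mathrm{Aut}(V)\cong\mathbb{Z}_{r-1}$; hence every prime $t\in\pi(G)$ with $t\nmid r-1$ has its Sylow subgroup inside $C_G(V)$, forcing $t\sim r$ in $\mathrm{GK}(G)$. Consequently $\deg_G(r)\geqslant|\{t\in\pi(G)\setminus\{r\}\,:\,t\nmid r-1\}|$, and whenever this count exceeds the value of $\deg_G(r)$ given by Table 2 we reach a contradiction. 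In each case there is a \emph{distinguishing} first-power prime $r^{*}$ (namely $127,53,7,73,67,37,41,67$ for $q=19,23,25,27,29,31,32,37$) for which this count strictly exceeds $\deg_G(r^{*})$; therefore $r^{*}\nmid|K|$, and since $|\mathrm{Out}(S)|$ has no large prime divisors (cf.\ Lemma \ref{outer-prime}) we conclude $r^{*}\in\pi(S)$.

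Next I would bring in the largest prime $p^{*}$ of $|G|$ and Lemma \ref{lem2.3}. For every $q\neq 19$ the same centralizer count applied to $p^{*}$ already gives $\deg_G(p^{*})$ smaller than the forced number of neighbours, so $p^{*}\nmid|K|$ and $p^{*}\in\pi(S)$; then Lemma \ref{lem2.3} restricts $S$ to a short explicit list, and the competitors $L_2(q^2),S_4(q),L_3(q)$ are excluded because their orders are not divisible by $r^{*}$, leaving $S\cong L_4(q)$. For $q=19$, where the count for $p^{*}=181$ only matches $\deg_G(181)=2$, I split on $181\in\pi(S)$: if it holds, Lemma \ref{lem2.3}(1) leaves only $L_4(19)$ as the candidate containing $127=r^{*}$; if not, then $181\mid|K|$ forces the edge $127\sim181$, while $S$ (a simple group with maximal prime $127$, hence $L_3(19)$, whose prime graph is connected) contributes a further neighbour of $127$, giving $\deg_G(127)\geqslant 2>1$, a contradiction. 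The case $q=25$ needs one extra elimination, since the competitor ${}^2D_4(5)$ shares the prime set of $L_4(25)$; here $|{}^2D_4(5)|=|L_4(25)|/13$, so $S={}^2D_4(5)$ forces $13\mid|K|$, and the centralizer argument on a chief factor $\mathbb{Z}_{13}$ (yielding $13\sim 5,7,31,313$) together with the adjacency $13\sim2$ lifted from ${}^2D_4(5)$ pushes $\deg_G(13)\geqslant 5$ above the required value $4$.

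Once $S\cong L_4(q)$ is secured, $|S|=|M|=|G|$ combined with $S\leqslant G/K\leqslant\mathrm{Aut}(S)$ gives $|G/K|\geqslant|S|=|G|$, whence $K=1$ and $G=G/K=S\cong L_4(q)$, completing the proof. The main obstacle is precisely the middle stage: ruling out that a large prime hides in the solvable radical, and showing that the only simple section compatible with both the order $|M|$ and the prescribed degree pattern is $L_4(q)$ itself. This rests on the strengthened centralizer/chief-factor argument, on lifting adjacencies from $S$ to $G$, and — most delicately — on the order-and-degree comparison with ${}^2D_4(5)$ in the case $q=25$.
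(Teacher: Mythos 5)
Your overall architecture matches the paper's (solvable radical $K$, Vasil'ev--Gorshkov to get $S\leqslant G/K\leqslant{\rm Aut}(S)$, Zavarnitsine's list via a large prime, exclusion of competitors, then $|G|=|S|$ forcing $K=1$), and your complement-degree argument for $t(G)\geqslant 3$ is a nice uniform substitute for the paper's Fig.~9 analysis. But the engine of your proof, the ``sharpened form of Lemma \ref{inK}(1)'', is false, and its proof is invalid. Centralizing a $G$-chief factor $V=H/L$ of $K$ is \emph{not} the same as centralizing a subgroup: from $[T,H]\leqslant L$ for a Sylow $t$-subgroup $T$ you get no element of order $rt$. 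Moreover the conclusion itself genuinely fails: if $r\mid t^{a}-1$ where $t^{a}\parallel |G|$, then $K$ may contain a Frobenius group $\mathbb{Z}_t^{a}\rtimes\mathbb{Z}_r$ as a Hall $\{t,r\}$-subgroup, and then $t\nsim r$ even though $t\nmid r-1$. This is exactly why the paper's Lemma \ref{inK} demands that \emph{both} primes divide $|G|$ to the first power and that \emph{both} conditions $p_j\nmid p_i-1$, $p_i\nmid p_j-1$ hold. Your counts are therefore inflated in every case: e.g.\ $127\mid 19^{3}-1$, $53\mid 23^{4}-1$, $313\mid 5^{8}-1$, $421\mid 29^{4}-1$, $331\mid 31^{3}-1$, $7\mid 5^{6}-1$ and $7\mid 13^{2}-1$, so the corresponding primes cannot be counted as forced neighbours.

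This is not merely cosmetic: for $q=29$ the legitimately forceable neighbours of $p^{*}=421$ are only $13$ and $67$ (not $29$), giving a count of $2=\deg_G(421)$ and \emph{no} contradiction; likewise for $q=31$ the forceable neighbours of $331$ are only $13$ and $37$, again matching $\deg_G(331)=2$. The paper closes precisely these two cases with an additional degree-saturation argument (if $421\sim 13,67$ exhausts $\deg(421)$, then $\deg_G(7)=6$ forces $7$ to absorb the remaining slots of $13$ and $67$, leaving $3$ and $5$ unable to reach degree $5$), which your proposal lacks. Similarly, your claim $7\nmid|K|$ for $q=25$ fails (the corrected count is $2<3=\deg_G(7)$), so your exclusion of $L_2(5^4)$ and $S_4(25)$ is unsupported; the paper instead argues that if $S$ were one of these groups then $7,31\in\pi(K)$ would force $\deg_G(313)\geqslant 2>1$. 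Finally, in your $q=19$ fallback you assert that the prime graph of $L_3(19)$ is connected, but in fact $127$ is an isolated vertex there (since $(19^2+19+1)/3=127$ lies in $\mu(L_3(19))$ as a maximal element), so that sub-argument also breaks; the correct route is that $181\mid|K|$ forces $127\sim 181$, which the degree pattern $(4,4,4,3,1,2)$ makes impossible. In short, the skeleton is right but the key lemma must be replaced by the weaker, correct Lemma \ref{inK} plus case-specific Hall-subgroup and degree-saturation arguments, as in the paper.
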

\begin{proof}
To simplify arguments, we introduce the following notation which
will be used throughout the proof at various places.
\begin{itemize}
\item  $G\equiv $ a finite group satisfying the conditions
$(1) \ |G|=|L_4(q)|$ and $(2) \ {\rm D}(G)={\rm D}(L_4(q))$, where
$q\in \{19, 23, 25, 27, 29, 31, 32, 37\}$.
\item   $K\equiv$ the maximal normal solvable subgroup
of $G$.
\item $A\equiv$ the quotient group $G/K$.
\item $S\equiv$ the socle of $A$, that is ${\rm soc}(A)$.
\end{itemize}
We handle every case singly.

{\bf Case 1.} $q=19$. By the hypothesis $|G|=|L_4(19)|=2^7\cdot
3^7\cdot5^2\cdot {19}^6\cdot 127 \cdot 181$ and ${\rm D}(G)={\rm
D}(L_4(19))=(4, 4, 4, 3, 1, 2)$, we immediately conclude that
${\rm GK}(G)$ is a connected graph, and three possibilities may
occur (Fig. 9):

\vspace{0.95cm}

{\small  \setlength{\unitlength}{4mm}
\begin{picture}(1,1)(4,13)
\linethickness{0.3pt} %
\put(8,14){\circle*{0.5}}
\put(13,14){\circle*{0.5}}
\put(10.5,11.5){\circle*{0.5}}
\put(15.5,11.5){\circle*{0.5}}
\put(5.5,11.5){\circle*{0.5}}
\put(5.5,11.5){\line(1,1){2.5}}
\put(8,14){\line(1,0){5}}
\put(13,14){\line(1,-1){2.5}}
\put(8,14){\line(1,-1){2.5}}
\put(10.5,11.5){\line(1,1){2.5}}
\put(5,10.2){\small$127$}%
\put(7.5,14.7){\small$3$}%
\put(12.5,14.7){\small$2, 5$}%
\put(15,10.2){\small$181$}%
\put(10.1,10.2){19}%
\put(21,14){\circle*{0.5}}
\put(26,14){\circle*{0.5}}
\put(23.5,11.5){\circle*{0.5}}
\put(28.5,11.5){\circle*{0.5}}
\put(18.5,11.5){\circle*{0.5}}
\put(18.5,11.5){\line(1,1){2.5}}
\put(21,14){\line(1,0){5}}
\put(26,14){\line(1,-1){2.5}}
\put(21,14){\line(1,-1){2.5}}
\put(23.5,11.5){\line(1,1){2.5}}
\put(18,10.2){\small$127$}%
\put(20.5,14.7){\small$2$}%
\put(25.5,14.7){\small$3, 5$}%
\put(28,10.2){\small$181$}%
\put(23.1,10.2){19}%
\put(34,14){\circle*{0.5}}
\put(39,14){\circle*{0.5}}
\put(36.5,11.5){\circle*{0.5}}
\put(41.5,11.5){\circle*{0.5}}
\put(31.5,11.5){\circle*{0.5}}
\put(31.5,11.5){\line(1,1){2.5}}
\put(34,14){\line(1,0){5}}
\put(39,14){\line(1,-1){2.5}}
\put(34,14){\line(1,-1){2.5}}
\put(36.5,11.5){\line(1,1){2.5}}
\put(31,10.2){\small$127$}%
\put(33.5,14.7){\small$5$}%
\put(38.5,14.7){\small$2, 3$}%
\put(41,10.2){\small$181$}%
\put(36.1,10.2){19}%
\put(13,8){{\small \bf Fig. 9.} \ \ \mbox{All possibilities for
the prime graph of} $G$.}
\end{picture}}

\vspace{2.2cm}

In all cases, we observe that $\{19, 181, 127\}$ is an
independent set. Hence, $t(G)\geqslant 3$ and it is also easy to
see that $t(2, G)\geqslant 2$. Therefore, it follows from Lemma
\ref{independent-3} that there exists a finite non-abelian simple
group $S$ such that $S\leqslant A\leqslant {\rm Aut}(S)$. We claim
that $K$ is a $\{127, 181\}'$-group. To prove this, first note
that $127\notin \pi(K)$ by Lemma \ref{inK}. Next assume that
$181\in \pi(K)$ and let $R\in {\rm Syl}_{181}(K)$. By Frattini
argument $G=KN_G(R)$. Therefore, the normalizer $N_G(R)$ contains
an element of order $127$, say $x$. Put $H:=\langle x\rangle R$.
Then $H$ is a subgroup of $G$ of order $181\cdot 127$, which is
an Abelian subgroup of $G$, and so $127\cdot 181\in \omega(G)$, a
contradiction. Now, from Lemma \ref{outer-prime}, we deduce that
$181\notin \pi({\rm Out}(S))$, which implies that $181\in
\pi(S)$. Now, by Lemma \ref{lem2.3} $(1)$, it follows that $S$ is
isomorphic to $L_4(19)$, and since $L_4(19)\leqslant G/K
\leqslant {\rm Aut}(L_4(19))$ and $|G|=|L_4(19)|$,  $K=1$ and
$G\cong L_4(19)$.

{\bf Case 2.} $q=23$. In this case, we have
$|G|=|L_4(23)|=2^9\cdot 3^2\cdot 5\cdot 7\cdot {11}^3\cdot
{23}^6\cdot 53\cdot 79$, and ${\rm D}(G)={\rm D}(L_4(23))=(5, 5,
3, 2, 5, 3, 3, 2)$. By Lemma \ref{inK} $(1)$, $K$ is a $\{7,
79\}'$-group because $\deg_G(79)<|\Delta(79)|=3$ and
$\deg_G(7)<|\Delta(7)|=3$. In addition, we have $S=P_1\times
\cdots \times P_m$, where each $P_i$ is a non-abelian simple
group, and $S\leqslant A\leqslant {\rm Aut}(S)$. We claim that
$m=1$ and $S\cong L_4(23)$. Suppose $m\geqslant 2$. It is clear
that $79$ does not divide the order of $S$, otherwise ${\rm
deg}(79)\geqslant 3$, which is a contradiction. Note that
$79\in\pi(A)\subseteq\pi({\rm Aut}(S))$, and so $79$ divides
$|{\rm Out}(S)|$. However, since
$${\rm Out}(S)={\rm Out}(S_1)\times \cdots\times {\rm
Out}(S_r),$$ where the groups $S_j$ are direct products of
isomorphic $P_i$'s such that $S\cong S_1\times \cdots \times S_r$,
for some $j$, $79$ divides $|{\rm Out}(S_j)|=|{\rm
Aut}(S_j)|/|S_j|$ where $S_j$ is a direct product of $t$
isomorphic simple groups $P_i$. On the one hand, since
$P_i\in{\cal{S}}_{53}$, the order of $S_j$ is not divisible by
$79$, and so $79 | \ |{\rm Aut}(S_j)|$. On the other hand, since
$P_i\in{\cal{S}}_{53}$, it follows that $|{\rm Aut}(P_i)|$ is not
divisible by $79$, and since
$$|{\rm Aut}(S_j)|=|{\rm Aut}(P_i)|^t\cdot t!,$$ we conclude that
$t\geqslant 79$. But then $4^{79}$ must divide $|G|$, which is a
contradiction. Therefore $m=1$ and $S\cong P_1$ is a simple group.

Clearly $S\in {\cal{S}}_{79}$. It follows from Lemma
\ref{outer-prime} that $79\notin \pi({\rm Out}(S))$, while $79\in
{\rm Aut}(S)$. Hence $79\in \pi(S)$, and Lemma \ref{lem2.3} $(2)$
shows that $S$ is isomorphic to $L_3(23)$ or $L_4(23)$. If
$S\cong L_3(23)$, then $|S|=2^5\cdot 3\cdot 7\cdot 11^2\cdot
23^3\cdot 79$ and since $|{\rm Out}(S)|=4$, $|K|$ is divisible by
$5\cdot 53$. By Lemma \ref{inK} and condition $(2)$, $\{5, 7, 23,
79\}$ is a connected component of ${\rm GK}(G)$, which
contradicts the assumption $(2)$. Therefore, $S=L_4(23)$ and
$L_4(23)\leqslant\frac{G}{K}\leqslant{\rm Aut}(L_4(23))$. Since
$|G|=|L_4(23)|$, $|K|=1$ and $G$ is isomorphic to $L_4(23)$.

{\bf Case 3.} $q=25$. Here, $G$ is a finite group with
$|G|=|L_4(25)|=2^{10}\cdot 3^4\cdot 5^{12}\cdot 7\cdot
{13}^2\cdot 31\cdot 313$ and ${\rm D}(G)={\rm D}(L_4(25))=(5, 5,
3, 3, 4, 3, 1)$. Since $|\Delta(313)|=2$ and $\deg_G(313)=1$,
 $|K|$ is not divisible by $313$ by Lemma \ref{inK}.
Moreover, as previous case, $S=P_1\times \cdots \times P_m$, where
each $P_i$ is a non-abelian simple group, and $S\leqslant
A\leqslant {\rm Aut}(S)$. Again we claim that $m=1$ and $S\cong
L_4(25)$. Suppose $m\geqslant 2$. It is clear that $313$ does not
divide $|S|$, otherwise ${\rm deg}(313)\geqslant 2$, which is a
contradiction. On the other hand, $313$ divides $|{\rm Out}(S)|$
because $313\in\pi(A)\subseteq\pi({\rm Aut}(S))$. In a similar
way as in the previous case, it follows that $2^{626}$ divides
$|G|$, which is a contradiction. Therefore, $m=1$ and $S\cong
P_1$. In the sequel, we show that $P_1\cong L_4(25)$.

Certainly $S\in {\cal{S}}_{313}$. Moreover, by Lemma
\ref{outer-prime}, $313\notin \pi({\rm Out}(S))$, while $313\in
{\rm Aut}(S)$. Hence $313\in \pi(S)$, and Lemma \ref{lem2.3} $(3)$
shows that $S$ is isomorphic to one of the following simple
groups: $L_2(5^4)$, $S_4(5^2)$, ${^2D}_4(5)$ or $L_4(5^2)$. We
recall that:
$$\begin{array}{lll} |L_2(5^4)|=2^{4}\cdot 3\cdot 5^4\cdot
13\cdot 313, &  \ \   & |S_4(5^2)|=2^{8}\cdot 3^2\cdot 5^8\cdot
{13}^2\cdot
313,\\[0.2cm]
|^2D_4(5)|=2^{10}\cdot 3^4\cdot 5^{12}\cdot 7\cdot 13\cdot
31\cdot {313}, & \ \ &  |L_4(5^2)|=2^{10}\cdot 3^4\cdot
5^{12}\cdot 7\cdot {13}^2\cdot 31\cdot 313.\\
\end{array} $$
If $S$ is isomorphic to $L_2(5^4)$ or $S_4(5^2)$, then $7, 31\in
\pi(K)$ and hence by Lemma \ref{inK}, we obtain
$\deg(313)\geqslant 2$, which is a contradiction.

If $S\cong {^2D}_4(5)$, then $13\sim p$ in ${\rm GK}(S)$, and so
in ${\rm GK}(G)$, for each $p\in \{2, 3, 5\}$ (see Propositions
3.1 $(5)$ and 4.4 in \cite{vv} and Proposition 2.5 in
\cite{vvc}), hence $\deg_S(13)=3\leqslant \deg_G(13)$. On the
other hand, since $|{\rm Out}(S)|=4$, $K$ is a normal subgroup of
$G$ of order 13. Let $Q$ be a $q$-Sylow subgroup of $G$ where
$q\in \{7, 31\}$. Then $KQ$ is a cyclic subgroup of $G$ of order
$13q$, and so $13\sim q$ in ${\rm GK}(G)$,  which forces
$\deg_G(13)\geqslant 5$, this cannot be the case.

Thus $S\cong L_4(25)$ and
$L_4(25)\leqslant\frac{G}{K}\leqslant{\rm Aut}(L_4(25))$. It
follows now from $|G|=|L_4(25)|$ that $|K|=1$ and $G$ is
isomorphic to $L_4(25)$.

{\bf Case 4.} $q=27$. In this case, we observe that
$$|G|=|L_4(27)|=2^7\cdot 3^{18}\cdot 5\cdot 7^2\cdot {13}^3\cdot
73\cdot 757 \ \mbox{and} \ {\rm D}(G)={\rm D}(L_4(27))=(5, 3, 3,
5, 4, 3, 1).$$  Since $|\Delta(757)|=2$ and $\deg_G(757)=1$,
 $757\notin \pi(K)$ by Lemma \ref{inK}.
In a similar way as before it can be shown that $S$ is a
non-abelian simple group and $S\leqslant A\leqslant {\rm Aut}(S)$.
We will show that $S\cong L_4(27)$. First of all, it is easily
seen that $757\in \pi(S)$. Thus by Lemma \ref{lem2.3} $(4)$, $S$
is isomorphic to one of the groups: $L_3(27)$ or $L_4(27)$. If
$S\cong L_3(27)$, then $|S|=2^{4}\cdot 3^9\cdot 7\cdot
{13}^2\cdot 757$ and since $|{\rm Out}(S)|=6$, $|K|$ is divisible
by $5\cdot 73$. By Lemma \ref{inK}, $5\sim 757$ and $73\sim 757$
in ${\rm GK}(G)$, and so $\deg_G(757)\geqslant 2$, which is not
the case. Thus $S\cong L_4(27)$, as required. Finally from
$L_4(27)\leqslant \frac{G}{K}\leqslant{\rm Aut}(L_4(27))$ and
$|G|=|L_4(27)|$, we conclude that $|K|=1$ and $G$ is isomorphic
to $L_4(27)$.

{\bf Case 5.} $q=29$. Here, $G$ is a finite group with
$$|G|=|L_4(29)|=2^7\cdot 3^2\cdot 5^2\cdot 7^3\cdot {13}\cdot
{29}^6\cdot 67\cdot 421 \ \mbox{and}  \ {\rm D}(G)={\rm
D}(L_4(29))=(4, 5, 5, 6, 2, 4, 2, 2).$$ First, we claim that the
order of $K$ is not divisible by $421$. Assume the contrary.
Then, in view of Lemma \ref{inK}, $421\sim 13$ and $421\sim 67$
in ${\rm GK}(G)$. Moreover, we have $\deg_G(7)=6$ and this forces
$7\sim q$ for $q\in \pi(G)\setminus \{7, 421\}$. Therefore, for
each $p\in \pi(G)\setminus \{7, 13, 67, 421\}$, it follows that
$\deg_G(p)\leqslant |\pi(G)|-4=4$, which is a contradiction.

Using a similar argument as previous case, one can see that $S$ is
a non-abelian simple group, and $S\leqslant A\leqslant {\rm
Aut}(S)$. In the sequel, we will show that $S\cong L_4(29)$.
Indeed, by what observed above and Lemma \ref{outer-prime},
$421\notin \pi(K)\cup \pi({\rm Out}(S))$, thus $421\in \pi(S)$.
Now, from Lemma \ref{lem2.3} $(5)$, $S$ is isomorphic to one of
the groups: $L_2(29^2)$, $S_4(29)$ or $L_4(29)$. Note that
$|L_2({29}^2)|=2^{3}\cdot 3\cdot 5\cdot 7\cdot {29}^2\cdot 421$
and $|S_4(29)|=2^{6}\cdot 3^2\cdot 5^2\cdot 7^2\cdot {29}^4\cdot
421$. If $S$ is isomorphic to $L_2(29^2)$ or $S_4(29)$, then $13,
67\in \pi(K)$.  In view of Lemma \ref{inK}, we observe that
$\{13, 67, 421\}$ is a clique in ${\rm GK}(G)$, and this forces
$\deg_G(3)\leqslant 4$, a contradiction. Therefore, $S$ is
isomorphic to $L_4(29)$, and so $L_4(29)\leqslant
\frac{G}{K}\leqslant {\rm Aut}(L_4(29))$. Since $|G|=|L_4(29)|$,
we obtain $|K|=1$ and $G$ is isomorphic to $L_4(29)$.

{\bf Case 6.} $q=31$. In this case, we have
$$|G|=|L_4(31)|=2^{13}\cdot 3^4\cdot 5^3\cdot {13}\cdot
{31}^6\cdot 37\cdot 331  \ \mbox{and} \ {\rm D}(G)={\rm
D}(L_4(31))=(5, 4, 4, 2, 3, 2, 2).$$ In a similar way as in the
proof of Case 5, we see that $331\notin \pi(K)$ and $S$ is a
non-abelian simple group such that $S\leqslant A\leqslant {\rm
Aut}(S)$. In addition, $331\in \pi(S)$ and by Lemma \ref{lem2.3}
$(6)$, $S$ is isomorphic to one of the groups: $L_3(31)$ or
$L_4(31)$. If $S$ is isomorphic to $L_3(31)$, then
$|L_3(31)|=2^{7}\cdot 3^2\cdot 5^2\cdot {31}^3\cdot 331$, and we
conclude that $13, 37\in \pi(K)$. Now it follows from Lemma
\ref{inK} that $\{13, 37, 331\}$ is a clique in ${\rm GK}(G)$,
and this forces $\deg_G(2)\leqslant 3$, a contradiction. Finally,
$S\cong L_4(31)$, and hence
$L_4(31)\leqslant\frac{G}{K}\leqslant{\rm Aut}(L_4(31))$. Since
$|G|=|L_4(31)|$, we obtain $|K|=1$ and $G$ is isomorphic to
$L_4(31)$.

{\bf Case 7.} $q=32$. Here, we deal with a finite group $G$ with
$$|G|=|L_4(32)|=2^{30}\cdot 3^2\cdot 5^2\cdot 7\cdot {11}^2\cdot
{31}^3\cdot 41\cdot 151  \ \mbox{and}  \ {\rm D}(G)={\rm
D}(L_4(32))=(3, 5, 3, 2, 5, 5, 3, 2).$$ We claim that the order of
$K$ is coprime to $151$. Suppose not. Then, it follows from Lemma
\ref{inK} that $151\sim 41$ and $151\sim 7$. If the order of $K$
is divisible by 11, then we consider a $\{11, 151 \}$-Hall
subgroup of $K$, say $L$. Clearly $|L|=11^i\cdot 151$, where
$i=1$ or $2$. In both cases the group $L$ is abelian and so
$11\sim 151$ in ${\rm GK}(G)$. Thus $\deg_G(151)\geqslant 3$,
which is a contradiction. Therefore we may assume that $11\notin
\pi(K)$. Let $P\in {\rm Syl}_{151}(G)$. Then $G=KN_G(P)$ by
Frattini argument, and so $N_G(P)$ contains an element of order
$11$, say $x$. Clearly $P\langle x \rangle$ is a cyclic subgroup
of $G$ of order $11\cdot 151$, and hence $11\sim 151$ in ${\rm
GK}(G)$, again a contradiction. This completes the proof of our
claim.

Finally, as previous cases, we can show that $S$ is a non-abelian
simple group, and $S\leqslant A\leqslant {\rm Aut}(S)$. We claim
that $S\cong L_4(32)$. Since $151\notin \pi(K)\cup \pi({\rm
Out}(S))$,  $151\in \pi(S)$ and by Lemma \ref{lem2.3} $(7)$, $S$
is isomorphic to one of the groups: $L_3(32)$ or $L_4(32)$. If
$S$ is isomorphic to $L_3(32)$, then $|S|=2^{15}\cdot 3\cdot
7\cdot 11\cdot 31^2\cdot 151$, and since $|{\rm Out}(S)|=10$, we
conclude that $|K|=2^\alpha\cdot 3\cdot 5^\beta\cdot 11\cdot
31\cdot 41$, where $14\leqslant \alpha\leqslant 15$ and
$1\leqslant \beta\leqslant 2$. One the one hand, we consider a
$\{41, p\}$-Hall subgroup of $K$, where $p\in \{3, 11, 31\}$.
Such subgroups are clearly cyclic, which implies that $41\sim p$
in ${\rm GK}(K)$ and so in ${\rm GK}(G)$. On the other hand, from
Lemma \ref{inK}, it follows that $7\sim 41\sim 151$ in ${\rm
GK}(G)$. Thus $\deg_G(41)\geqslant 5$, which is a contradiction.
Therefore $S\cong L_4(32)$ and
$L_4(32)\leqslant\frac{G}{K}\leqslant{\rm Aut}(L_4(32)).$
Moreover, since $|G|=|L_4(32)|$, $|K|=1$ and $G$ is clearly
isomorphic to $L_4(32)$.

{\bf Case 8.} $q=37$. In this case, we have $$|G|=|L_4(37)|=2^7
\cdot 3^7 \cdot 5 \cdot 7 \cdot {19}^2\cdot {37}^6\cdot 67\cdot
137  \ \  \mbox{and} \ \   D(G)=D(L_4(37))=(3, 5, 2, 2, 5, 3, 2,
2).$$ Since $|\Delta(137)|=3$ and $\deg_G(137)=2$, the order of
$K$ is not divisible by $137$ by Lemma \ref{inK}. By a similar
argument, as before, it follows that $S$ is a non-abelian simple
group, and $S\leqslant A\leqslant {\rm Aut}(S)$. We next show that
$S\cong L_4(47)$.

Since $137\notin \pi(K)$ and $137\notin \pi({\rm Out}(S))$ by
Lemma \ref{outer-prime}, we conclude that $137\in \pi(S)$. Finally
$S$ is isomorphic to one of the groups described in Lemma
\ref{lem2.3} $(8)$: $L_2(37^2)$, $S_4(37)$ or $L_4(37)$. If $S$
is isomorphic to $L_2(37^2)$ or $S_4(37)$, then $|K|$ is divisible
by $67$, and we get $67\sim p$ where $p\in\{5, 7, 137\}$, hence
$\deg_G(67)\geqslant 3$, a contradiction. Thus $S\cong L_4(37)$,
and so $L_4(47)\leqslant\frac{G}{K}\leqslant{\rm Aut}(L_4(47))$.
Moreover, since $|G|=|L_4(47)|$, $|K|=1$ and hence $G$ is
isomorphic to $L_4(47)$.
\end{proof}
\section{Appendix}
In a series of articles, it was shown that many finite simple
groups are OD-characterizable or 2-fold OD-characterizable. Table
3 lists finite simple groups which are currently known to be
$k$-fold OD-characterizable for $k\in \{1, 2\}$. Until recently,
no examples of simple groups $P$ with $h_{\rm OD}(P)\geqslant 3$
were known. Therefore, we posed the following question:
\begin{problem} Is there a non-abelian simple group $P$ with $h_{\rm
OD}(P)\geqslant 3$?
\end{problem}

\begin{center}
{\bf Table 3}. Some non-abelian simple groups $S$ with $h_{\rm
OD}(S)=1$ or $2$.\\[0.4cm]
$\begin{array}{l|l|c|l} \hline S & {\rm Conditions \ on} \ S&
h_{\rm OD}(S) & {\rm Refs.} \\ \hline
 \mathbb{A}_n & \ n=p, p+1, p+2 \ (p \ {\rm a \ prime})& 1 &  \cite{mz1}, \cite{mzd}    \\
 & \ 5\leqslant n\leqslant 100, n\neq 10   & 1 & \cite{hm}, \cite{kogani}, \cite{banoo-alireza},  \\
 & & & \cite{mz3}, \cite{zs-new2} \\
& \ n=106, \ 112 & 1 &      \cite{yan-chen}       \\
& \ n=10 & 2 &      \cite{mz2}       \\[0.2cm]
L_2(q) &  q\neq 2, 3& 1 &    \cite{mz1}, \cite{mzd},\\
& & &  \cite{zshi} \\[0.1cm]
L_3(q) &  \ |\pi(\frac{q^2+q+1}{d})|=1, \ d=(3, q-1) & 1 &   \cite{mzd} \\[0.2cm]
U_3(q) &  \ |\pi(\frac{q^2-q+1}{d})|=1, \ d=(3, q+1), q>5 & 1 &   \cite{mzd} \\[0.2cm]
L_3(9) & & 1 & \cite{zs-new4}\\[0.1cm]
U_3(5) &   & 1 &   \cite{zs-new5} \\[0.1cm]
L_4(q) &  \ q\leqslant 17  & 1 &   \cite{bakbari, amr} \\[0.1cm]
U_4(7) &   & 1 &   \cite{amr} \\[0.1cm]
L_n(2) & \ n=p \ {\rm or} \ p+1, \ {\rm for \ which} \ 2^p-1 \ {\rm is \ a \ prime} & 1 & \cite{amr} \\[0.2cm]
L_n(2) & \ n=9, 10, 11  & 1 &   \cite{khoshravi}, \cite{R-M} \\[0.1cm]
U_6(2) & & 1 & \cite{LShi} \\[0.1cm]

R(q) & \ |\pi(q\pm \sqrt{3q}+1)|=1, \ q=3^{2m+1}, \ m\geqslant 1 & 1 & \cite{mzd} \\[0.2cm]
{\rm Sz} (q) & \ q=2^{2n+1}\geqslant 8& 1 &   \cite{mz1}, \cite{mzd} \\[0.2cm]
B_m(q), C_m(q) &  m=2^f\geqslant 4,  \
|\pi\big((q^m+1)/2\big)|=1, \
 & 2 & \cite{akbarim}\\[0.2cm]
B_2(q)\cong C_2(q) &  \ |\pi\big((q^2+1)/2\big)|=1, \ q\neq
3 & 1 & \cite{akbarim}\\[0.2cm]
B_m(q)\cong C_m(q) &  m=2^f\geqslant 2, \ 2|q, \
|\pi\big(q^m+1\big)|=1, \ (m, q)\neq (2, 2) & 1 &
\cite{akbarim}\\[0.2cm]
B_p(3), C_p(3) &  |\pi\big((3^p-1)/2\big)|=1, \  p \ {\rm is \ an
\ odd \
prime}  & 2 & \cite{akbarim}, \cite{mzd}\\[0.2cm]
B_3(5), C_3(5) & & 2 & \cite{akbarim} \\[0.2cm]
C_3(4) & & 1 & \cite{moghadam} \\[0.1cm]
S &  \ \mbox{A sporadic simple group} & 1 & \cite{mzd} \\[0.1cm]
S &  \ \mbox{A simple group with} \ |\pi(S)|=4, \ \ S\neq \mathbb{A}_{10} & 1 & \cite{zs} \\[0.1cm]
S &  \  \mbox{A simple group with} \ |S|\leqslant 10^8, \ \ S\neq \mathbb{A}_{10}, \ U_4(2) & 1 & \cite{ls} \\[0.1cm]
S &  \  \mbox{A simple $C_{2,2}$- group} & 1 & \cite{mz1}
\end{array}$
\end{center}
\footnotetext{In Table 3, $q$ is a power of a prime number.}

Although we have not found a simple group which is $k$-fold
OD-characterizable for $k\geqslant 3$, but among non-simple
groups, there are many groups which are $k$-fold
OD-characterizable for $k\geqslant 3$. As an easy example, if $P$
is a $p$-group of order $p^n$, then $h_{\rm OD}(P)=\nu(p^n)$,
where $\nu(m)$ signifies the number of non-isomorphic groups of
order $m$. Table 4 lists finite non-solvable groups which are
currently known to be OD-characterizable or $k$-fold
OD-characterizable with $k\geqslant 2$.

\begin{center}
{\bf Table 4}. Some non-solvable groups $G$ with known $h_{\rm
OD}(G)$.\\[0.2cm]
$\begin{array}{l|l|c|l} \hline G & {\rm Conditions \ on} \ G &
h_{\rm OD}(G) & {\rm Refs.} \\ \hline
{\rm Aut}(M) & M \ \mbox{is a sporadic group}  \neq  J_2, M^cL    & 1 & \cite{mz1} \\[0.1cm]
\mathbb{S}_n & n=p, \ p+1 \ (p\geqslant 5 \ \mbox{is a prime})& 1 &  \cite{mz1}    \\[0.1cm]
U_3(5): 2 & & 1 & \cite{zs-new5} \\[0.1cm]
U_6(2): 2 & &1& \cite{LShi}\\[0.1cm]
M &  M\in \mathcal{C}_1 & 2 &      \cite{mz2}       \\[0.1cm]
M & M\in \mathcal{C}_2 & 8 &      \cite{mz2}       \\[0.1cm]
M & M\in \mathcal{C}_3  & 3 & \cite{hm, kogani, banoo-alireza, mz3, yan-chen} \\[0.1cm]
M & M\in \mathcal{C}_4     & 2 & \cite{mz2} \\[0.1cm]
M & M\in \mathcal{C}_5    & 3 & \cite{mz2} \\[0.1cm]
M & M\in \mathcal{C}_6  & 6 &\cite{banoo-alireza} \\[0.1cm]
M & M\in \mathcal{C}_7 & 1 &  \cite{zs-new1} \\[0.1cm]
M & M\in \mathcal{C}_8  & 9 &\cite{zs-new1} \\[0.1cm]
M & M\in \mathcal{C}_{9} & 3 &  \cite{zs-new5} \\[0.1cm]
M & M\in \mathcal{C}_{10}  & 6 &\cite{zs-new5}\\[0.1cm]
M & M\in \mathcal{C}_{11}  & 3 &\cite{LShi}\\[0.1cm]
M & M\in \mathcal{C}_{12}  & 5 &\cite{LShi}\\[0.1cm]
M & M\in \mathcal{C}_{13}  & 1 &\cite{y-chen-w}\\[0.1cm]
M & M\in \mathcal{C}_{14}  & 1 &\cite{R-M}
\end{array}$
\end{center}
\begin{tabular}{lll}
$\mathcal{C}_1$ & $\!\!\!\!=$  & $ \!\!\!\! \{\mathbb{A}_{10}, J_2\times \mathbb{Z}_{3} \}$\\[0.1cm]
$\mathcal{C}_2$ & $\!\!\!\!=$ & $\!\!\!\! \{ \mathbb{S}_{10},  \
\mathbb{Z}_{2}\times \mathbb{A}_{10}, \ \mathbb{Z}_2\cdot
\mathbb{A}_{10}, \ \mathbb{Z}_6\times J_2,  \ \mathbb{S}_3 \times
J_2, \ \mathbb{Z}_3\times (\mathbb{Z}_2\cdot J_2),
$ \\[0.1cm]
& & $(\mathbb{Z}_3\times J_2)\cdot \mathbb{Z}_2, \ \mathbb{Z}_3\times {\rm Aut}(J_2)\}.$\\[0.1cm]
$ \mathcal{C}_3$ & $\!\!\!\!=$ & $\!\!\!\! \{\mathbb{S}_{n}, \
\mathbb{Z}_{2}\cdot \mathbb{A}_{n}, \ \mathbb{Z}_{2}\times
\mathbb{A}_{n}\}$, \
where $9\leqslant n\leqslant 100$ with $n\neq 10, p, p+1$ ($p$ a prime)\\[0.1cm]
& & or $n=106, \ 112$.\\[0.1cm]
$\mathcal{C}_4$ & $\!\!\!\!=$ & $\!\!\!\! \{ {\rm Aut}(M^cL), \ \mathbb{Z}_2\times M^cL\}$.\\[0.1cm]
$\mathcal{C}_5$ & $\!\!\!\!=$ & $\!\!\!\! \{{\rm Aut}(J_2), \
\mathbb{Z}_2\times J_2, \
\mathbb{Z}_2\cdot J_2\}.$\\[0.1cm]
$\mathcal{C}_6$ & $\!\!\!\!=$ & $\!\!\!\! \{{\rm Aut}(S_6(3)), \
\mathbb{Z}_2\times S_6(3), \  \mathbb{Z}_2\cdot S_6(3),  \
\mathbb{Z}_2\times O_7(3), \
\mathbb{Z}_2\cdot O_7(3)$, \ ${\rm Aut}(O_7(3))\}$.\\[0.1cm]
$\mathcal{C}_7$ & $\!\!\!\!=$ & $\!\!\!\! \{L_2(49):2_1, \
L_2(49):2_2, \
L_2(49):2_3\}$.\\[0.1cm]
$\mathcal{C}_8$ & $\!\!\!\!=$ & $\!\!\!\! \{L\cdot 2^2,  \
\mathbb{Z}_{2}\times (L: 2_1), \ \mathbb{Z}_2\times (L: 2_2), \
\mathbb{Z}_2\times (L: 2_3),  \ \mathbb{Z}_2\cdot(L: 2_1), $
\\[0.1cm]
& & $\mathbb{Z}_2\cdot(L: 2_2), \ \mathbb{Z}_2\cdot(L: 2_3), \
\mathbb{Z}_4\times L, \ (\mathbb{Z}_2\times \mathbb{Z}_2)\times
L\}$, \ where $L=L_2(49)$.\\[0.1cm]
$\mathcal{C}_{9}$ & $\!\!\!\!=$  & $ \!\!\!\! \{U_3(5):3, \
\mathbb{Z}_{3} \times U_3(5),
\ \mathbb{Z}_{3}\cdot U_3(5) \}$\\[0.1cm]
$\mathcal{C}_{10}$ & $\!\!\!\!=$  & $ \!\!\!\! \{L:\mathbb{S}_3, \
\mathbb{Z}_{2}\cdot(L:3), \ \mathbb{Z}_{3}\times (L:2), \
\mathbb{Z}_{3}\cdot(L:2), \ (\mathbb{Z}_{2}\times L): 2$, \
$(\mathbb{Z}_{3}\cdot L): 2\},$
\\[0.1cm]
& & where
$L=U_3(5)$.\\[0.1cm]
$\mathcal{C}_{11}$ & $\!\!\!\!=$  & $ \!\!\!\! \{U_6(2): 3, \
\mathbb{Z}_{3}\times U_6(2), \ \mathbb{Z}_{3}\cdot U_6(2)\}$.\\[0.1cm]
$\mathcal{C}_{12}$ & $\!\!\!\!=$  & $ \!\!\!\! \{L\cdot
\mathbb{S}_3, \ \mathbb{Z}_{3}\times (L:2), \ \mathbb{Z}_{3}\cdot
(L:2), \ (\mathbb{Z}_{3}\times L):2, \ (\mathbb{Z}_{3}\cdot
L):2\}$, where
$L=U_6(2)$.\\[0.1cm]
$\mathcal{C}_{13}$ & $\!\!\!\!=$  & $ \!\!\!\! \{{\rm
Aut}(O^+_{10}(2), \  {\rm Aut}(O^-_{10}(2)\}$,
\\[0.1cm]
$\mathcal{C}_{14}$ & $\!\!\!\!=$  & $ \!\!\!\! \{{\rm
Aut}(L_{p}(2)), \ {\rm Aut}(L_{p+1}(2))\}$, where $2^p-1$ is a
Mersenne prime.
\\[0.1cm]
\end{tabular}
\begin{center}
{\bf Table 5}. {\em The non-abelian simple groups $S\in
{\cal{S}}_{p}$ except alternating ones.}

 \vspace{.5cm}

\begin{tabular}{lll} \hline
$p$ & $S$ & $|S|$ \\[0.1cm]
\hline
79 & $L_3(23)$ & $2^5\cdot3\cdot7\cdot11^2\cdot23^3\cdot79$\\[0.1cm]
& $L_4(23)$ & $2^9\cdot3^2\cdot5\cdot7\cdot11^3\cdot23^6\cdot53\cdot79$\\[0.1cm]
& $O_7(23)$ & $2^{12}\cdot3^4\cdot5\cdot7\cdot11^3\cdot13^2\cdot23^9\cdot53\cdot79$ \\[0.1cm]
& $S_6(23)$ & $2^{12}\cdot3^4\cdot5\cdot7\cdot11^3\cdot13^2\cdot23^9\cdot53\cdot79$  \\[0.1cm]
&$O_8^+(23)$ &
$2^{16}\cdot3^5\cdot5^2\cdot7\cdot11^4\cdot13^2\cdot23^{12}\cdot53^2\cdot79$  \\[0.1cm]
&$G_2(23)$ & $2^8\cdot3^3\cdot7\cdot11^2\cdot13^2\cdot23^6\cdot79$\\[0.1cm]
&$L_3(23^2)$ & $2^9\cdot3^2\cdot5\cdot7\cdot11^2\cdot13^2\cdot23^6\cdot 53\cdot79$  \\[0.1cm]
&$L_2(23^3)$ & $2^3\cdot3^2\cdot7\cdot11\cdot13^2\cdot 23^3\cdot79$ \\[0.1cm]
&$L_2(79)$ & $2^4\cdot3\cdot5\cdot13\cdot79$ \\[0.1cm]
&$L_3(79)$ & $2^6\cdot3^2\cdot5\cdot7^2\cdot13^2\cdot43\cdot79^3$\\[0.1cm]
137 &
$L_2({37}^2)$ & $2^3\cdot3^2\cdot 5 \cdot {19}\cdot {37}^2\cdot 137$  \\[0.1cm]&
$S_4({37})$ & $2^6\cdot3^4\cdot 5 \cdot {19}^2\cdot {37}^4\cdot 137$  \\[0.1cm]&
$L_4({37})$ & $2^7\cdot3^7\cdot 5\cdot 7 \cdot {19}^2\cdot {37}^6\cdot 67\cdot 137$  \\[0.1cm]&
$U_4({37})$ & $2^7\cdot3^4\cdot 5\cdot {19}^3\cdot 31 \cdot {37}^6\cdot 43\cdot 137$  \\[0.1cm]&
$L_3({137})$ & $2^7\cdot3\cdot 7\cdot {17}^2\cdot 23 \cdot {37}\cdot 73\cdot {137}^3$  \\[0.1cm]&
$L_3({37}^2)$ & $2^7\cdot3^4\cdot 5\cdot 7\cdot {19}^2\cdot 31 \cdot {37}^6\cdot 43\cdot 67\cdot {137}$  \\[0.1cm]&
$S_6({37})$ & $2^9\cdot3^7\cdot 5\cdot 7\cdot {19}^3\cdot 31 \cdot {37}^9\cdot 43\cdot 67\cdot {137}$  \\[0.1cm]&
$O_7({37})$ & $2^9\cdot3^7\cdot 5\cdot 7\cdot {19}^3\cdot 31 \cdot {37}^9\cdot 43\cdot 67\cdot {137}$  \\[0.1cm]&
$D_4({37})$ & $2^{12}\cdot3^9\cdot 5^2\cdot 7\cdot {19}^4\cdot 31 \cdot {37}^{12}\cdot 43\cdot 67\cdot {137}^2$  \\[0.1cm]
151 & $L_3(32)$ & $2^{15}\cdot3\cdot 7\cdot 11 \cdot {31}^2\cdot 151$  \\[0.1cm]
&$L_4(32)$ & $2^{30}\cdot3^2\cdot 5^2\cdot 7\cdot {11}^2\cdot {31}^3\cdot 41\cdot 151$  \\[0.1cm]
&$L_5(8)$ & $2^{30}\cdot3^4\cdot 5\cdot 7^3\cdot {13}\cdot {31}\cdot 73\cdot 151$  \\[0.1cm]
&$L_6(8)$ & $2^{45}\cdot3^7\cdot 5\cdot 7^4\cdot {13}\cdot 19\cdot
{31}\cdot {73}^2\cdot 151$
\\[0.1cm]
 181 & $L_2({19}^2)$ & $2^{3}\cdot3^2\cdot 5 \cdot {19}^2\cdot {181}$  \\[0.1cm]
&$S_4(19)$ & $2^{6}\cdot3^4\cdot5^2\cdot {19}^4 \cdot 181$  \\[0.1cm]
&$U_3(49)$ & $2^{6}\cdot3 \cdot5^4\cdot 7^6\cdot {13} \cdot 181$  \\[0.1cm]
&$U_4(19)$ & $2^{7}\cdot3^4\cdot5^3\cdot {7}^3\cdot {19}^6 \cdot 181$\\[0.1cm]
&$L_4(19)$ & $2^{7}\cdot3^7\cdot5^2\cdot {19}^6\cdot {127} \cdot 181$  \\[0.1cm]
&$L_3({19}^2)$ & $2^{7}\cdot3^4\cdot5^2\cdot 7^3\cdot {19}^6\cdot {127} \cdot 181$  \\[0.1cm]
&$S_6(19)$ & $2^{9}\cdot3^7\cdot5^3\cdot 7^3\cdot {19}^9\cdot {127} \cdot 181$  \\[0.1cm]
&$O_7(19)$ & $2^{9}\cdot3^7\cdot5^3\cdot 7^3\cdot {19}^9\cdot {127} \cdot 181$  \\[0.1cm]
&$D_8(19)$ & $2^{12}\cdot3^9\cdot5^4\cdot 7^3\cdot {19}^{12}\cdot {127} \cdot {181}^2$  \\[0.1cm]
&$^3D_4(7)$ & $2^{8}\cdot3^4\cdot7^{12}\cdot 13\cdot {19}^2\cdot {43}^2 \cdot 181$  \\[0.1cm]
&$L_2(7^6)$ & $2^{4}\cdot3^2\cdot5^2\cdot 7^6\cdot {13}\cdot 19\cdot {43} \cdot 181$  \\[0.1cm]
&$S_4(7^3)$ & $2^{8}\cdot3^4\cdot5^2\cdot 7^{12}\cdot 13\cdot {19}^2\cdot {43}^2 \cdot 181$  \\[0.1cm]
&$G_2(49)$ & $2^{10}\cdot3^3\cdot5^4 \cdot 7^{12}\cdot 13\cdot 19\cdot 43\cdot 181$  \\[0.1cm]
&$L_3(181)$ & $2^{5}\cdot3^4\cdot5^2\cdot 7\cdot {13}\cdot
79\cdot {139} \cdot {181}^3$ \\[0.1cm]
313 & $L_2(5^4)$ & $2^{4}\cdot 3\cdot 5^4\cdot 13\cdot 313$  \\
\hline
\end{tabular}
\end{center}
\begin{center}
{\bf Table 5}. {\em The non-abelian simple groups $S\in
{\cal{S}}_{p}$ except alternating ones (Continued).}

\vspace{.5cm}

\begin{tabular}{lll} \hline
$p$ & $S$ & $|S|$ \\[0.1cm]
\hline
313 &$S_4(25)$ & $2^{8}\cdot 3^2\cdot 5^8\cdot {13}^2\cdot 313$  \\[0.1cm]
&$L_3(313)$ & $2^{7}\cdot 3^2\cdot {13}^2\cdot 157\cdot {181}^2\cdot {313}^3$  \\[0.1cm]
&$^2D_4(5)$ & $2^{10}\cdot 3^4\cdot 5^{12}\cdot 7\cdot 13\cdot 31\cdot {313}$  \\[0.1cm]
&$S_8(5)$ & $2^{14}\cdot 3^5\cdot 5^{16}\cdot 7\cdot {13}^2\cdot 31\cdot {313}$  \\[0.1cm]
&$L_2({313}^2)$ & $2^{4}\cdot 3\cdot 5\cdot {13}\cdot 97\cdot {101}\cdot 157\cdot {313}^2$  \\[0.1cm]
&$S_4(313)$ & $2^{8}\cdot 3^2\cdot 5\cdot {13}^2\cdot 97\cdot 101\cdot {157}^2\cdot {313}^4$  \\[0.1cm]
&$D_5(5)$ & $2^{15}\cdot 3^5\cdot 5^{20}\cdot 7\cdot {11}\cdot {13}^2\cdot 31\cdot 71\cdot {313}$  \\[0.1cm]
&$L_4(25)$ & $2^{10}\cdot 3^4\cdot 5^{12}\cdot 7\cdot {13}^2\cdot 31\cdot 313$  \\[0.1cm]
&$L_4(313)$ & $2^{10}\cdot 3^4\cdot 5\cdot {13}^3\cdot 97\cdot 101\cdot {157}^2\cdot {181}^2\cdot {313}^6$  \\[0.1cm]
&$^3D_4(29)$ & $2^{6}\cdot 3^4\cdot {5}^2\cdot 7^2\cdot {13}^2\cdot 29^{12}\cdot 37\cdot 61\cdot {67}^2\cdot {271}^2\cdot {313}$  \\[0.1cm]
331 & $L_3(31)$ & $2^{7}\cdot 3^2\cdot 5^2\cdot {31}^3\cdot 331$  \\[0.1cm]
& $U_3(32)$ & $2^{15}\cdot 3^2\cdot {11}^2\cdot {31}\cdot 331$  \\[0.1cm]
& $L_2({31}^3)$ & $2^{5}\cdot 3^2\cdot 5\cdot 7^2\cdot 19\cdot {31}^3\cdot 331$  \\[0.1cm]
& $U_4(32)$ & $2^{30}\cdot 3^4\cdot 5^2\cdot {11}^3\cdot {31}^2 \cdot 41\cdot 331$  \\[0.1cm]
& $L_4(31)$ & $2^{13}\cdot 3^4\cdot 5^3\cdot {13}\cdot {31}^6 \cdot 37\cdot 331$  \\[0.1cm]
& $L_2(2^{15})$ & $2^{15}\cdot 3^2\cdot 7\cdot 11 \cdot {31}\cdot 151\cdot 331$  \\[0.1cm]
& $G_2(32)$ & $2^{30}\cdot 3^3\cdot 7\cdot {11}^2 \cdot {31}^2\cdot 151\cdot 331$  \\[0.1cm]
& $U_5(8)$ & $2^{30}\cdot 3^9\cdot 5\cdot 7^2\cdot 11\cdot 13\cdot 19\cdot 331$  \\[0.1cm]
& $U_6(8)$ & $2^{45}\cdot 3^{11}\cdot 5\cdot 7^3\cdot 11\cdot 13\cdot {19}^2\cdot 73\cdot 331$  \\[0.1cm]
& $L_3(2^{10})$ & $2^{30}\cdot 3^2\cdot 5^2\cdot 7\cdot {11}^2\cdot {31}^2\cdot 41\cdot 151\cdot 331$  \\[0.1cm]
& $S_6(32)$ & $2^{45}\cdot 3^4\cdot 5^2\cdot 7\cdot {11}^3\cdot {31}^3\cdot 41\cdot 151\cdot 331$  \\[0.1cm]
& $D_4(32)$ & $2^{60}\cdot 3^5\cdot 5^4\cdot 7\cdot {11}^4\cdot {31}^4 \cdot 41^2\cdot 151\cdot 331$  \\[0.1cm]
& $L_3({31}^2)$ & $2^{13}\cdot 3^2\cdot 5^2\cdot 7^2\cdot 19\cdot  {31}\cdot {31}^6\cdot 37\cdot 331$  \\[0.1cm]
& $O_7(31)$ & $2^{18}\cdot 3^4\cdot 5^3\cdot 7^2\cdot {13}\cdot 19\cdot {31}^9\cdot 37\cdot 331$  \\[0.1cm]
& $S_6(31)$ & $2^{18}\cdot 3^4\cdot 5^3\cdot 7^2\cdot {13}\cdot 19\cdot {31}^9\cdot 37\cdot 331$  \\[0.1cm]
& $D_4(31)$ & $2^{24}\cdot 3^5\cdot 5^4\cdot 7^2\cdot {13}^2\cdot 19\cdot {31}^{12} \cdot {37}^2\cdot 331$  \\[0.1cm]
& $^2D_5(8)$ & $2^{60}\cdot 3^{11}\cdot 5^2\cdot 7^4\cdot {11}\cdot {13}^2\cdot 17 \cdot 19\cdot 73\cdot 241\cdot 331$  \\[0.1cm]
& $L_5(64)$ & $2^{60}\cdot 3^9\cdot 5^2\cdot 7^4\cdot {11}\cdot {13}^2 \cdot 17\cdot 19\cdot 31\cdot 73\cdot 151\cdot 241\cdot 331$  \\[0.1cm]
& $S_{10}(8)$ & $2^{75}\cdot 3^{11}\cdot 5^2\cdot 7^5\cdot
{11}\cdot {13}^2 \cdot 17\cdot 19\cdot 31\cdot 73\cdot 151\cdot
241\cdot 331$
\\[0.1cm]
& $D_6(8)$ & $2^{90}\cdot 3^{14}\cdot 5^2\cdot 7^6\cdot {11}\cdot {13}^2 \cdot 17\cdot {19}^2\cdot 31\cdot {73}^2\cdot 151\cdot 241\cdot 331$  \\[0.1cm]
& $^2D_6(8)$ & $2^{90}\cdot 3^{11}\cdot 5^3\cdot 7^5\cdot {11}\cdot {13}^3 \cdot 17\cdot {19}\cdot 31\cdot 37\cdot {73}\cdot 151\cdot 241\cdot 331$  \\[0.1cm]
& $L_6(64)$ & $2^{90}\cdot 3^{11}\cdot 5^3\cdot 7^5\cdot {11}\cdot
{13}^3 \cdot 17\cdot {19}^2\cdot 31\cdot 37\cdot {73}^2
\cdot 109\cdot 151\cdot 241\cdot 331$  \\[0.1cm]
& $S_{12}(8)$ & $2^{108}\cdot 3^{14}\cdot 5^3\cdot 7^6\cdot
{11}\cdot {13}^3 \cdot 17\cdot {19}^2\cdot 31\cdot 37\cdot {73}^2
\cdot 109\cdot 151\cdot 241\cdot 331$ \\[0.1cm]
& $E_8(2)$ &   $2^{120}\cdot 3^{12}\cdot 5^5\cdot 7^4\cdot
{11}^2\cdot {13}^2 \cdot {17}^2\cdot {19}\cdot {31}^2\cdot 41\cdot
43\cdot 127\cdot 151\cdot 241\cdot 331$  \\[0.1cm]
421 & $L_2({29}^2)$ & $2^{3}\cdot 3\cdot 5\cdot 7\cdot {29}^2\cdot 421$  \\[0.1cm]
& $S_4(29)$ & $2^{6}\cdot 3^2\cdot 5^2\cdot 7^2\cdot {29}^4\cdot 421$  \\[0.1cm]
& $U_4(29)$ & $2^7\cdot 3^4\cdot 5^3\cdot 7^2\cdot {29}^6\cdot 271\cdot 421$  \\[0.1cm]
& $U_3(401)$ & $2^{6}\cdot 3^2\cdot 5^2\cdot {67}^2\cdot 127\cdot {401}^3\cdot 421$  \\[0.1cm]
\hline
\end{tabular}
\end{center}
\begin{center}

{\bf Table 5}. {\em The non-abelian simple groups $S\in
{\cal{S}}_{p}$ except alternating ones (Continued).}

\vspace{.5cm}

\begin{tabular}{lll} \hline
$p$ & $S$ & $|S|$ \\[0.1cm]
\hline
421& $L_4(29)$ & $2^{7}\cdot 3^2\cdot 5^2\cdot {7}^3\cdot 13\cdot {29}^6\cdot 67\cdot 421$  \\[0.1cm]
& $L_2({421}^2)$ & $2^3\cdot 3\cdot 5\cdot 7\cdot 13\cdot 17\cdot 211\cdot 401\cdot {421}^2$  \\[0.1cm]
& $S_4(421)$ & $2^6\cdot 3^2\cdot 5^2\cdot 7^2\cdot 13\cdot 17\cdot {211}^2\cdot 401\cdot {421}^4$  \\[0.1cm]
& $L_3({29}^2)$ & $2^7\cdot 3^2\cdot 5^2\cdot 7^2\cdot 13\cdot {29}^6\cdot 67\cdot 271\cdot 421$  \\[0.1cm]
& $S_6(29)$ & $2^9\cdot 3^4\cdot 5^3\cdot 7^3\cdot 13\cdot {29}^9\cdot 67\cdot 271\cdot 421$  \\[0.1cm]
& $O_7(29)$ & $2^9\cdot 3^4\cdot 5^3\cdot 7^3\cdot 13\cdot {29}^9\cdot 67\cdot 271\cdot 421$  \\[0.1cm]
& $D_4(29)$ & $2^{12}\cdot 3^5\cdot 5^4\cdot 7^4\cdot 13\cdot {29}^{12}\cdot 67\cdot 271\cdot 421^2$  \\[0.1cm]
& $U_3({29}^2$ & $2^5\cdot 3\cdot 5\cdot 7\cdot {29}^6\cdot 37\cdot 61\cdot 313\cdot {421}^2$  \\[0.1cm]
& $U_5(29)$ & $2^9\cdot 3^5\cdot 5^4\cdot 7^2\cdot 11\cdot {29}^{10}\cdot 31\cdot 271\cdot 401\cdot 421$  \\[0.1cm]
& $U_4(401)$ & $2^{11}\cdot 3^4\cdot 5^4\cdot 37\cdot 41\cdot 53\cdot {67}^3\cdot 127\cdot {401}^6\cdot 421$  \\[0.1cm]
& $U_6(29)$ &  $2^{11}\cdot 3^6\cdot 5^5\cdot 7^3\cdot 11\cdot 13\cdot {29}^{15}\cdot 31\cdot 67\cdot {271}^2\cdot 401\cdot 421$ \\[0.1cm]
& $L_2({29}^6)$ & $2^3\cdot 3^2\cdot 5\cdot 7\cdot 13\cdot {29}^6\cdot 37\cdot 61\cdot 67\cdot 271\cdot 313\cdot 421$  \\[0.1cm]
& $S_4({29}^3)$ &  $2^6\cdot 3^4\cdot 5^2\cdot 7^2\cdot {13}^2\cdot {29}^{12}\cdot 37\cdot 61\cdot {67}^2\cdot {271}^2\cdot 313\cdot 421$ \\[0.1cm]
& $G_2({29}^2)$ &  $2^8\cdot 3^3\cdot 5^2\cdot 7^2\cdot 13\cdot {29}^{12}\cdot 37\cdot 61\cdot 67\cdot 271\cdot 313\cdot {421}^2$ \\[0.1cm]
757 &  $L_3(27)$ & $2^{4}\cdot 3^9\cdot 7\cdot {13}^2\cdot 757$  \\[0.1cm]
&  $L_4(27)$ & $2^{7}\cdot 3^{18}\cdot 5\cdot 7^2\cdot {13}^3\cdot 73\cdot 757$  \\[0.1cm]
&  $L_2(3^9)$ & $2^2\cdot 3^9\cdot 7\cdot 13\cdot 19\cdot 37\cdot 757$  \\[0.1cm]
&  $G_2(27)$ & $2^6\cdot 3^{18}\cdot 7^2\cdot {13}^2\cdot 19\cdot 37\cdot 757$  \\[0.1cm]
&  $L_2({757}^2)$ & $2^3\cdot 3^3\cdot 5^2\cdot 7\cdot 73\cdot 157\cdot 379\cdot {757}^2$  \\[0.1cm]
&  $S_4(757)$ & $2^6\cdot 3^6\cdot 5^2\cdot 7^2\cdot 73\cdot 157\cdot {379}^2\cdot 757^4$  \\[0.1cm]
&  $E_6(3)$ & $2^{17}\cdot 3^{36}\cdot 5^2\cdot 7^2\cdot {11}^2\cdot {13}^3\cdot 41\cdot 73\cdot 757$  \\[0.1cm]
&  $L_3(3^6)$ & $2^{7}\cdot 3^{18}\cdot 5\cdot 7^2\cdot {13}^2\cdot 19\cdot 37\cdot 73\cdot 757$  \\[0.1cm]
&  $S_6(27)$ & $2^9\cdot 3^{27}\cdot 5\cdot 7^3\cdot {13}^3\cdot 19\cdot 37\cdot 73\cdot 757$  \\[0.1cm]
&  $O_7(27)$ & $2^9\cdot 3^{27}\cdot 5\cdot 7^3\cdot {13}^3\cdot 19\cdot 37\cdot 73\cdot 757$  \\[0.1cm]
&  $D_4(27)$ & $2^{12}\cdot 3^{36}\cdot 5^2\cdot 7^4\cdot {13}^4\cdot 19\cdot 37\cdot {73}^2\cdot 757$  \\[0.1cm]
&  $U_6(27)$ & $2^{13}\cdot 3^{45}\cdot 5\cdot 7^5\cdot {13}^3\cdot {19}^2\cdot 31\cdot {37}^2\cdot 61\cdot {73}\cdot 271\cdot 757$  \\[0.1cm]
\hline
\end{tabular}
\end{center}
\begin{center}
{\large \bf Acknowledgement}
\end{center}
The second author would like to thank the RIFS for the financial
support.

\end{document}